\DeclarePairedDelimiter\floor{\lfloor}{\rfloor}
\newcommand{\ld}{\lambda}
\newcommand{\Ld}{\Lambda}
\newcommand{\al}{\alpha}
\newcommand{\veps}{\varepsilon}
\DeclareMathOperator{\Int}{Int}
\DeclareMathOperator{\loc}{loc}
\DeclareMathOperator{\dist}{dist}
\newcommand{\bA}{\boldsymbol{A}}
\DeclareMathOperator{\meas}{\mathrm{meas}}
\DeclareMathOperator{\lin}{\mathrm{lin}}
\DeclareMathOperator{\Nlin}{\mathrm{Nlin}}
\providecommand{\norm}[1]{\lVert#1\rVert}
\newcommand{\Norm}[1]{\left\lVert#1\right\rVert}
\providecommand{\inn}[1]{\langle#1\rangle}
\newcommand{\bC}{\mathbf{C}}
\newcommand{\bB}{\mathbf{B}}
\newcommand{\mc}{\mathcal}
\newcommand{\A}{\mc{A}}
\newcommand{\B}{\mc{B}}
\newcommand{\M}{\mc{M}}
\newcommand{\R}{\mathbb{R}}
\newcommand{\N}{\mathbb{N}}
\def\today{\ifcase\month\or
  January\or February\or March\or April\or May\or June\or
  July\or August\or September\or October\or November\or December\fi
  \space\number\day, \number\year}
\newtheorem{theorem}{Theorem}[section]
\newtheorem{lemma}[theorem]{Lemma}
\newtheorem{proposition}[theorem]{Proposition}
\newtheorem{corollary}[theorem]{Corollary}
\theoremstyle{definition}
\newtheorem{definition}[theorem]{Definition}
\theoremstyle{remark}
\newtheorem{remark}[theorem]{Remark}
\newtheorem{assum}{Assumption}
\newenvironment{assump}[2][]
  {\begin{assum}[#1]}
  {\end{assum}}
\newtheorem{property}{Property}
\begin{document}

\title{Stabilization and control of the nonlinear plate equation}
\author[]{Cristóbal Loyola}
\date{\today}
\subjclass[2000]{35B40, 35B41, 93B05, 93D20, 35B60, 35L75}
% 35B40 	Asymptotic behavior of solutions to PDEs 
% 35B41 	Attractors
% 93B05 	Controllability
% 93D20 	Asymptotic stability in control theory
% 35B60     Continuation and prolongation of solutions to PDEs
% 35L75 	Higher-order nonlinear hyperbolic equations
\keywords{damped plate equation, exponential stabilization, compact global attractor, internal control, unique continuation, semilinear plate equation}
\address{Sorbonne Université, Université Paris Cité, CNRS, Laboratoire Jacques-Louis Lions, LJLL, F-75005 Paris, France}
\email{cristobal.loyola@sorbonne-universite.fr}
\allowdisplaybreaks
\numberwithin{equation}{section}

\begin{abstract}
    In this article we prove semiglobal stabilization and exact controllability results for nonlinear plate equations with hinged boundary conditions and analytic nonlinearity. These results hold when the damping or control is localized in a region where observability for the linear Schrödinger equation is known to hold. At the core of these results lies a new unique continuation property for the nonlinear plate equation, which significantly relaxes the geometric conditions required for such property to hold. This property is obtained by combining recent results on propagation of analyticity in time and unique continuation for linear plate operators. More broadly, our approach exploits the linear observability of the plate equation to establish both stabilization and control results. First, we prove exponential decay of the nonlinear energy under a defocusing assumption on the nonlinearity. Second, under a weaker asymptotic assumption on the nonlinearity, we prove semiglobal exact control by analyzing control properties inside the compact attractor provided by the dynamics of the damped equation.
\end{abstract}

\maketitle
\setcounter{tocdepth}{1}
\tableofcontents

\section{Introduction} 

Let $(\M, g)$ be a smooth compact connected Riemannian manifold of dimension $d\leq 3$ with (possibly empty) smooth boundary $\partial\M$. In this article we consider the following nonlinear plate equation
\begin{align}\label{eq:nlp}
    \left\{\begin{array}{rl}
        \partial_t^2 u+\Delta_g^2 u+\beta u+f(x, u)=0   &\ (t, x)\in [0, T]\times \Int(\M),\\
        u_{|_{\partial\M}}=\Delta u_{|_{\partial\M}}=0      &\ (t, x)\in [0, T]\times\partial\M,\\
        (u, \partial_t u)(0)=(u_0, u_1) &\ x\in \M,
    \end{array}\right.
\end{align}
where $(u_0, u_1)\in \big(H^2(\M)\cap H_0^1(\M)\big)\cap L^2(\M)$ and equipped with hinged boundary conditions whenever $\partial\M\neq\emptyset$. Here the nonlinearity $f:\M\times \R\to \R$ is assumed to be smooth in the first variable and analytic in the second one. Furthermore, depending on the situation we consider, it will satisfy assumptions that constrain the dynamics of \eqref{eq:nlp}. In the model, the constant $\beta\geq 0$ is so that we have the Poincaré-like inequality, we thus may require $\beta>0$ when $\partial\M=\emptyset$.

Models of the form \eqref{eq:nlp} are Kirchhoff–Love thin-plate equations with a local nonlinearity, describing the transverse deflection of a thin plate and capturing material or contact effects. In this article, we study internal stabilization and internal control for \eqref{eq:nlp} when the system is acted upon by either a localized damping term (interpreted as local frictional damping) or a localized control input (interpreted as a distributed transverse load). We refer to \cite{LL88, LL89} for further details on the phenomenology of such systems, as well as for early results on control and stabilization. The validity of our results holds whenever the damping or control is supported in a nonempty open set $\omega\subset\M$ satisfyinh the following assumption:

\begin{assump}{1}\label{assumOBS}
    The Schr\"odinger equation is observable in $L^2$ from $\omega$ in time $T>0$: there exists $C=C(T,\omega)>0$ such that for any $v_0\in L^2(\M)$ it holds
    \begin{align*}
        \norm{v_0}_{L^2(\M)}^2\leq C\int_0^T \norm{\mathbbm{1}_\omega e^{it\Delta_g}v_0}_{L^2(\M)}^2dt.
    \end{align*}
\end{assump}

We point out right away that the sharp geometric condition on $\omega$ necessary for the observability of the Schr\"odinger equation remains an open question. Yet, it has been the object of many investigations. In the following situations, \cref{assumOBS} is known to be true for any $T>0$:
\begin{enumerate}
    \item $(\M, g)$ is a compact Riemannian manifold with or without boundary and $\omega$ satisfies the \emph{Geometric Control Condition}. See Lebeau \cite{Leb92}.
    \item $(\M,g)=((0,1)^d,\text{Euclid})$ with $d\leq 3$, $\omega$ is any nonempty open set. This was first proved by Jaffard for $d=2$ \cite{Jaf90} and Komornik \cite{K:92} for other dimensions (actually directly for the beam equation). Other proofs have also been given later by Burq-Zworski \cite{BZ:04},  Anantharaman-Maci\`a \cite{AM:14}. Note here that the proofs are given for the torus $\mathbb{T}^d$, but an easy argument of symmetrization allows to recover the same result for the Dirichlet boundary condition.
    \item $(\M, g)=(\mathbb{D}, \text{Euclid})$ is the Euclidean closed disk in $\R^2$ and $\omega\cap \partial \M\neq\emptyset$. See Anantharaman, L\'eautaud and Maci\`a \cite[Theorem 1.2]{ALM16},
    \item $(\M, g)$ is the Bunimovich stadium and $\omega$ controls geometrically $\M\setminus R$, $R$ being the rectangular part, see  Burq-Zworski \cite[Theorem 9]{BZ:04}.
    \item $(\M, g)$ is a compact, boundaryless connected Riemannian surface whose flow has the Anosov property, $\omega$ is any nonempty open set. See Dyatlov-Jin-Nonnenmacher \cite[Theorem 5]{DJN:22}.
    \item $(\M, g)$ is a compact, boundaryless Riemannian manifold of dimension $d$ and constant curvature $\equiv -1$, and the observation $\mc{C}\psi= a\psi$ is made through a smooth function $a$ on $M$ such that the set
    $\{\rho\in S^*\M\ |\ a^2(\Phi_t(\rho))=0,\ \forall t\in \R\}$
    has Hausdorff dimension $<d$. Here $\Phi_t$ is the bicharacteristic flow on $T^*\M$. See Anantharaman-Rivi\`ere \cite[Theorem 2.5]{AR:12}.
\end{enumerate}
\begin{remark}
Observability of linear Schrödinger equations in unbounded domains has also been object of several studies, we refer to Prouff \cite{Pro25} and the references therein.
\end{remark}

Compared to the current literature on stabilization and control of nonlinear plates like \eqref{eq:nlp}, our results widely improve the geometric conditions for which such results hold true. In this direction, we point out that most available results assume some sort of multiplier-type condition on the damping or control zone, which is known to be stronger than, for instance, the Geometric Control Condition \cite{Mil03}. Let us explain the origin of our assumption and the difficulties previously encountered in the literature.

To fix ideas, consider the stabilization problem with a defocusing nonlinearity. It is well known that the decay of the energy of the damped system (see \eqref{eq:nlp-stab} below) is equivalent to proving an observability inequality for the nonlinear equation. In our situation, this compactness-uniqueness strategy can be roughly summarized as follows:
\begin{center}
 \emph{linear observability} + \emph{unique continuation} $\Longrightarrow$ \emph{nonlinear observability}. 
\end{center}
Although not explicitly, this scheme was introduced by Dehman, Lebeau and Zuazua \cite{DLZ03} to stabilize the semilinear defocusing wave equation in $\R^3$ with damping active outside a ball. Furthermore, this strategy has been successfully employed for both the wave and Schrödinger equations in different geometric settings \cite{DGL06, Lau10:3d, JL13, JL20, Per23}. When it comes to our case, as it has already been noticed by several authors, the plate operator with hinged boundary conditions can be decomposed into two Schrödinger-type operators $\partial_t^2+\Delta_g^2=(i\partial_t-\Delta_g)(i\partial_t+\Delta_g)$, showing that both operators might share similar properties. In this direction, the link between the linear observability of the Schrödinger and the plate equation is now rather well-understood after Lebeau \cite{Leb92}, who introduced a rather general strategy to transfer observability inequalities from the Schrödinger to the plate equation, see \cite[Section 4]{LL24}. See also Miller \cite{Mil12} for the connection between the observability of linear waves and Schrödinger-like equations. Here, as done under the GCC in the context of the wave and Schrödinger equations, we perform the propagation arguments by exploiting the linear observability inequality along with the compactness properties of the nonlinearity, the latter being inherited from the well-posedness framework. When it comes to unique continuation for the nonlinear plate equation, the situation is far less understood and most of the available results in the literature hold under rather restrictive geometric conditions, as they are derived from Carleman-estimates-based strategies. In this article we prove unique continuation for the nonlinear plate equation by combining recent results on unique continuation for linear plate operators by Filippas, Laurent, and Léautaud \cite{FLL24} and propagation of analyticity in time from Laurent and the author \cite{LL24}. We can roughly state that this strategy can be carried out whenever observability for the linear equation holds and the nonlinearity is analytic and enjoys some compactness property, hence explaining \cref{assumOBS}. In this spirit, new observability results for the Schrödinger equation in bounded domains, will most likely automatically widen the validity of our results.

Regarding semiglobal control, by time reversibility of the equation, it is known that it can be deduced by combining stabilization of the damped equation along with nonlinear local control around $0$. If we instead relax the defocusing assumption to an asymptotic defocusing condition, the long-time dynamics of the equation become more complicated and the previous strategy does not work as the damped plate may have nontrivial equilibria. However, following Joly and Laurent \cite{JL14}, we can use the unique continuation property to prove that the damped plate generates a gradient system and admits a compact global attractor. From this fact, a control strategy can be implemented to prove semiglobal control by studying the controllability properties inside this compact attractor.

\subsection{Main results} We describe our main results below. First, we state the global stabilization result whenever a localized damping term $+\gamma^2(x)\partial_t u$ is added to \eqref{eq:nlp} and the nonlinearity is assumed to be analytic and defocusing. When such a defocusing assumption is relaxed by a similar asymptotic condition, we can prove the semiglobal control of the system by studying the attractor of the damped plate combined with a local control result around equilibria. Finally, we describe the unique continuation result for \eqref{eq:nlp} whenever the solution is observed to be zero in a zone from where \cref{assumOBS} holds. 

\subsubsection{Semiglobal stabilization}\label{NLP:s:introstab} Let us consider the following nonlinear plate with localized damping
\begin{align}\label{eq:nlp-stab}
    \left\{\begin{array}{rl}
        \partial_t^2 u+\Delta_g^2 u+\beta u+\gamma^2(x)\partial_t u+f(x, u)=0  &\ (t, x)\in [0, +\infty)\times \Int(\M),\\
        u_{|_{\partial\M}}=\Delta u_{|_{\partial\M}}=0            &\ (t, x)\in [0, +\infty)\times\partial\M,\\
        (u, \partial_t u)(0)=(u_0, u_1) &\ x\in \M,
    \end{array}\right.
\end{align}
where $\gamma\in C^\infty(\M)$ satisfies $\gamma(x)\geq \gamma_0>0$ for $x\in\omega$. Here we assume that $f(\cdot, 0)=0$ and that it satisfies the defocusing assumption
\begin{align}\label{NLP:defdefocusing}
\begin{array}{lc}
        sf(x, s)\geq 0 &  \textnormal{ if }\partial \M\neq \emptyset,\\
        sf(x, s)\geq \alpha s^2 & \textnormal{ if }\partial \M= \emptyset,
\end{array}
\end{align}
for some $\alpha>0$ and for all $(x, s)\in \M\times \R$.

The nonlinear energy $E\in C^0(H_D^2\times L^2(\M), \R^+)$ associated to \eqref{eq:nlp-stab} is given by
\begin{align*}
    E(u,\partial_t u):=\dfrac{1}{2}\int_\M \big(|\partial_t u|^2+|\Delta u|^2+\beta |u|^2\big)dx+\int_\M V(x, u)dx,
\end{align*}
where $V(x, u)=\int_0^u f(x, s)ds$. Since $d\leq 3$, due to Sobolev embedding the energy $E$ is well-defined. Moreover, if $u$ solves \eqref{eq:nlp-stab}, formally we have the identity
\begin{align}\label{eq:energyidentity}
    \partial_t E(u,\partial_t u)=-\int_\M \gamma^2(x)|\partial_t u|^2 dx\leq 0,
\end{align}
which tell us that the energy $E$ is non-increasing along the trajectories. We are interested in the following property:
\begin{center}
    \textbf{\textbf{(ED)}} for any $E_0\geq 0$, there exist $C>0$ and $\ld>0$ such that, for any solution $u$ of \eqref{eq:nlp-stab} with $E(U(0))\leq E_0$, it holds $E(U(t))\leq Ce^{-\ld t}E(U(0))$.
\end{center}
This property means that the damping term $+\gamma\partial_t u$ stabilizes any solution of \eqref{eq:nlp-stab} to $0$ at an exponential rate which depends on the size of the data. Our first main theorem is the following.

\begin{theorem}\label{thm:NLP-decay}
    Assume that $\omega$ satisfy \cref{assumOBS}. If $f$ is real analytic in the second variable and it satifies \eqref{NLP:defdefocusing}, then the exponential decay property \textbf{(ED)} holds.
\end{theorem}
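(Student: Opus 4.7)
The plan is to reduce the semi-global exponential stabilization \textbf{(ED)} to a nonlinear observability inequality
\begin{equation*}
    E(U(0))\leq C(E_0)\int_0^T\int_\M \gamma^2(x)|\partial_t u|^2\,dx\,dt
\end{equation*}
valid for every solution of \eqref{eq:nlp-stab} with $E(U(0))\leq E_0$, at some time $T$ large enough for \cref{assumOBS}. Combined with the dissipation identity \eqref{eq:energyidentity}, this gives $E(U(T))\leq (1-C(E_0)^{-1})E(U(0))$; iterating and exploiting the monotonicity of $t\mapsto E(U(t))$ then produces the exponential decay at a rate $\ld=\ld(E_0)>0$.

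I would prove the observability inequality by a compactness--uniqueness argument. Suppose it fails: pick $u_n$ with $E(U_n(0))\leq E_0$ and $\int_0^T\|\gamma\partial_t u_n\|_{L^2}^2\,dt\leq\veps_n E(U_n(0))$, $\veps_n\to 0$. Extract $\alpha_n^2:=E(U_n(0))\to\alpha^2\in[0,E_0]$. Split into two regimes: keep $u_n$ when $\alpha>0$, and renormalize $w_n:=u_n/\alpha_n$ when $\alpha=0$, in which case $f(x,\alpha_n w_n)/\alpha_n\to\partial_s f(x,0)\,w$ by the smoothness of $f(x,\cdot)$ at $0$. The a priori energy bound, Aubin--Lions compactness, and the compactness of the Nemytskii map $u\mapsto f(\cdot,u):H_D^2\to L^2$ (available because $d\leq 3$ yields the compact embedding $H^2\hookrightarrow L^\infty$) let me pass to the limit: the weak limit $u$ (resp.\ $w$) solves \eqref{eq:nlp-stab} (resp.\ a linear damped plate with potential $V(x)=\partial_s f(x,0)$) and satisfies $\partial_t u\equiv 0$ (resp.\ $\partial_t w\equiv 0$) on $[0,T]\times\omega$.

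Differentiating the equation in time, $v:=\partial_t u$ (resp.\ $\partial_t w$) solves a linear plate equation with smooth time-dependent potential and vanishes on $[0,T]\times\omega$. The unique continuation principle for such operators available under \cref{assumOBS}, obtained by combining the linear plate UCP of \cite{FLL24} with the propagation of analyticity in time of \cite{LL24}, forces $v\equiv 0$. Hence the limit is stationary. If $\alpha>0$, it solves $\Delta_g^2 u+\beta u+f(\cdot,u)=0$ with hinged boundary; testing against $u$ and integrating by parts, the defocusing condition \eqref{NLP:defdefocusing} (together with $\beta>0$ when $\partial\M=\emptyset$) yields $u\equiv 0$. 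If $\alpha=0$, the linear observability of the plate with potential $V$---which follows from \cref{assumOBS} plus the same linear UCP via a Fredholm argument---gives $w\equiv 0$. To close the argument, I plug $f(\cdot,u_n)$ as a source term into the linear plate observability inequality; since $u_n\to 0$ strongly in $L^\infty$ by the compactness above, this source tends to $0$ in $L^2([0,T]\times\M)$, upgrading the weak convergence to strong convergence in the energy norm and yielding $E(U_n(0))\to 0$ (resp.\ $E(W_n(0))\to 0$), contradicting $\alpha>0$ (resp.\ $E(W_n(0))\to 1$).

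The main obstacle is the unique continuation step: Carleman-estimate approaches demand multiplier-type geometric hypotheses on $\omega$ strictly stronger than \cref{assumOBS}. The way around is the non-Carleman unique continuation for linear plate operators with potentials available under \cref{assumOBS}, obtained by feeding the propagation-of-analyticity-in-time theorem of \cite{LL24} into the linear plate UCP of \cite{FLL24}. Once this ingredient is in hand, the rest of the scheme follows the classical compactness--uniqueness template of \cite{DLZ03, DGL06, Lau10:3d}.
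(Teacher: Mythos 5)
Your proposal is correct and takes essentially the same route as the paper: reduction of \textbf{(ED)} to a nonlinear observability inequality (\cref{lem:ED-equiv}), a compactness--uniqueness contradiction argument based on Aubin--Lions and the compactness of the Nemytskii map (\cref{prop:regnonlinearity}), the unique continuation property obtained by feeding the propagation-of-analyticity-in-time result of \cite{LL24} into the linear plate UCP of \cite{FLL24}, and a linearization step closed by the linear observability with potential $\partial_s f(\cdot,0)$ (\cref{cor:obs-plate}). The only difference is organizational: you split into the regimes $\alpha>0$ and $\alpha=0$, whereas the paper first shows $\norm{U_0^n}_X\to 0$ unconditionally via the nonlinear UCP and linear observability, and only then renormalizes --- the mathematical content is the same.
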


The previous result follows by proving an appropriate observability inequality for the nonlinear energy of \eqref{eq:nlp-stab}, see \cref{lem:ED-equiv} below. As we already pointed out, we obtain the desired observability by a compactness-uniqueness method, where we exploit the linear observability of the plate equation and the unique continuation property for nonlinear plates. With this strategy, the constant $C$ and the decay rate $\ld$ of the nonlinear energy both depend on the size of the initial data $R_0$. However, it might be possible to get the decay rate to be uniform with respect to the size of the data: we know that solutions are small for large time, therefore the nonlinear term could be neglected yielding almost the same decay rate as in the linear equation. See for instance \cite{LRZ10} for the case of the KdV equation. The possibility of dropping the dependence of $C$ with respect to $R_0$ is an open problem. We refer to Le Balc'h and Martin \cite{LBM23:stab-NLS}, where uniform global stabilization for the subcritical Schrödinger equation on the torus was obtained by a direct method using nonlinear Carleman estimates and Morawetz-type estimates.

\subsubsection{Dynamics and controllability} Let us now introduce the following controlled system
\begin{align}\label{eq:nlp-control}
    \left\{\begin{array}{rl}
        \partial_t^2 u+\Delta_g^2 u+\beta u+f(x, u)=\mathbbm{1}_{\widetilde{\omega}} g  &\ (t, x)\in [0, T]\times \Int(\M),\\
        u_{|_{\partial\M}}=\Delta u_{|_{\partial\M}}=0            &\ (t, x)\in [0, T]\times\partial\M,\\ %\text{ if } \partial\M\neq \emptyset,\\
        % \partial_t u=0 &\ (t, x)\in [0, T]\times \omega\\
        (u, \partial_t u)(0)=(u_0, u_1) &\ x\in \M,
    \end{array}\right.
\end{align}
with $g\in L^\infty([0, T], L^2(\M))$ and $\widetilde{\omega}\subset\M$ some nonempty open set. In this section, we relax \eqref{NLP:defdefocusing} by an asymptotic assumption on the nonlinearity, that is, we assume that there exists $R>0$ such that%\todo{not necessary for UCP?}
\begin{align}\label{cond:asymptf}
    f(x, 0)=0 \text{ for every }x\in\partial\M\ \text{ and }\ sf(x, s)\geq 0\ \text{ for every }\ (x, s)\in\M\times\{|s|\geq R\}.
\end{align} 
The controllability result is the following.

\begin{theorem}\label{thm:NLP-control}
    Let $f$ be analytic in the second variable and satisfy condition \eqref{cond:asymptf}. Let $\omega\Subset\widetilde{\omega}$ be two nonempty open sets such that $\omega$ satisfies \cref{assumOBS} in time $T_0>0$. For any bounded set $\mc{B}\subset H_D^2\times L^2(\M)$ there exists $T(\mc{B})>0$ such that for any $(u_0, u_1)$ and $(\widetilde{u}_0, \widetilde{u}_1)$ in $\mc{B}$, there exist $T\in (0, T_{\mc{B}})$ and a control $g\in L^\infty([0, T], L^2(\M))$ supported in $(0, T)\times \widetilde{\omega}$ such that the unique solution $(u, \partial_t u)\in C^0([0, T], H_D^2\times L^2(\M))$ of \eqref{eq:nlp-control} satisfies $(u(T), \partial_t u(T))=(\widetilde{u}_0, \widetilde{u}_1)$.
\end{theorem}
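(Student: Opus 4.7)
My plan is to follow the Joly--Laurent framework \cite{JL14}: build a compact global attractor $\mc{A}$ for the damped system \eqref{eq:nlp-stab}, use the nonlinear unique continuation property stated in the introduction to upgrade it into a gradient system whose attractor consists of equilibria and their connecting orbits, obtain local exact controllability of \eqref{eq:nlp-control} near each equilibrium, and finally assemble the global control by (i) damping both endpoints toward $\mc{A}$, (ii) chaining local controls between equilibria, and (iii) using time reversibility of \eqref{eq:nlp-control} to land on the non-equilibrium target $(\widetilde u_0,\widetilde u_1)$.

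\emph{Step 1: attractor and gradient structure.} Under \eqref{cond:asymptf}, the primitive $V(x,s)=\int_0^s f(x,\sigma)\,d\sigma$ is bounded below and the nonlinear energy $E$ is coercive modulo a constant, providing an absorbing ball in $H_D^2\times L^2(\M)$. Asymptotic compactness of the damped flow follows from the standard splitting into a linearly stabilizing piece (the linear damped plate is exponentially stable by the transfer of \cref{assumOBS} to the plate, due to Lebeau) plus a nonlinearly compactly perturbative piece (using $H^2\hookrightarrow L^\infty$ compactly in dimension $d\le 3$). This gives a compact global attractor $\mc{A}$. By the energy identity \eqref{eq:energyidentity}, $E$ is nonincreasing along trajectories and is constant only when $\gamma\partial_t u\equiv 0$, i.e.\ $\partial_t u\equiv 0$ on $(0,T_0)\times\omega$; the nonlinear UCP (Theorem~1.3 of the paper, based on \cite{FLL24,LL24}) then forces $\partial_t u\equiv 0$ everywhere, hence $u$ is an equilibrium. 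So \eqref{eq:nlp-stab} is a gradient system, $\omega$-limit sets are singletons, and $\mc{A}$ is the union of unstable manifolds of the equilibrium set $\mc{E}=\{e\in H_D^2\cap H_0^1:\Delta_g^2 e+\beta e+f(\cdot,e)=0\}$. In particular every trajectory in $\mc{A}$ converges as $t\to\pm\infty$ to some element of $\mc{E}$.

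\emph{Step 2: local controllability near equilibria.} Linearizing \eqref{eq:nlp-control} at $(e,0)$ gives $\partial_t^2 v+\Delta_g^2 v+\beta v+\partial_s f(x,e)v=\mathbbm{1}_{\widetilde\omega}g$, a plate equation with bounded potential. Using the Schr\"odinger-to-plate transfer and a Fredholm/UCP argument based again on \cite{FLL24}, \cref{assumOBS} yields observability of this perturbed plate in time $T_0$; a HUM construction together with a Schauder fixed-point argument then provides, for every $e\in\mc{E}\cap\mc{A}$, a radius $\eta_e>0$ such that any two data within $\eta_e$ of $(e,0)$ in $H_D^2\times L^2(\M)$ can be joined in time $T_0$ by a control in $L^\infty([0,T_0];L^2(\M))$ supported in $\widetilde\omega$. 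Continuity of $e\mapsto\eta_e$ and compactness of $\mc{E}\cap\mc{A}$ give a uniform lower bound $\eta_*>0$.

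\emph{Step 3: assembly and time reversal.} Given $(u_0,u_1)\in\mc{B}$, the attraction property yields $T_1=T_1(\mc{B})$ so that the uncontrolled damped flow of \eqref{eq:nlp-stab} brings it within $\eta_*/4$ of $\mc{A}$, and by convergence to equilibria within $\eta_*$ of some $e_1\in\mc{E}$; viewed inside \eqref{eq:nlp-control} this damping corresponds to the control $g=-\gamma^2\partial_t u\,\mathbbm{1}_{\widetilde\omega}$ (using $\omega\Subset\widetilde\omega$ and a cutoff), which lies in $L^\infty_tL^2_x$. Step~2 then steers onto $(e_1,0)$. Cover $\mc{A}$ by finitely many $\eta_e$-balls centered at equilibria; this produces a chain $e_1,\dots,e_N\in\mc{E}$ whose consecutive distances are less than $\min_j\eta_{e_j}$, hence $(N-1)$ applications of Step~2 concatenate into a control from $(e_1,0)$ to $(e_N,0)$. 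Apply the full forward scheme (damping $+$ local control) to the auxiliary datum $(\widetilde u_0,-\widetilde u_1)$ to produce, for some $e_N\in\mc{E}$ (which we may assume is the endpoint of the above chain), a control driving $(\widetilde u_0,-\widetilde u_1)$ to $(e_N,0)$; by time reversibility of \eqref{eq:nlp-control} ($v(t):=u(T-t)$ satisfies the same equation with reversed control and interchanges endpoints while flipping velocities), this gives a control from $(e_N,0)$ to $(\widetilde u_0,\widetilde u_1)$. Concatenating the three pieces yields the global control in total time bounded by some $T(\mc{B})$ depending only on $\mc{B}$.

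\emph{Main obstacle.} The heart of the argument is Step~1: the asymptotic compactness of \eqref{eq:nlp-stab} under the weak, partial damping active only on $\omega$, and above all the gradient-system upgrade, which is possible exactly because the nonlinear UCP of the paper holds under no geometric hypothesis beyond \cref{assumOBS}. Step~2 is then a rather standard perturbative application of the linear plate observability, and Step~3 is combinatorial once the gradient structure is in hand.
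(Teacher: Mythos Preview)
Your outline for Steps~1 and~2 is essentially the content of \cref{thm:attractorA} and \cref{thm:NLlocalcontrol} as proved in the paper, and is fine at the level of a sketch (modulo minor points: $\omega$-limit sets need not be singletons for a gradient system without isolated equilibria, and the local control time is $2T$ with $T>T_0$, not $T_0$).

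The genuine gap is in Step~3. Your chaining argument ``cover $\A$ by finitely many $\eta_e$-balls centered at equilibria; this produces a chain $e_1,\dots,e_N\in\mc E$ whose consecutive distances are less than $\min_j\eta_{e_j}$'' is not justified and is in general false. The attractor $\A$ consists of equilibria \emph{and} their connecting orbits; there is no reason the balls $\mathbb B_{\eta_e}(e,0)$ should cover $\A$, and even if they did, compactness of $\mc E$ does not yield a chain of equilibria with small consecutive gaps: $\mc E$ may perfectly well be a finite set of mutually distant points (think of a double-well potential), in which case no two local-control balls overlap and you cannot hop from one equilibrium to another by local control alone. So the assembly you describe does not produce a control connecting two arbitrary equilibria.

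The paper closes this gap by the \emph{double U-turn argument} (\cref{prop:uturn}): inside $\A$, any global orbit $U(t)$ of the damped equation has $\alpha$- and $\omega$-limits in $\mc E$, so one can run the damped flow forward to a ball around some $(e_+,0)$, use local control there to flip the velocity sign, run the damped flow \emph{backward along the same orbit} to a ball around $(e_-,0)$, flip again, and thereby connect $U(t_0)\leadsto U(t_1)$ for any $t_0,t_1$. This, combined with the elementary facts that the reachable set of an equilibrium is open in $X$ (\cref{lem:reachopen}) and closed in $\A$ (\cref{lem:reachclosed}), together with connectedness of $\A$, shows that from any equilibrium one can reach all of $\A$. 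The global control then proceeds exactly as you wrote (damp forward, travel through $\A$, time-reverse the target), but the travel through $\A$ rides the heteroclinic orbits rather than attempting direct hops between equilibria.
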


A well-known strategy to obtain semiglobal control in the defocusing case with $f(\cdot, 0)=0$, as before, is based on the time reversibility of the equation. Indeed, it combines the decay of the energy with a local control to achieve the target $0$, which in turn allows us to reach any target by time reversibility. Note, however, that the asymptotic condition \eqref{cond:asymptf} is weaker than the defocusing condition \eqref{NLP:defdefocusing}, and thus the long-time dynamics of the equation is more complicated, allowing the existence of nontrivial equilibria. Following \cite{JL14}, it is possible to implement a control strategy in the presence of more involved dynamics, as a consequence of \eqref{cond:asymptf}. In this direction, the previous control result relies on the existence and description of a global attractor for the damped system \eqref{eq:nlp-stab} and on a local exact control property around an equilibrium.

\begin{theorem}\label{thm:attractorA}
    Let $f$ be analytic in the second variable and satisfy condition \eqref{cond:asymptf}. If $\omega$ satisfies \cref{assumOBS} then the dynamical system generated by the damped plate equation \eqref{eq:nlp-stab} in $H_D^2\times L^2(\M)$ is gradient and admits a compact global attractor $\A$.
\end{theorem}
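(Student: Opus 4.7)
The plan is to adapt the strategy developed by Joly and Laurent \cite{JL14} for semilinear waves to the plate setting. Denote by $S(t)$ the nonlinear semigroup generated by \eqref{eq:nlp-stab} on $H_D^2\times L^2(\M)$; local well-posedness is standard since $d\leq 3$ gives $H^2(\M)\hookrightarrow L^\infty(\M)$, so that $u\mapsto f(\cdot,u)$ is locally Lipschitz on the energy space. Under \eqref{cond:asymptf}, the potential $V(x,u)=\int_0^u f(x,s)\,ds$ is uniformly bounded from below on $\M\times\R$ (it is non-decreasing on $[R,+\infty)$ and non-increasing on $(-\infty,-R]$), so the energy identity \eqref{eq:energyidentity} yields global existence together with a uniform-in-time bound of $\|(u(t),\partial_t u(t))\|_{H_D^2\times L^2}$ depending only on $E(u_0,u_1)$. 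In particular, bounded sets have bounded forward orbits.

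Next, I would show that $E$ is a strict Lyapunov function for $S(t)$: if $t\mapsto E(S(t)U_0)$ is constant, then $U_0$ is an equilibrium. Constancy of the energy forces $\gamma\,\partial_t u\equiv 0$, so $\partial_t u\equiv 0$ on $[0,T]\times\omega$ for every $T>0$. Differentiating \eqref{eq:nlp-stab} in time, $v:=\partial_t u$ solves the linear plate equation
\begin{equation*}
\partial_t^2 v+\Delta_g^2 v+\beta v+\gamma^2(x)\,\partial_t v+(\partial_s f)(x,u)\,v=0,
\end{equation*}
with $v\equiv 0$ on $[0,T]\times\omega$. Since $u\in C([0,T],H^2)\hookrightarrow C([0,T],L^\infty)$, the potential $(\partial_s f)(x,u)$ is bounded, and the unique continuation tools at the heart of this paper---the linear plate result of Filippas, Laurent and L\'eautaud \cite{FLL24} combined with the propagation-of-analyticity-in-time theorem of \cite{LL24}---force $v\equiv 0$. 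Hence $u$ is time-independent and $U_0$ is an equilibrium.

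For asymptotic compactness, I would split any solution as $u=u_L+u_{NL}$, with $u_L$ solving the linear damped plate with data $(u_0,u_1)$ and $u_{NL}$ solving the linear damped plate with zero data and source $-f(\cdot,u)$. Under \cref{assumOBS}, the linear damped plate is exponentially stable in $H_D^2\times L^2$ (via Lebeau's transfer from the Schr\"odinger case recalled in the introduction), so $u_L$ decays exponentially. Along any bounded orbit, $u(t)$ is bounded in $H^2$, and since $d\leq 3$ makes $H^2(\M)$ an algebra and $f$ is smooth, the source $-f(\cdot,u(t))$ stays bounded in $H^2$; standard regularity for the damped plate then puts $u_{NL}$ in a bounded subset of $H_D^4\times H_D^2$, which embeds compactly into $H_D^2\times L^2$. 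Combined with the Lyapunov structure, this yields asymptotic compactness of $S(t)$ on bounded sets. Finally, multiplying the stationary equation $\Delta_g^2\phi+\beta\phi+f(x,\phi)=0$ by $\phi$ and integrating by parts gives $\|\Delta\phi\|_{L^2}^2+\beta\|\phi\|_{L^2}^2\leq C$ uniformly, after discarding the nonnegative contribution from $\{|\phi|\geq R\}$ thanks to \eqref{cond:asymptf}, so the equilibrium set is bounded in $H_D^2$. A classical attractor-existence theorem for gradient systems with bounded orbits, bounded equilibrium set and asymptotic compactness then produces $\A$.

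The main obstacle is the gradient step: without multiplier or GCC assumptions, ruling out nontrivial trajectories of constant energy really requires the new unique continuation property, which is itself the central technical input of the paper. Everything else is a classical repackaging once this ingredient is in hand.
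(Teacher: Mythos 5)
Your proposal follows essentially the same route as the paper: global bounds from the lower bound on $V$, the energy as a strict Lyapunov functional with the gradient property reduced to the new unique continuation theorem, asymptotic compactness from the exponential decay of the linear damped semigroup (\cref{prop:lineardecay}) combined with the smoothing/compactness of $U\mapsto F(U)$ into $X^\sigma$ (\cref{prop:regnonlinearity}), boundedness of $\mc{E}$ by multiplying the stationary equation by $\phi$, and finally the abstract gradient-system attractor theorem. One caveat on the gradient step: boundedness of the potential $(\partial_s f)(x,u)$ is not what makes the linear unique continuation result of \cite{FLL24} applicable --- that theorem requires the potential to be Gevrey-$2$ in time (and the solution to lie in $H^1_{\loc}H^3_{\loc}$), so the logical order matters: one must first use that $u$ is time-independent on $\widetilde{\omega}$ together with elliptic regularity to see that $\chi u$ is (trivially) analytic in time, then invoke the propagation-of-analyticity theorem to make $t\mapsto u(t)$ analytic everywhere, and only then apply \cite{FLL24} to $v=\partial_t u$; note also that along a constant-energy trajectory the damping term $\gamma^2\partial_t v=\partial_t(\gamma^2\partial_t u)$ vanishes identically, so the linearized equation is the undamped one covered by that theorem.
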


The main idea is that under assumption \eqref{cond:asymptf}, solutions of the damped plate \eqref{eq:nlp-stab} arrive close to the attractor $\A$ for large time. Then, by combining local control around equilibrium points and using heteroclinic connections to travel from one equilibrium to another within the attractor, we can still obtain global controllability by using the time reversibility of the system. We refer to Raugel \cite{Rau02} for concepts and main results on dynamical systems of PDEs. We have gathered the ones that will be employed in this article in \cref{appendix:DS}.

Few results in the literature address the global exact controllability of nonlinear plate equations, among which we mention the following. Lasiecka and Triggiani \cite{LT91} considered globally Lipschitz nonlinearities with a control acting on the entire boundary, relying on linear results and a suitable global inversion theorem. Zhang \cite{Zha01} studied a nonlinearity with superlinear growth, where the analysis relied on deriving suitable observability estimate for a plate equation with potential via Carleman estimates. Eller and Toundykov \cite{ET:15} examined several semilinear plate models with an internal control acting on a collar neighborhood of the boundary, with their method relying partly on uniform stabilization through a suitable feedback. For a comprehensive summary of results on nonlinear plates up to 2015, we refer the reader to the that article.

\subsubsection{Unique continuation property} As it was already explained, a key property to establish the aforementioned results is the one of unique continuation for solutions of \eqref{eq:nlp}.

\begin{theorem}\label{thm:UCP-NLP}
    Let $f$ be real analytic in the second variable. Let $0<T_0<T$ and $\omega\Subset\widetilde{\omega}$ are two nonempty open sets such that $\omega$ satisfies \cref{assumOBS} in time $T_0$. If one solution $U=(u,\partial_t u)\in C^0([0, T], H_D^2\times L^2(\M))$ of \eqref{eq:nlp} satisfies $\partial_t u=0$ in $[0, T]\times \widetilde{\omega}$, then $\partial_t u=0$ in $[0, T]\times \M$ and $u$ is an equilibrium point of \eqref{eq:nlp}, that is, solution of 
            \begin{align}\label{thm:eq:nlw-equilibrium}
                \left\{\begin{array}{rl}
                \Delta_g^2 u+\beta u+f(x, u)=0  &\ x\in\textnormal{Int}(\M),\\
                u_{|_{\partial\M}}=\Delta u_{|_{\partial\M}}=0          &\ x\in \partial\M.
                \end{array}\right.
            \end{align}
    If, moreover, the nonlinearity $f$ satisfies the defocusing assumption \eqref{NLP:defdefocusing}, then $u\equiv 0$.
\end{theorem}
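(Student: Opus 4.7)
The plan is a two-step propagation. First, I would use the fact that $\partial_t u \equiv 0$ on $\widetilde\omega$ to promote $u$ to a solution that is real-analytic in time on all of $\M$, via the propagation-of-analyticity-in-time result of Laurent and the author \cite{LL24} (whose driving hypothesis is precisely the Schrödinger/plate observability of \cref{assumOBS}). Second, I would differentiate the equation in time to obtain a linear plate equation for $v:=\partial_t u$ with a time-analytic potential, and invoke the linear unique continuation theorem of Filippas--Laurent--Léautaud \cite{FLL24} for such plate operators to conclude $v\equiv 0$ on $[0,T]\times\M$.

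In more detail, the hypothesis $\partial_t u \equiv 0$ on $[0,T]\times\widetilde\omega$ says that $u(\cdot,x)$ is constant in $t$ for every $x\in\widetilde\omega$, hence trivially real-analytic in $t$ there. Since $f$ is real-analytic in its second variable and $u$ lies in the well-posedness space $C^0([0,T]; H_D^2)\cap C^1([0,T];L^2)$, the nonlinear term $f(x,u)$ depends analytically on $u$, so one is in the setting of \cite{LL24} and one can propagate this time-analyticity from $\widetilde\omega$ to every point of $\M$: for each $x\in\M$, the map $t\mapsto u(t,x)$ is real-analytic on a neighborhood of any $t_0\in(0,T)$.

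Next, I would set $v:=\partial_t u$ and differentiate the equation once in time to obtain
\begin{align*}
    \partial_t^2 v + \Delta_g^2 v + \beta v + q(t,x)\, v = 0 \quad \text{on } [0,T]\times \Int(\M),
\end{align*}
with hinged boundary conditions and $v\equiv 0$ on $[0,T]\times\widetilde\omega$, where the potential $q(t,x):=\partial_u f(x,u(t,x))$ is real-analytic in $t$ by the previous step. Applying the linear unique continuation result of \cite{FLL24} for the plate operator with a time-analytic potential yields $v\equiv 0$ on $[0,T]\times\M$. Hence $u$ is time-independent and therefore solves the stationary equation \eqref{thm:eq:nlw-equilibrium}. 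If moreover \eqref{NLP:defdefocusing} holds, testing the stationary equation against $u$ and integrating by parts using the hinged conditions gives $\int_\M \bigl(|\Delta u|^2 + \beta|u|^2 + u\,f(x,u)\bigr)\,dx = 0$, and the sign conditions on $sf(x,s)$ (together with $\beta>0$ in the boundaryless case) force $u\equiv 0$.

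The hard part will be the first step: transferring the linear Schrödinger observability encoded in \cref{assumOBS} into a genuine nonlinear propagation of time-analyticity. The linear ingredient from \cite{LL24} must be combined with nonlinear estimates controlling compositions of time derivatives with the analytic $f$, uniformly enough that analyticity---not merely $C^\infty$ regularity---actually propagates. Once this bridge is built, the linearization step and the application of \cite{FLL24} give an essentially linear conclusion.
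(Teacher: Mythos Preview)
Your two-step strategy---propagate time-analyticity via \cite{LL24}, then apply the linear unique continuation of \cite{FLL24} to $v=\partial_t u$---is exactly the paper's approach. Two technical points deserve more care. First, the hypothesis of \cref{thm:analytic-nlp} is not merely pointwise time-analyticity of $u$ on $\widetilde\omega$, but that $t\mapsto \chi u(t,\cdot)\in H^{3+\veps}\cap H_D^1$ is analytic for a cutoff $\chi$ supported in $\widetilde\omega$; since $\partial_t u=0$ there, $u$ satisfies a stationary elliptic equation on $\widetilde\omega$, and the required $H^{3+\veps}$ regularity comes from elliptic bootstrap, not just from constancy in $t$. Second, \cref{thm:UCP-linear-plate} requires $v\in H^1_{\loc}((0,T);H^3_{\loc})$, which is not immediate from the energy space; the paper obtains this by using the time-analyticity (hence smoothness in $t$) together with a spatial bootstrap via the equation $(\Delta_g^2+\beta)u=-\partial_t^2 u-f(x,u)$ iterated until $u$ is smooth in $x$. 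Finally, your last paragraph overstates the difficulty: the propagation of analyticity is quoted from \cite{LL24} as a black box, so you do not need to rebuild the nonlinear estimates---you only need to check its Sobolev-valued hypothesis on $\widetilde\omega$.
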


To our knowledge, most available unique continuation results for linear plates mostly rely on Carleman estimates, requiring strong geometrical assumptions related to pseudoconvexity but low regularity on the coefficients, see for instance Isakov \cite{Isa97}. See also \cite{LTY03, Wan07, ET:15, IM25} and the references therein for works addressing Carleman estimates for plate equations with different boundary conditions. In this direction, Filippas, Laurent, and Léautaud \cite{FLL24} obtained a unique continuation result for the linear plate operator with lower-order terms assuming Gevrey-$2$ regularity in time and allowing fairly general geometries. This has to be compared with the remarkable theorem of Tataru \cite{Tat95,Tat99}, Robbiano-Zuily \cite{RZ:98} and H\"ormander \cite{Hor:97}, which notably does not apply to the plate operator \cite[Appendix B]{FLL24}. 

When moving towards nonlinear results of this kind, very few results are available, and those mostly rely on results for the linear equation. Here, we hinge on a recent result due to Laurent and the author \cite[Theorem 1.6]{LL24}, in which the nonlinear structure of the equation plays a crucial role to propagate analyticity in time from the vanishing zone to the full solution. This allows us to achieve the desired regularity needed to apply the aforementioned unique continuation result for linear plates. Hence, \cref{thm:UCP-NLP} substantially improves upon the existing literature on unique continuation for nonlinear plates, with respect to the geometric conditions under which such a property holds.

\begin{remark}
    The results presented so far are most likely to be true in higher dimensions $d\geq 4$ under further assumptions on the growth of nonlinearity $f$ (and its derivative). For instance, in dimension $d=4$, since we are no longer able to use the embedding $H_D^2\hookrightarrow L^\infty$ we might need to assume polynomial growth on $f$. In this way, the nonlinear energy could be controlled by the linear energy and the nonlinearity might still enjoy the needed compactness properties, similar to \cref{prop:regnonlinearity} below. For $d\geq 5$, the situation becomes more complicated as it may require the use of appropriate Strichartz estimates, which, To our knowledge are not available for plates equation in the geometries we are concerned with. We mention Pausader \cite{Pau07} for Strichartz estimates in the euclidean space $\R^d$.
\end{remark}

\subsubsection{Brief literature} Given the extensive literature on the topic, we mainly focus on results concerning nonlinear models. More precisely, for stabilization and control results on plates, our main references are Eller and Toundykov \cite{ET:15} and Tucsnak, Bournissou and Ervedoza \cite{TBE24}, as well as the references therein. 

Unless stated otherwise, the following results all hold in dimension $d=2$, with damping/control zone equal to a full collar neighborhood of the boundary. In \cite{ET:15}, the authors studied uniform stabilization and control properties of nonlinear plates with a variety of boundary conditions, notably including models with strictly dissipative polynomial nonlinearity and the Berger plate model. We also mention \cite{NM09} where local control for Berger plates was obtained under the assumption that the linear system is controllable. Studying models such as \eqref{eq:nlp} might open the path toward treating more challenging models such as the Von Kármán plate equation, whose nonlinearity is $[u, \Phi(u, u)]$ where $\Phi$ is the Airy stress function. Regarding the long-time dynamics and existence of compact attractors for Von Kármán and Berger plates with a variety of boundary conditions, we refer to Geredeli, Lasiecka and Webster \cite{GLW13} and Geredeli and Webster \cite{GW13}, respectively. In these works, a nonlinear damping is considered and is emphasized that the unique continuation for such nonlinear models is a challenging problem. For a thorough study of long-time dynamics of Von Kármán models, we refer to the monograph of Chueshov and Lasiecka \cite{CL10}.

Regarding control properties, instead assuming the more general Geometric Control Condition and keeping hinged boundary conditions, in \cite{TBE24} the local exact controllability is proved around analytic equilibria for the Von Kármán plate equation. A similar result on local control around equilibrium for the Berger equation was also obtained. We point out that, although the previous results might directly share the same framework, by employing the aforementioned control strategy it might be possible to combine the long-time dynamics results with the local control around equilibrium to obtain semiglobal control of these models. In other words, an extension of \cref{thm:attractorA} might lead to a semiglobal controllability result within the geometric framework of the present article for such models.

In the previous paragraphs, by 'variety of boundary conditions' we mean the three main boundary conditions encountered in the literature for plate equations: hinged, clamped and free. Although some of the previous results addressed these types of boundary conditions, it is quite challenging to obtain general results encompassing all of them. For instance, trying to translate our results to the clamped case is already an interesting problem, since, not only is the linear observability not well understood, but to our knowledge, there is no general unique continuation result available for the linear clamped plate. However, although we heavily rely on the Schrödinger decomposition of the plate operator, we believe that our analysis and the abstract propagation result of \cite{LL24} might shed light on the study of the more involved boundary conditions. In this direction, we also refer to the work of Le Rousseau and Zongo \cite{LeRZ23} regarding the stabilization of the damped linear plate equation with boundary conditions satisfying the Lopatinski\u{\i}-\v{S}apiro condition. There, by means of microlocal analysis and resolvent estimates, the authors proved that, under no geometric assumption on the damping, the decay rate is at most logarithmic. To our knowledge, there is no result for nonlinear plates under such general boundary conditions.

\subsection{Outline of the article} \cref{sec:pre} introduce some necessary preliminaries about the plate equation. \cref{sec:ucp} we prove \cref{thm:UCP-NLP} of unique continuation. \cref{sec:dyn} is devoted to the study of the damped plate equation \eqref{eq:nlp-stab}, where we prove \cref{thm:NLP-decay} and \cref{thm:attractorA}. \cref{sec:control} is devoted to the controllability result and contains the proof of \cref{thm:NLP-control}.

\subsection{Acknowledgments} I would like to thank Camille Laurent for drawing my attention to this problem, as well as for the helpful discussions, encouragement, and patient guidance.

This project has received funding from the European Union's Horizon 2020 research and innovation programme under the Marie Sk\l{}odowska-Curie grant agreement No 945332.

\section{Preliminaries}\label{sec:pre}

\subsection{Well-posedness} Let $A_0=-\Delta_g$ be the Laplace-Beltrami operator, equipped with Dirichlet boundary conditions if $\partial\M\neq\emptyset$. Recall that $A_0: D(A_0)\to L^2(\M)$ is a self-adjoint and nonnegative operator. In this case, its domain is given by $D(A_0)=H^2(\M)\cap H_0^1(\M)$ on $L^2(\M)$. Set $X=D(A_0)\times L^2(\M)$ and introduce the densely defined operator $\bA: D(\bA)\to X$ given by
\begin{align*}
    \bA=\left(\begin{array}{cc}
        0 & I \\
        -A_0^2-\beta I & 0
        \end{array}\right)\ \text{ with }\ D(\bA)=D(A_0^2)\times D(A_0).
\end{align*}
For $s\geq 0$, we introduce the scale of Sobolev spaces $H_D^s=D((-\Delta_g)^{s/2})$ equipped with the inner product
\begin{align*}
    \inn{\phi, \psi}_{H_D^s}=\inn{(1+A_0)^{s/2}\phi, (1+A_0)^{s/2}\psi}_{L^2(\M)},
\end{align*}
see \cite{G:67}. Then, for $\sigma\in [0, 1]$, $X^\sigma$ denotes the space
\begin{align}\label{defSob}
    X^\sigma&=D(A_0^{1+\sigma})\times D(A_0^{\sigma})=H_D^{2+2\sigma}\times H_D^{2\sigma},
\end{align}
equipped with the natural inner product. By Stone's theorem, $\bA$ generates a unitary $C_0$-group on $X$ and $D(\bA)$. By linear interpolation, so it does on $X^\sigma$ for any $\sigma\in [0, 1]$.

We set $F\in C^0(X)$ to be the map
\begin{align*}
    F: (u, v)\in X\longmapsto (0, -f(\cdot, u))\in X.
\end{align*}
If $U_0=(u_0, u_1)\in X$ and $U=(u, \partial_t u)$, we can recast our equation \eqref{eq:nlp} as
\begin{align*}
    \left\{\begin{array}{cl}
        \partial_t U=\bA U+F(U),&\ t>0,\\
        U(0)=U_0,&
    \end{array}\right.
\end{align*}
We first state the global well-posedness for solutions of \eqref{eq:nlp} under the asymptotic defocusing condition \eqref{cond:asymptf}.

\begin{proposition}\label{prop:wellposed}
    Let $f\in C^4(\R\times\M, \R)$ satisfy \eqref{cond:asymptf}. For any $T>0$, any $U_0=(u_0, u_1)\in X=H_D^2\times L^2(\M)$ and $h\in L^1([0, T], L^2(\M))$ there exists a unique solution $U(t)=(u(t), \partial_t u(t))\in C^0([0, T], X)$ of 
    \begin{align}\label{eq:nlp-source}
    \left\{\begin{array}{rl}
        \partial_t^2 u+\Delta_g^2 u+\beta u+f(x, u)=h   &\ (t, x)\in [0, T]\times \Int(\M),\\
        u_{|_{\partial\M}}=\Delta u_{|_{\partial\M}}=0      &\ (t, x)\in [0, T]\times\partial\M,\\
        (u, \partial_t u)(0)=(u_0, u_1) &\ x\in \M,
    \end{array}\right.
    \end{align}
    Moreover the flow map
    \begin{align*}
    \left\{
    \begin{array}{rcl}
    X \times L^1([0,T], L^2(\M))&\longrightarrow & C^0([0, T], X)\\
                (U_0, h) &\longmapsto   &  U
    \end{array}\right.
    \end{align*}
    is Lipschitz on bounded sets.
\end{proposition}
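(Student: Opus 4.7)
The plan is to treat \eqref{eq:nlp-source} as a semilinear perturbation of the unitary group $e^{t\bA}$ on $X$ and apply Picard iteration to the Duhamel formulation
\begin{align*}
U(t)=e^{t\bA}U_0+\int_0^t e^{(t-s)\bA}\big(F(U(s))+(0,h(s))\big)\,ds,
\end{align*}
where $F(u,v)=(0,-f(\cdot,u))$. Local existence, uniqueness and Lipschitz continuity of the flow will come from a contraction on $C^0([0,T^\ast],X)$ for $T^\ast$ small enough, and globalization will be reduced to an a priori bound on $\norm{U(t)}_X$ derived from the natural energy together with the asymptotic condition \eqref{cond:asymptf}.

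For the local step, the key property is that $F:X\to X$ is Lipschitz on bounded sets. This is where $d\leq 3$ enters, through the continuous embedding $H_D^2\hookrightarrow L^\infty(\M)$: combined with $f\in C^4(\M\times\R)$, it yields
\begin{align*}
\norm{f(\cdot,u_1)-f(\cdot,u_2)}_{L^2(\M)}\leq C(R)\norm{u_1-u_2}_{H_D^2}\quad\text{whenever}\quad\norm{u_j}_{H_D^2}\leq R.
\end{align*}
A standard fixed-point argument on the closed ball of $C^0([0,T^\ast],X)$ of radius $2(\norm{U_0}_X+\norm{h}_{L^1([0,T],L^2)})$ then produces the unique local solution and an explicit Lipschitz estimate for $(U_0,h)\mapsto U$ over $[0,T^\ast]$.

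To rule out finite-time blow-up I would use the energy identity $\partial_t E(U)=\inn{h,\partial_t u}_{L^2}$, obtained by testing the equation against $\partial_t u$ and justified by density for $X$-valued solutions. The crucial observation is that \eqref{cond:asymptf} implies a uniform lower bound $V(x,s)\geq -C_0$ on $\M\times\R$: for $s\geq R$ the sign condition forces $f(x,\tau)\geq 0$ on $[R,s]$, so $V(x,s)\geq V(x,R)$, and $V(x,R)$ is bounded below uniformly in $x$ by compactness of $\M$ and continuity of $f$ on $[-R,R]\times\M$; the case $s\leq -R$ is symmetric. Consequently $E(U)+C_0|\M|$ is equivalent to the squared linear energy, and $\partial_t E(U)\leq\norm{h}_{L^2}\sqrt{2(E(U)+C_0|\M|)}$. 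Integrating the resulting differential inequality for $\sqrt{E(U)+C_0|\M|}$ yields an a priori bound on $\norm{U(t)}_X$ in terms of $\norm{U_0}_X$ and $\norm{h}_{L^1([0,T],L^2)}$, which rules out blow-up and produces the global solution. Lipschitz dependence on bounded sets then extends to $[0,T]$ by iterating the local estimate on a finite partition whose mesh is controlled by this a priori bound.

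The main obstacle, compared to the strict defocusing setting \eqref{NLP:defdefocusing} where $V\geq 0$ trivially, is establishing the uniform lower bound on $V$ from the weaker condition \eqref{cond:asymptf}. Once this is secured, the rest is the textbook semigroup--Duhamel scheme.
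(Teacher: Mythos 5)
Your proposal is correct and follows essentially the same route as the paper: local well-posedness via Picard iteration on the Duhamel formula using that $F$ is Lipschitz on bounded sets of $X$ (through $H_D^2\hookrightarrow L^\infty$ for $d\leq 3$), followed by globalization from the uniform lower bound on $V$ deduced from \eqref{cond:asymptf} and a Gronwall-type argument on the energy identity $\partial_t E=\inn{h,\partial_t u}_{L^2}$. The only cosmetic difference is that you integrate the differential inequality for $\sqrt{E+C_0|\M|}$ directly, while the paper works with the integrated energy inequality; and note that only the lower bound $E(U)+C_0|\M|\geq\tfrac12\norm{U}_X^2$ is needed, not the full equivalence you assert.
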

\begin{proof}
    Since $H_D^2$ is an algebra for $d\leq 3$, for any $u$, $v$ with $\norm{u}_{H_D^2}$, $\norm{u}_{H_D^2}\leq R$, by classical Sobolev embedding,
    \begin{align*}
        \norm{f(\cdot, u)-f(\cdot, v)}_{L^2(\M)}\leq \Norm{\int_0^1 f_s'(\cdot, v+\tau(u-v))(u-v)d\tau}_{L^2(\M)}\leq C(R)\norm{u-v}_{H_D^2}.
    \end{align*}
    Therefore, $F$ is Lipschitz on the bounded sets of $X$ and thus the local well-posedness can be established by classical Picard iteration, see for instance \cite[Theorem 1.4]{Pazy82}.

    To globalize the solution we need to obtain some suitable bounds on the energy. First, given that $\M$ is compact and assumption \eqref{cond:asymptf} forces that $s\mapsto V(\cdot, s)$ is non-increasing on $(-\infty, -R)$ and non-decreasing in $(R, +\infty)$, it follows that $V$ attains a global infimum on $\M\times\R$, that is, $\inf_{(x, s)\in \M\times\R} V(x, s)\geq -C_0$ for some $C_0\in\R$. Therefore, for any $U\in X$ it holds
    \begin{align}\label{ineq:energybelow}
        E(U)\geq \dfrac{1}{2}\norm{U}_X^2-\meas(\M)C_0.
    \end{align}
    Second, we note that for $s\leq t$, the energy $E$ satisfies
    \begin{align*}
        E(U(t))-E(U(s))=\int_s^t\int_\M h\partial_t udxd\tau,
    \end{align*}
    from which follows
    \begin{align}\label{ineq:energyabove}
        E(U(t))\leq E(U(s))+C\norm{h}_{L^1([s, t], L^2(\M))}\sup_{\tau\in[s, t]}\big(E(U(\tau))+C_1\big)^{1/2},
    \end{align}
    with $C_1:=\meas(\M)C_0$.  By Gronwall's inequality, we readily get that the energy remains bounded in bounded intervals. 
    So, both estimates \eqref{ineq:energybelow} and \eqref{ineq:energyabove} give us a bound on the norm for the solution to our equation. Given that $f$ is smooth and satisfies $f(x, 0)=0$ for every $x\in\partial\M$, \cref{app:lem:NLestimate} give us $\norm{F(U)}_X\leq C(\norm{u}_{L^\infty})\norm{U}_X$. So, if $U$ belongs to a bounded set in $X$, the Sobolev embedding $H_D^2\hookrightarrow L^\infty$ tells us that the energy remains bounded and thus does not blow up in finite time. 

    Finally, the continuity estimate follows by using Duhamel's formula along with the fact that the nonlinearity is Lipschitz on bounded sets and a Gronwall type argument.
\end{proof}

\begin{remark}
    Note that the condition $\Delta u_{|_{\partial\M}}=0$ which does not make sense at this level of regularity is meant as an extension of the related semigroup. The nonlinear equation \eqref{eq:nlp-source} is meant in the sense of the Duhamel formula.
\end{remark}

Let us introduce the linear bounded operator
\begin{align*}
    \bB=\begin{pmatrix}
        0  & 0 \\
        0  & -\gamma^2(x)
    \end{pmatrix}
\end{align*}
so that the damped system \eqref{eq:nlp-stab} can be rewritten as
\begin{align*}
    \left\{\begin{array}{cl}
        \partial_t U=(\bA+\bB)U+F(U),&\ t>0,\\
        U(0)=U_0,&
    \end{array}\right.
\end{align*}
With minor modifications, we can establish a similar result for the damped system \eqref{eq:nlp-stab}.

\begin{proposition}\label{prop:wellposeddamped}
    Let $f$ satisfy \eqref{cond:asymptf}. For any $U_0=(u_0, u_1)\in X=H_D^2\times L^2(\M)$ there exists a unique solution $U(t)=(u(t), \partial_t u(t))\in C^0(\R, X)$ of the damped plate equation \eqref{eq:nlp-stab}. Moreover, its energy $E(u, \partial_t u)$ is non-increasing in time and for any $T>0$, the flow map $U_0\in X\longmapsto U\in C^0([-T, T], X)$ is Lipschitz on bounded sets. 
\end{proposition}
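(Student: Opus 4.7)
The plan is to deduce this result from \cref{prop:wellposed} by treating the damping term as a bounded linear perturbation of $\bA$. Since $\bB\in \mc{L}(X)$ and $\bA$ generates a unitary $C_0$-group on $X$, a classical bounded-perturbation argument (see e.g.\ Pazy \cite{Pazy82}) ensures that $\bA+\bB$ still generates a $C_0$-group on $X$, with $\norm{e^{t(\bA+\bB)}}_{\mc{L}(X)}\leq e^{|t|\norm{\bB}_{\mc{L}(X)}}$. Recasting \eqref{eq:nlp-stab} through the Duhamel formula associated with this group and using that $F$ is Lipschitz on bounded sets of $X$ (as already established in the proof of \cref{prop:wellposed}), I would obtain local well-posedness via the contraction mapping principle, and the Lipschitz dependence of the flow map on bounded sets via a standard Gronwall argument applied to the difference of two Duhamel formulas.

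For the monotonicity of the energy, I would first take smooth data $U_0\in D(\bA)$ so that the solution is regular enough to justify multiplying \eqref{eq:nlp-stab} by $\partial_t u$, integrating by parts over $\M$ and using the hinged boundary conditions. This yields the pointwise identity
\begin{align*}
    \frac{d}{dt}E(U(t))=-\int_\M \gamma^2(x)|\partial_t u|^2\,dx\leq 0,
\end{align*}
and by density of $D(\bA)$ in $X$ together with the Lipschitz continuity of the flow map on bounded sets, the non-increase of $E$ would then pass to every initial datum in $X$. For forward global existence, the lower bound $E(U)\geq \tfrac{1}{2}\norm{U}_X^2-C_1$ derived from \eqref{cond:asymptf} exactly as in the proof of \cref{prop:wellposed}, combined with this energy non-increase, gives $\norm{U(t)}_X^2\leq 2(E(U_0)+C_1)$ for all $t\geq 0$, preventing blow-up in positive time.

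For backward times, the energy is no longer monotone; instead, I would apply the same strategy to $v(s):=u(-s)$, which solves a plate equation with \emph{anti-damping} $-\gamma^2(x)\partial_s v$. Its energy $\widetilde E(s)$ then satisfies $\frac{d}{ds}\widetilde E(s)\leq 2\norm{\gamma}_{L^\infty}^2\bigl(\widetilde E(s)+C_1\bigr)$, so Gronwall's inequality would produce an exponential a priori bound on any bounded interval, ruling out blow-up in negative time. The main point requiring care is the density-continuity justification of the energy identity at the $X$-level of regularity; the remaining steps are direct adaptations of the arguments used in \cref{prop:wellposed}.
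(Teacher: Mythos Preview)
Your proposal is correct and follows essentially the same approach as the paper. The paper's proof is a two-line sketch stating that the only change from \cref{prop:wellposed} is the dissipative energy identity $\frac{d}{dt}E(U)=-\int_\M \gamma^2|\partial_t u|^2\,dx$, with ``the remaining details filled in a similar fashion''; you have simply made those details explicit, in particular the bounded-perturbation argument for the generator $\bA+\bB$ and the backward-in-time Gronwall bound via the anti-damped equation, both of which the paper leaves implicit.
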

\begin{proof}
    The only modification on the sketch of the previous proof is that now the energy is dissipative and it satisfies
    \begin{align*}
        \dfrac{d}{dt}E(U)=-\int_\M \gamma^2(x)|\partial_t u|^2dx.
    \end{align*}
    The remaining details are filled in a similar fashion.
\end{proof}

\subsection{Nonlinearity properties} In what follows, we denote by $\mathbb{B}_R(Y)$ the ball of radius $R>0$ centered at $0$ of some Banach space $Y$. We now show that the nonlinearity is more regular than it seems to be as a map defined on the energy space $X$.

\begin{proposition}\label{prop:regnonlinearity}
    Let $f\in C^4(\M\times \R, \R)$ with $f(x, 0)=0$ for $x\in\partial\M$. For every $\sigma\in [0, 1)$, $F$ maps any bounded set $\B$ of $X=H_D^2\times L^2$ into a bounded set $F(\B)$ contained in $X^\sigma=(H^{2+2\sigma}\cap H_D^2)\times H_D^{2\sigma}$. Furthermore, $F(\B)$ is relatively compact in $X^\sigma$.
    Moreover, for any $U$, $V\in \B$ then
    \begin{align}\label{ineq:FlipXsigma}
        \norm{F(U)-F(V)}_{X^\sigma}\leq C(\B)\norm{U-V}_X.
    \end{align}
\end{proposition}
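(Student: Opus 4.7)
Since the first coordinate of $F(U)=(0,-f(\cdot,u))$ is identically zero and trivially sits in $H^{2+2\sigma}\cap H_D^2$, the whole proposition reduces to properties of the nonlinear composition $N:u\mapsto f(\cdot,u)$. My plan is to prove a single core fact, namely that $N$ sends $H_D^2$-bounded sets into $H_D^2$-bounded sets, and then deduce all three claims essentially for free using the continuous embedding $H_D^2\hookrightarrow H_D^{2\sigma}$ (for $\sigma\leq 1$), its compactness for $\sigma<1$ (Rellich), and the algebra property of $H^2$ in dimension $d\leq 3$.

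\textbf{Boundedness in $H_D^2$.} For $u$ in the ball of radius $R$ in $H_D^2$, I would first check the Dirichlet trace: $u|_{\partial\M}=0$ together with the hypothesis $f(x,0)=0$ on $\partial\M$ forces $f(\cdot,u)|_{\partial\M}=0$. For the $H^2$ estimate, expanding $\Delta_g(f(\cdot,u))$ by the chain rule produces a linear combination of
\begin{equation*}
(\Delta_g f)(\cdot,u),\qquad (\nabla_x f'_s)(\cdot,u)\cdot\nabla u,\qquad f''_{ss}(\cdot,u)\,|\nabla u|_g^2,\qquad f'_s(\cdot,u)\,\Delta_g u.
\end{equation*}
The embedding $H_D^2\hookrightarrow L^\infty$ (valid for $d\leq 3$) bounds every coefficient $f^{(k)}(\cdot,u)$ in $L^\infty$ uniformly in $\|u\|_{H_D^2}\leq R$. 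Two of the four summands are then direct $L^\infty\cdot L^2$ products; the quadratic gradient term is handled by $\nabla u\in H^1\hookrightarrow L^4$ (again for $d\leq 3$), so $|\nabla u|^2\in L^2$. Altogether $\|f(\cdot,u)\|_{H_D^2}\leq C(R)$.

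\textbf{Deducing boundedness, compactness and \eqref{ineq:FlipXsigma}.} The continuous embedding $H_D^2\hookrightarrow H_D^{2\sigma}$ immediately yields the $X^\sigma$-bound on $F(\B)$, and the same embedding being compact for $\sigma<1$ makes $F(\B)$ relatively compact in $X^\sigma$. For the Lipschitz estimate I would write
\begin{equation*}
f(\cdot,u)-f(\cdot,v)=g\cdot(u-v),\qquad g(x)=\int_0^1 f'_s\bigl(x,v+\tau(u-v)\bigr)\,d\tau.
\end{equation*}
Since $f\in C^4$, exactly the derivative expansion above, applied to $f'_s$ rather than $f$, gives $\|g\|_{H^2}\leq C(R)$. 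The $H^2$-algebra property then yields $\|g(u-v)\|_{H^2}\leq C(R)\|u-v\|_{H^2}$, and $u-v\in H_0^1$ together with $g\in L^\infty$ ensures $g(u-v)$ still vanishes on $\partial\M$, so the same bound holds in $H_D^2$. A last application of $H_D^2\hookrightarrow H_D^{2\sigma}$ closes \eqref{ineq:FlipXsigma}.

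\textbf{Main obstacle.} I do not foresee any substantial difficulty: the only points requiring genuine care are the interplay between $H^2$ and $H_D^2$ (where the hypothesis $f(x,0)=0$ on $\partial\M$ is essential to get the Dirichlet trace, not merely the interior regularity), and the systematic reliance on $H_D^2\hookrightarrow L^\infty$ and the $H^2$-algebra property. Both of these force the dimension restriction $d\leq 3$ that is flagged in the introduction as the reason why a polynomial growth assumption on $f$ would be needed in $d\geq 4$.
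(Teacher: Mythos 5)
Your proof is correct. The skeleton — establish boundedness of $u\mapsto f(\cdot,u)$ from $H_D^2$ into $H_D^2$, then feed everything through the (compact, for $\sigma<1$) embedding $H_D^2\hookrightarrow H_D^{2\sigma}$ — coincides with the paper's, but your key technical step is genuinely different. The paper invokes the fractional Nemytskii composition lemma (\cref{app:lem:NLcomposition}, a Moser/Meyer-multiplier estimate valid in $H^s$ for non-integer $s$), and for the Lipschitz bound it must first subtract $f_s'(\cdot,0)$ so that the lemma's hypothesis $g(\cdot,0)=0$ is met, working in $H_D^\nu$ with $\nu\in(3/2,2)$ before concluding via the algebra property. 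You instead differentiate twice by hand in the integer-order space $H^2$, where the only inputs are $H^2\hookrightarrow L^\infty$, $H^1\hookrightarrow L^4$ and the $H^2$-algebra property for $d\leq 3$; on a compact manifold this needs no vanishing-at-zero normalization (that hypothesis only buys the linear bound $\|g(u)\|_{H^s}\lesssim\|u\|_{H^s}$, whereas mere boundedness of $\|f(\cdot,u)\|_{H^2}$ on a bounded set suffices here). Your route is more elementary and self-contained, and it actually delivers the Lipschitz estimate in the stronger norm $H_D^2$ rather than only $H_D^{2\sigma}$; the paper's route is less computation-heavy and reuses a lemma it needs anyway for genuinely fractional exponents elsewhere. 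Your handling of the Dirichlet trace (using $f(x,0)=0$ on $\partial\M$ and $u-v\in H_0^1$) is the right observation and matches the role this hypothesis plays in the paper.
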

\begin{proof}
    Since $d\leq 3$ and $f\in C^4(\M\times \R, \R)$ with $f(x, 0)=0$ for $x\in \M$, \cref{app:lem:NLcomposition} shows that $f: H_D^2\to H_D^2$ is well-defined and $\norm{f(u)}_{H_D^2}\leq C(\norm{u}_{L^\infty(\M)})\norm{u}_{H_D^2}$. Thus, for any $\sigma\in [0, 1)$, since the compact embedding $H_D^2\hookrightarrow H_D^{2\sigma}$ holds by Rellich-Kondrachov's theorem, we get that the set $\{f(u)\ |\ (u, v)\in \B\}$ is relatively compact in $H_D^{2\sigma}$. Thus, due to the structure of the nonlinearity, we readily see that $F$ maps bounded sets of $X$ into relatively compact sets of $X^\sigma$.

    Let us write
    \begin{align*}
        f(\cdot, u)-f(\cdot, v)=(u-v)\int_0^1 \big(f_s'(\cdot, v+\tau(u-v))-f_s'(\cdot, 0)\big)d\tau+(u-v)f_s'(\cdot, 0).
    \end{align*}
    Since $f_s'\in C^3(\M\times\R, \R)$, by \cref{app:lem:NLcomposition} we readily get $f_s'-f_s'(\cdot, 0): H_D^\nu\to H_D^\nu$ with $\nu\in (3/2, 2)$ is well-defined and $\norm{f_s'(\cdot, w)-f_s'(\cdot, 0)}_{H_D^\nu}\leq C(\norm{w}_{L^\infty})\norm{w}_{H_D^\nu}$. In particular, using that $H_D^2$ is an algebra and Sobolev embeddings, for any $u$, $v\in H_D^2$ we have
    \begin{align*}
        \norm{f(u)-f(v)}_{H_D^{2\sigma}}\leq C\norm{u-v}_{H_D^2}\big(1+\norm{u}_{H_D^{2}}+\norm{v}_{H_D^2}\big),
    \end{align*}
    where the constant $C$ depends only on $f$ and on the $L^\infty$ norm of both $u$ and $v$. By Sobolev embedding, this estimate directly implies \eqref{ineq:FlipXsigma}, since, for any $U$, $V\in\B$,
    \begin{align*}
        \norm{F(U)-F(V)}_{X^\sigma}&=\norm{f(u)-f(v)}_{H_D^{2\sigma}}\leq C\norm{u-v}_{H_D^2}\leq C\norm{U-V}_{X},
    \end{align*}
    where $C$ only depends on $f$ and $\B$.
\end{proof}

\subsection{Linear observability}\label{s:obs} Now we state a suitable observability inequality for the linear plate equation, which will be employed later on. Obtaining such observability inequality follows from Lebeau's strategy \cite{Leb92} and the observation that, under hinged boundary conditions, the bi-Laplace operator is precisely the square root of the Dirichlet-Laplace operator. Even though Lebeau's was first concerned with obtaining observability of linear plates under the GCC, as long as observability for Schrödinger holds true, this abstract strategy allows to transfer observability results known from the linear Schrödinger equation, to the linear hinged plate equation. This explains the geometric assumption on the pair $(\M, \omega)$ and it notably simplifies the arguments. The following result is borrowed from \cite[Section 4]{LL24}.

\begin{proposition}\label{prop:obs-plate}\cite[Proposition 4.6]{LL24}
 Let $T_0>0$ and $\omega$ satisfying \cref{assumOBS} on $(0, T_0)$. Then, for any $0<T_0<T<+\infty$ and any $b_{\omega}\in C^{\infty}(\M)$ so that $b_{\omega}=1$ on $\omega$, there exists $C>0$ such that for any $s\in [0,2]$, any $(z_0, z_1)\in H_D^{2}\times L^2(\M)$ and associated solution $z$ of
    \begin{align*}
    \left\{\begin{array}{rl}
        \partial_t^2 z+\Delta_g^2 z=0  &\ (t, x)\in [0, T]\times \Int(\M),\\
        z_{|_{\partial\M}}=\Delta_g z_{|_{\partial\M}}=0            &\ (t, x)\in [0, T]\times\partial\M,\\
        (z, \partial_t z)(0)=(z_0, z_1) &\ x\in \M,
    \end{array}\right.
\end{align*}
we have, if $\partial M\neq \emptyset$ 
       \begin{align}\label{thm:ineq:obs-plate-1}
        \norm{(z_0, z_1)}_{H^{2+s}_D\times H^s_D(\M)}^2\leq C\int_0^T \norm{b_\omega \Delta_g z(t)}_{H^s_D(\M)}^2 dt,
    \end{align}
    and if $\partial M= \emptyset$ 
    \begin{align}\label{thm:ineq:obs-plate-1bis}
        \norm{(z_0, z_1)}_{H^{2+s}\times H^s(\M)}^2\leq C\int_0^T \norm{b_\omega z(t)}_{H^{2+s}(\M)}^2 dt.
    \end{align}
\end{proposition}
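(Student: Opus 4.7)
The plan is to reduce the plate observability to the Schrödinger observability furnished by \cref{assumOBS}, via the operator factorization $\partial_t^2+\Delta_g^2=(\partial_t-iA)(\partial_t+iA)$, where $A=-\Delta_g$ equipped with Dirichlet boundary conditions. Under hinged boundary conditions $A$ is positive self-adjoint on $L^2(\M)$ and $\Delta_g^2 = A^2$ on the natural domain, so any solution $z$ of the plate equation decomposes as $z=z^++z^-$ with
$$z^\pm(t)=\tfrac{1}{2}e^{\pm itA}\bigl(z_0\mp iA^{-1}z_1\bigr),$$
each component $z^\pm$ satisfying the Schrödinger-type equation $i\partial_t z^\pm = \pm A z^\pm$. (In the closed case $\partial\M=\emptyset$, $A$ has constants as kernel and $A^{-1}z_1$ makes sense only on the orthogonal complement; I isolate this zero-frequency mode at the end.) The upshot is the norm equivalence
$$\|(z_0,z_1)\|_{H^{2+s}_D\times H^s_D}^2 \sim \|A v_0^+\|_{H^s_D}^2 + \|A v_0^-\|_{H^s_D}^2,$$
with $v_0^\pm=\tfrac{1}{2}(z_0\mp iA^{-1}z_1)$, which recasts the problem as one of estimating $\|A v_0^\pm\|_{H^s_D}$ from observations of $z$.

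\textbf{Paragraph 2.} I next upgrade \cref{assumOBS} from an $L^2$ observability to an $H^s_D$ one, for $s\in[0,2]$. Since $(I+A)^{s/2}$ commutes with the Schrödinger group $e^{\pm itA}$ but not with the sharp cutoff $\mathbbm{1}_\omega$, I would apply $(I+A)^{s/2}$ to the initial datum, use Assumption~1, and then control the commutator $[(I+A)^{s/2},\mathbbm{1}_\omega]$ by enlarging the cutoff to a smooth $b_\omega$ and absorbing lower-order contributions through a standard compactness-uniqueness step whose uniqueness input is unique continuation for Schrödinger. Applied to $Av_0^\pm$, which is itself the initial datum of the Schrödinger solution $Az^\pm$, this yields
$$\|A v_0^\pm\|_{H^s_D}^2 \;\le\; C\int_0^T \|b_\omega A z^\pm(t)\|_{H^s_D}^2\,dt$$
for every $T>T_0$.

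\textbf{Paragraph 3.} The main obstacle is decoupling: the estimate in the statement involves $b_\omega\Delta_g z=-b_\omega A(z^++z^-)$, not the individual $b_\omega A z^\pm$. To separate the two branches I would use that $z^+$ and $z^-$ have disjoint time-Fourier supports: spectrally, $z^\pm=\sum_n a_n^\pm e^{\pm i\lambda_n t}\phi_n$ with $\lambda_n>0$, so $z^+$ lives on positive and $z^-$ on negative temporal frequencies. Enlarging the observation window (allowed since \cref{assumOBS} is known for every $T>0$ in the cases of interest) and inserting a smooth time-Fourier multiplier that isolates $\tau>0$ or $\tau<0$ projects $A z$ onto $Az^\pm$ up to cross-terms that are compact relative to the energy norm of the data. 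A compactness-uniqueness argument — whose rigidity input is elliptic unique continuation for $\Delta_g^2+\beta$ together with the Schrödinger uniqueness already invoked — absorbs these terms into the main estimate.

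\textbf{Paragraph 4.} Combining Steps 2 and 3 with the norm equivalence of Step 1 yields \eqref{thm:ineq:obs-plate-1} in the case $\partial\M\neq\emptyset$. In the closed case, the decomposition goes through on the orthogonal complement of $\ker A$, but the zero-frequency mode (constants) cannot be seen through $\Delta_g z$; it has to be observed directly by $b_\omega z$, which is why the natural observation norm shifts by two derivatives and the conclusion takes the form \eqref{thm:ineq:obs-plate-1bis}. I expect the decoupling of Step 3 to be the truly delicate step, as it is where the specific plate-versus-Schrödinger geometry really enters; the rest of the argument is structural once the factorization has been set up.
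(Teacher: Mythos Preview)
The paper does not prove this proposition; it is quoted verbatim from \cite[Proposition 4.6]{LL24} and attributed to Lebeau's strategy \cite{Leb92} of factorizing the hinged plate operator into two Schr\"odinger propagators and transferring the Schr\"odinger observability to each branch via the sign of the temporal frequency. Your sketch follows exactly that route---factorization, $H^s_D$-lift of Assumption~1, and decoupling through positive/negative time-frequencies---so it coincides with the intended approach.

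Two minor imprecisions worth flagging. In Paragraph~3, the parenthetical ``allowed since \cref{assumOBS} is known for every $T>0$ in the cases of interest'' is both unnecessary and slightly off: the proposition only assumes observability at some $T_0$, and any $T>T_0$ then works simply by restricting to a subinterval of length $T_0$; you should not appeal to the specific geometric examples. Also, the uniqueness input you invoke there (``elliptic unique continuation for $\Delta_g^2+\beta$'') is extraneous---the proposition concerns the free plate without $\beta$, and the rigidity needed to absorb the compact cross-terms is just the Schr\"odinger observability itself (which already forbids nonzero invisible data).
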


The following observability inequality with time-independent potential will be used later on.

\begin{corollary}\label{cor:obs-plate}
    Let $V\in C^\infty(\M)$ and $\omega$ satisfying \cref{assumOBS} in time $T_0>0$. Let $\gamma\in C^\infty(\M)$ be as in \cref{NLP:s:introstab}. Under the hypotheses of \cref{prop:obs-plate}, for any $T>T_0$ there exists $C>0$ such that, for any $(z_0, z_1)\in H_D^{2}\times L^2(\M)$ and associated solution $z$ of
    \begin{align}\label{eq:obspotential}
    \left\{\begin{array}{rl}
            \partial_t^2 z+\Delta_g^2 z+\beta z+V(x)z=0  &\ (t, x)\in [0, T]\times \Int(\M),\\
            z_{|_{\partial\M}}=\Delta z_{|_{\partial\M}}=0            &\ (t, x)\in [0, T]\times\partial\M,\\
            (z, \partial_t z)(0)=(z_0, z_1) &\ x\in \M,
        \end{array}\right.
    \end{align}
    we have
    \begin{align}\label{cor:ineq:obs-plate-1}
        \norm{(z_0, z_1)}_{H_D^2\times L^2(\M)}^2\leq C\int_0^T \norm{\gamma z(t)}_{H_D^2(\M)}^2 dt.
    \end{align}
\end{corollary}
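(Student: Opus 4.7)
The proof proceeds by a perturbation reduction to \cref{prop:obs-plate}, followed by a compactness-uniqueness argument to remove a compact lower-order remainder. The main obstacle is the unique continuation for the linear plate with time-independent potential needed to close the compactness-uniqueness contradiction.

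\textbf{Step 1 (weak observability via Duhamel).} Split $z=z_L+\tilde z$, where $z_L$ solves the homogeneous hinged plate equation with initial data $(z_0,z_1)$ and $\tilde z$ solves $\partial_t^2\tilde z+\Delta_g^2\tilde z=-(\beta+V)z$ with zero Cauchy data and hinged boundary conditions. Since $\gamma\geq\gamma_0>0$ on $\omega$ and $\gamma\in C^\infty(\M)$, I pick $b_\omega\in C^\infty(\M)$ equal to $1$ on $\omega$ with $\supp b_\omega\subset\{\gamma\geq\gamma_0/2\}$. The plate energy estimate yields $\norm{\tilde z}_{L^\infty([0,T],H_D^2)}\leq C\norm{z}_{L^2([0,T],L^2)}$, which combined with \cref{prop:obs-plate} (with $s=0$, in both boundary cases) applied to $z_L=z-\tilde z$ gives
\[
\norm{(z_0,z_1)}_{H_D^2\times L^2}^2\leq C\int_0^T\norm{b_\omega\Delta_g z}_{L^2}^2\,dt+C\int_0^T\norm{z}_{L^2}^2\,dt.
\]
Using the commutator identity $\gamma\Delta_g z=\Delta_g(\gamma z)-2\nabla\gamma\cdot\nabla z-z\,\Delta_g\gamma$, the pointwise bound $|b_\omega|\leq(2/\gamma_0)\gamma$ on its support, and the equivalence $\norm{\gamma z}_{H_D^2}\sim\norm{\Delta_g(\gamma z)}_{L^2}+\norm{\gamma z}_{L^2}$ (valid since $\gamma z$ has Dirichlet boundary values), this converts into the weak observability
\[
\norm{(z_0,z_1)}_{H_D^2\times L^2}^2\leq C\int_0^T\norm{\gamma z}_{H_D^2}^2\,dt+C\int_0^T\norm{z}_{H^1}^2\,dt.
\]

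\textbf{Step 2 (compactness-uniqueness).} Assume \eqref{cor:ineq:obs-plate-1} fails. Then there exists a sequence of solutions $z^n$ with $\norm{(z_0^n,z_1^n)}_{H_D^2\times L^2}=1$ and $\int_0^T\norm{\gamma z^n}_{H_D^2}^2\,dt\to 0$, while by Step 1 $\int_0^T\norm{z^n}_{H^1}^2\,dt$ is bounded below. The uniform bound in $C^0([0,T],H_D^2)\cap C^1([0,T],L^2)$ and Aubin--Lions compactness extract a subsequence converging in $C^0([0,T],H_D^{2-\veps})$ to a nonzero $z^\infty$ solving \eqref{eq:obspotential} with $\gamma z^\infty\equiv 0$ on $(0,T)\times\M$; since $\gamma\geq\gamma_0$ on $\omega$, this means $z^\infty\equiv 0$ on $(0,T)\times\omega$.

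\textbf{Step 3 (unique continuation for the linear plate with potential).} It remains to show that any solution of \eqref{eq:obspotential} vanishing on $(0,T)\times\omega$ is identically zero. Since $V$ is time-independent, $P:=\Delta_g^2+\beta+V$ is self-adjoint on $L^2(\M)$ with compact resolvent. After a harmless additive shift of the potential making all eigenvalues $\mu_k$ positive, expanding $z^\infty$ in the spectral basis $\{\psi_k\}$ of $P$ writes each modal coefficient as $a_k\cos(\sqrt{\mu_k}t)+b_k\sin(\sqrt{\mu_k}t)$, so $z^\infty$ is analytic in $t$ and almost-periodic. The vanishing on $(0,T)\times\omega$ therefore extends to $\R\times\omega$, and a Fourier/Ingham decoupling in time forces $\psi_k|_\omega=0$ for every mode appearing with nonzero coefficient. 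Unique continuation for the fourth-order elliptic eigenvalue problem $(P-\mu_k)\psi_k=0$, available from \cite{FLL24}, then gives $\psi_k\equiv 0$ and hence $z^\infty\equiv 0$, closing the contradiction. The principal difficulty lies precisely in this last step; the weak-observability derivation and the compactness argument are routine once \cref{prop:obs-plate} is in hand.
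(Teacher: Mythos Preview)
Your Steps 1 and 2 are sound and match the paper's route; the paper simply invokes \cite[Corollary 1.3, Remark 1.5]{DO18}, which packages exactly this compactness-uniqueness reduction under a compact perturbation. The difficulty is Step 3, where your argument has a genuine gap.

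The assertion that $z^\infty$ is analytic in $t$ is false for data merely in $H_D^2\times L^2$: each modal time-function is entire, but the spectral series $\sum_k c_k(t)e_k$ diverges off the real axis, since the modal functions grow like $\exp(|\operatorname{Im} t|\sqrt{\mu_k})$ while the Fourier coefficients of $H_D^2$ data decay only polynomially in $k$. Hence the extension of the vanishing from $(0,T)\times\omega$ to $\R\times\omega$ is not justified. The ``harmless additive shift'' is also incorrect (it changes the equation that $z^\infty$ solves), and an Ingham-type decoupling would require a uniform spectral gap for $\{\sqrt{\mu_k}\}$ that need not hold on a general manifold. Nor can you apply \cref{thm:UCP-linear-plate} directly to $z^\infty$, since you only have $C^0H_D^2\cap C^1L^2$ regularity, short of the $H^1_tH^3_x$ hypothesis there. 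The paper bypasses all this via the Fattorini--Hautus reduction built into \cite{DO18}: failure of observability produces a nontrivial finite-dimensional invisible subspace stable under the generator $\bA+\boldsymbol{V}$, hence containing an eigenvector $(v_\lambda,w_\lambda)$ with $\gamma v_\lambda=0$; elliptic bootstrap then places $v_\lambda\in H^3\cap H_D^1$, and \cref{thm:UCP-linear-plate} applied to the time-constant function $v_\lambda$ forces $v_\lambda\equiv 0$. That eigenfunction detour is precisely what supplies the spatial regularity your direct spectral approach lacks.
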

\begin{proof}
        Given that the addition of the potential is a compact perturbation, we can deduce the observability estimate \eqref{cor:ineq:obs-plate-1} from the free observability estimate \eqref{thm:ineq:obs-plate-1bis}, which holds whenever $\partial\M$ is empty or not. We briefly sketch the proof, based on a slight variation of the abstract result \cite[Corollary 1.3, Remark 1.5]{DO18}, where exact controllability can be replaced by observability, according to the duality framework given by the HUM method (see \cref{NLP:s:Lcontrol} below). Indeed, they provide the necessary arguments to remove the compact term in the proof of the observability inequality in the already classical compactness-uniqueness argument under the presence of a compact perturbation. They rely on the observability of the linear system and a unique continuation result for eigenfunctions, which they refer to as Fattorini-Hautus test.        
        
        In what follows we check the required hypothesis. If $Z:=(z, \partial_t z)\in C^0([0, T], X)$ then  \eqref{eq:obspotential} can be written as a first order system
    \begin{align*}
        \left\{\begin{array}{cc}
            \partial_t Z=(\bA+\boldsymbol{V})Z,  & t\in (0, T)  \\
            Z(0)=Z_0,     & 
        \end{array}\right.
    \end{align*}
    where $\boldsymbol{V}$ is given by
    \begin{align*}
        \boldsymbol{V}:=\begin{pmatrix}
            0 & 0 \\ -V(x) & 0
        \end{pmatrix}.
    \end{align*}
    First, by \cref{prop:obs-plate} observability holds for any $T>T_0$ for $\bA$ with observation operator $\bC(\psi_1, \psi_2)=\gamma(x)\psi_1$. Second, note that $\bA_{\boldsymbol{V}}:=\bA+\boldsymbol{V}$ satisfies $D(\bA_{\boldsymbol{V}})=D(\bA)$ and that $\boldsymbol{V}\in \mc{L}(X)$ is a compact operator. Then, it only remains to check the unique continuation property for eigenfunctions of $\bA_{\boldsymbol{V}}$: if $\bA_{\boldsymbol{V}}(v_\ld, w_\ld)=\ld(v_\ld, w_\ld)$ and $\bC(v_\ld, w_\ld)=0$ with $(v_\ld, w_\ld)\in X$ and $\ld\in \mathbb{C}$, then $(v_\ld, w_\ld)=(0, 0)$. On the one hand, the condition $\bA_{\boldsymbol{V}}(v_\ld, w_\ld)=\ld(v_\ld, w_\ld)$ leads to
    \begin{align*}
        \left\{\begin{array}{cl}
            -(\Delta_g^2+\beta+V(x)\big)v_\ld=\ld^2 v_\ld,     & x\in\M, \\
            v_\ld=0,     & x\in\omega.
        \end{array}\right.
    \end{align*}
    By elliptic regularity and bootstrap, we can show that $v_\ld\in H^3\cap H_D^1(\M)$. In particular, we can apply the unique continuation for (evolution) plate operators \cref{thm:UCP-linear-plate} below to show that $v_\ld\equiv 0$. On the other hand, for $\ld\neq 0$, we have $w_\ld=\ld v_\ld=0$, and for $\ld=0$ we directly obtain $w_\ld=0$. This proves that linear observability holds in time $T>T_0$ for $\bA_{\boldsymbol{V}}$ with observation operator $\bC$, that is, estimate \eqref{cor:ineq:obs-plate-1} holds.
\end{proof}

\begin{remark}
    Regarding the unique continuation for eigenfunctions of the bi-Laplace operator, we refer to Le Rousseau and Robbiano \cite{LeRR20} and the references therein, where more complicated situations are addressed.
\end{remark}

\subsection{About the stabilization strategy} The following well-known result gives us a criterion to establish the nonlinear energy decay for defocusing solutions of \eqref{eq:nlp-stab} as stated in the introduction.

\begin{lemma}\label{lem:ED-equiv}
    Let $f$ satisfy the defocusing assumption \eqref{NLP:defdefocusing}. The property (ED) holds if and only if there exists $T>0$ such that for every $E_0$, there exists a constant $C>0$ such that
    \begin{align}\label{ineq:ED-equiv}
        E(U(0))\leq C\big(E(U(0))-E(U(T))\big)=C\iint_{[0,T]\times\M} \gamma^2(x)|\partial_t u(t, x)|^2dxdt
    \end{align}
    for all solutions $u$ of \eqref{eq:nlp-stab} with $E(U(0))\leq E_0$.
\end{lemma}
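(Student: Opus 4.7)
The plan is to establish both implications using two simple ingredients: the energy identity \eqref{eq:energyidentity}, which makes the integral on the right-hand side of \eqref{ineq:ED-equiv} equal to $E(U(0))-E(U(T))$ for any solution of \eqref{eq:nlp-stab}, and the fact that under \eqref{NLP:defdefocusing} we have $V(x,u)\geq 0$ (since the sign of $f(x,\cdot)$ matches that of its argument), so $E\geq 0$ and in fact $E(U)\geq \tfrac12\|U\|_X^2$. Together with \eqref{eq:energyidentity}, the map $t\mapsto E(U(t))$ is nonnegative and non-increasing.

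For the direct implication, I would simply note that if (ED) holds, then for the fixed $E_0$ one chooses $T=T(E_0)$ large enough so that $Ce^{-\lambda T}\leq \tfrac12$. Any solution with $E(U(0))\leq E_0$ then satisfies $E(U(T))\leq \tfrac12 E(U(0))$, hence
\begin{align*}
E(U(0))\leq 2\bigl(E(U(0))-E(U(T))\bigr)=2\iint_{[0,T]\times\M}\gamma^2(x)|\partial_t u|^2\,dx\,dt,
\end{align*}
by the energy identity, which is \eqref{ineq:ED-equiv} with constant $2$.

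The converse is the standard iteration argument. Given $E_0$, let $C=C(E_0)$ be as in \eqref{ineq:ED-equiv}. Rewriting the identity as $E(U(0))\leq C(E(U(0))-E(U(T)))$ immediately yields
\begin{align*}
E(U(T))\leq \alpha\, E(U(0)),\qquad \alpha:=1-\tfrac{1}{C}\in(0,1).
\end{align*}
Now, because $E$ is non-increasing, $E(U(T))\leq E(U(0))\leq E_0$, so the very same constant $C(E_0)$ is valid when we apply \eqref{ineq:ED-equiv} to the translated solution $u(\cdot+T)$. Iterating, $E(U(nT))\leq \alpha^n E(U(0))$ for every $n\in\N$. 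For arbitrary $t\geq 0$, writing $t=nT+s$ with $s\in[0,T)$ and using once more the monotonicity of $E$ gives $E(U(t))\leq \alpha^n E(U(0))\leq \alpha^{-1}e^{-\lambda t}E(U(0))$ with $\lambda:=-\log(\alpha)/T>0$, which is precisely (ED).

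There is no real obstacle here: the only things to check carefully are the nonnegativity of the nonlinear energy (which uses \eqref{NLP:defdefocusing}) and that the $E_0$-dependent constant $C(E_0)$ can be reused at each iteration step, which is guaranteed by the dissipation inequality $\partial_t E\leq 0$.
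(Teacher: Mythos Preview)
Your proof is correct and follows essentially the same approach as the paper: both directions rely on the energy identity and the monotonicity of $E$, with the converse obtained by the standard iteration $E(U(nT))\leq \alpha^n E(U(0))$ for a contraction factor $\alpha<1$, then interpolating between multiples of $T$. The only cosmetic differences are that the paper writes the contraction factor as $\tfrac{C}{C+1}$ rather than your (sharper) $1-\tfrac{1}{C}$, and in the direct implication the paper takes $T$ with $1-Ce^{-\lambda T}>0$ while you take $Ce^{-\lambda T}\leq \tfrac12$; your explicit remark that the same constant $C(E_0)$ can be reused at each step because $E(U(T))\leq E_0$ is a useful clarification that the paper leaves implicit.
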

\begin{proof}
    If (ED) holds, then for $T>0$ large enough so that $1-Ce^{-\ld T}>0$,
    \begin{align*}
        E(U(0))-E(U(T))\geq \big(1-Ce^{-\ld T}\big)E(U(0)).
    \end{align*}
    If \eqref{ineq:ED-equiv} holds, using $E(U(T))\leq E(U(0))$, we get directly that
    \begin{align*}
        E(U(T))\leq \dfrac{C}{C+1}E(U(0)).
    \end{align*}
    Consequently,
    \begin{align*}
        E(U(kT))\leq \dfrac{C}{C+1}E(U((k-1)T))\leq \left(\dfrac{C}{C+1}\right)^2E(U((k-2)T))\leq \ldots \leq C_1^kE(U(0)),
    \end{align*}
    with $C_1<1$. If $t\in [kT, (k+1)T)$, observing that $k=\lfloor\tfrac{t}{T}\rfloor>\tfrac{t}{T}-1$ and $\ln\tfrac{1}{C_1}>0$, we get
    \begin{align*}
        C_1^k=\exp(k\ln C_1)=\exp(-k\ln\tfrac{1}{C_1})\leq \exp\big(-\ln\tfrac{1}{C_1}-\tfrac{t}{T}\ln\tfrac{1}{C_1}\big)=\dfrac{1}{C_1}\exp\big(-\frac{\ln\tfrac{1}{C_1}}{T}t\big).
    \end{align*}
    Consequently, for some constants $C>0$ and $\ld>0$, we get
    \begin{align*}
        E(U(t))\leq E(U(kT))\leq Ce^{-\ld t}E(U(0)).
    \end{align*}
\end{proof}

This link in between exponential stabilization and observability inequalities was first introduced by Haraux \cite{Har89} for second order linear systems. In view of this strategy and following the same strategy of proof of \cref{cor:obs-plate}, we get the observability inequality that leads to the decay of the linear plate with localized damping.

\begin{proposition}\label{prop:lineardecay}
    If $\omega$ satisfies \cref{assumOBS}, then there exists two positive constants $C>0$ and $\ld>0$ such that for any $\sigma\in[0, 1]$ and $t\geq 0$,
    \begin{align*}
        \norm{e^{t(\bA+\bB)}}_{\mc{L}(X^\sigma)}\leq Ce^{-\ld t}.
    \end{align*}
\end{proposition}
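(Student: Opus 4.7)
The proof plan is to first reduce exponential decay on $X = X^0$ to an observability inequality for the damped linear plate via Haraux's equivalence, establish that observability by mimicking the argument of \cref{cor:obs-plate}, and then bootstrap to $X^\sigma$ for $\sigma\in(0,1]$ by commutation with the generator combined with interpolation.

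For the decay on $X$, the dissipation identity $\tfrac{d}{dt}\|U(t)\|_X^2 = -2\int_\M \gamma^2|\partial_t u|^2\,dx$ together with the classical Haraux argument (the linear counterpart of \cref{lem:ED-equiv}) shows that exponential decay is equivalent to the existence of $T,C>0$ such that
\begin{align*}
\|U_0\|_X^2 \leq C\int_0^T\int_\M \gamma^2|\partial_t u|^2\,dxdt
\end{align*}
for every solution of the damped linear plate. I would prove this observability for the first-order system $\partial_t U = (\bA+\bB)U$ with observation $\bC U = \gamma\partial_t u$ by following the scheme of \cref{cor:obs-plate}, adapted to accommodate the bounded dissipative $\bB$ in place of the compact potential $\boldsymbol V$ used there. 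The two needed ingredients are: (i) observability of the free operator $\bA$ for the same observation operator, which follows from \cref{prop:obs-plate} applied to $\partial_t z$, itself a free-plate solution with initial data $(z_1, -\Delta_g^2 z_0 - \beta z_0)$; and (ii) a Fattorini-Hautus unique continuation test for eigenfunctions of $\bA + \bB$: the relation $(\bA+\bB)(v,w) = \ld(v,w)$ forces $w = \ld v$ and $(\Delta_g^2 + \beta + \ld\gamma^2 + \ld^2)v = 0$, and the observation condition $\gamma w = 0$ gives $\gamma v = 0$ when $\ld\neq 0$, so the unique continuation for biharmonic operators with lower order terms used in \cref{cor:obs-plate} forces $v\equiv 0$; the case $\ld = 0$ reduces to $\Delta_g^2 v + \beta v = 0$ with hinged boundary conditions, which after multiplying by $v$ and integrating by parts gives $v \equiv 0$ thanks to $\beta \geq 0$ (with $\beta>0$ required when $\partial\M = \emptyset$).

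Once decay is established on $X$, the passage to $X^\sigma$ is a soft abstract fact: since $\bB\in\mathcal{L}(X)$, $D(\bA+\bB) = D(\bA) = X^1$ with equivalent graph norms, and from the commutation $(\bA+\bB)e^{t(\bA+\bB)}U_0 = e^{t(\bA+\bB)}(\bA+\bB)U_0$ the decay on $X$ transfers to $X^1$ at the same rate. The identification $X^\sigma = [X, X^1]_\sigma$, which follows componentwise from $[D(A_0^\alpha), D(A_0^\beta)]_\theta = D(A_0^{(1-\theta)\alpha + \theta\beta})$ for the self-adjoint operator $A_0 = -\Delta_g$, together with standard interpolation of operator norms, completes the proof. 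The main obstacle is the observability step itself: the bounded but \emph{non-compact} perturbation $\bB$ is not directly covered by the framework of \cite{DO18} as invoked in \cref{cor:obs-plate}, so either one adapts that framework to dissipative perturbations, or one runs compactness-uniqueness by hand — extracting via Aubin-Lions a strongly convergent subsequence in $C([0,T], X^{-\varepsilon})$, identifying the weak limit as a free-plate solution vanishing on $[0,T]\times\omega$ (killed by the linear unique continuation above), and upgrading weak to strong convergence via the Duhamel splitting $u_n = z_n + r_n$, with $z_n$ the free solution with data $U_n(0)$ and $r_n$ absorbing the source $-\gamma^2\partial_t u_n$, combined with the free observability of \cref{cor:obs-plate}.
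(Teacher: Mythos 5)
Your proposal is correct and takes essentially the same route the paper indicates: reduce to an observability inequality via the Haraux equivalence, obtain it following the scheme of \cref{cor:obs-plate}, then get $\sigma=1$ by energy/commutation estimates and conclude by interpolation. One remark: the non-compactness of $\bB$ that worries you is harmless and requires neither an adaptation of \cite{DO18} nor a compactness--uniqueness argument, since the observation $\gamma\partial_t u$ controls the perturbation $\bB U=(0,-\gamma^2\partial_t u)$ pointwise in time, so the Duhamel correction to the free evolution is bounded by the observation term and the damped observability follows directly from the free one.
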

\begin{remark}
    The full range of exponents $\sigma\in [0, 1]$ is obtained by using suitable energy estimates to establish the case $\sigma=1$ and then linear interpolation.
\end{remark}

\section{Unique continuation and equilibriums}\label{sec:ucp}

As stated in the introduction, a crucial property on which the technique relies is a unique continuation property for the nonlinear plate equation \eqref{eq:nlp}. It is built upon two blocks. The first one is a recent result from \cite{FLL24}, stating a unique continuation property for the linear plate equation with time-dependent lower order terms. Actually, this result is stated as a local unique continuation property. However, as pointed out in by the authors, such result can be globalized by propagation through a suitable family of hypersurfaces, whose construction can be found in \cite[Proof of Theorem 6.7]{LL19}. The global version is stated below.

\begin{theorem}{\cite[Section 1.3.5]{FLL24}}\label{thm:UCP-linear-plate}
    Let $T>0$ and $\M = \Int\M\sqcup \partial\M$ be a connected smooth manifold with or without boundary $\partial\M$.
	Suppose that $g \in W^{3,\infty}_{\loc}(\Int(\M))$ is a Riemannian metric on $\Int\M$, that $\mathsf{q} \in \mc{G}^2((0,T); L^\infty_{\loc}(\Int\M;\mathbb{C}))$, that $\mathsf{b}$ is a one form with all components belonging to $\mc{G}^2((0,T); L^\infty_{\loc}(\Int\M;\mathbb{C}))$, 
	and consider the differential operator 
	\begin{align*}
	   \mathcal{T}_{\mathsf{b},\mathsf{q}}:=\partial_t^2+\Delta_g^2+\mathsf{b}\cdot\nabla_g+\mathsf{q}(t,x) ,
	\end{align*}
	where $\Delta_g$ is the Laplace-Beltrami operator on $\Int\M$, $\nabla_g$ the Riemannian gradient.

	Then given  $\omega$ a nonempty open set of $\M$,  we have 
	\begin{align*}
	\begin{cases}
		\mathcal{T}_{\mathsf{b},\mathsf{q}}  u = 0  \text{ in }(0,T)\times \Int\M \\  u \in H^1_{\loc}(0,T;H^3_{\loc}(\Int\M))  \\ u =0 \text{ in } (0,T)\times \omega    
	\end{cases}\ \Longrightarrow \ u = 0 \text{ in } (0,T)\times \Int\M .
	\end{align*}
\end{theorem}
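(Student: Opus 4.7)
The plan is to deduce the global statement by iterating the local unique continuation result of \cite{FLL24} along a carefully chosen one-parameter family of hypersurfaces sweeping out $(0,T)\times\Int\M$, following the propagation scheme developed in \cite[Proof of Theorem 6.7]{LL19}. The local version from \cite{FLL24} asserts that if $u$ vanishes on one side of a non-characteristic oriented hypersurface $S$ passing through a point $(t_0,x_0)\in(0,T)\times\Int\M$, then $u$ vanishes on a full two-sided neighborhood of $(t_0,x_0)$. This is proved via Carleman estimates for $\mathcal{T}_{\mathsf{b},\mathsf{q}}$ crucially exploiting the Gevrey-2 regularity in time of the coefficients, a hypothesis that bypasses the obstruction to applying Tataru / Robbiano--Zuily / H\"ormander-type theorems to the plate operator (see \cite[Appendix B]{FLL24}).

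The first step is to reduce the theorem to this local building block. Fix an arbitrary $(t_*,x_*)\in(0,T)\times\Int\M$ that I want to reach from the observation set $(0,T)\times\omega$. Using connectedness of $\Int\M$ together with the fact that the time variable provides an extra free direction, I would construct a continuous family $\{S_s\}_{s\in[0,1]}$ of smooth compact hypersurfaces of $(0,T)\times\Int\M$ together with an associated family of open sets $\Omega_s$ (the ``already vanishing'' sides) such that: $\Omega_0$ is contained in $(0,T)\times\omega$; $(t_*,x_*)\in\Omega_1$; each $S_s$ is non-characteristic for $\mathcal{T}_{\mathsf{b},\mathsf{q}}$ in the sense required by the local theorem; and the $S_s$ stay uniformly away from $\{0,T\}\times\M$ and from $\mathbb{R}\times\partial\M$. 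The concrete construction would use level sets of a smoothly varying function, e.g.\ $\varphi_s(t,x)= (t-t_s)^2/\rho^2 + d_g(x,\gamma(s))^2 - r_s^2$ deformed along a path $\gamma:[0,1]\to\Int\M$ joining $\omega$ to $x_*$, with the time-weight $\rho$ chosen large enough to guarantee non-characteristicity (the plate symbol is elliptic in $x$ and of order two in $t$, so the non-characteristic condition is mild).

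Then I would argue by a standard connectedness/compactness step: the set $J:=\{s\in[0,1] : u\equiv 0 \text{ on } \Omega_s\}$ is nonempty (it contains $0$), closed by continuity of the family, and open by the local UCP (each small increment $\Omega_s\subset\Omega_{s+\delta}$ is absorbed by finitely many applications of the local theorem along $S_s$, using the regularity $u\in H^1_{\loc}(0,T;H^3_{\loc}(\Int\M))$ to validate the hypotheses of \cite{FLL24}). Hence $J=[0,1]$ and $u(t_*,x_*)=0$. Since $(t_*,x_*)$ was arbitrary in $(0,T)\times\Int\M$, this gives $u\equiv 0$ there.

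The main obstacle will be organizing the family $\{S_s\}_{s\in[0,1]}$ globally so that the non-characteristic condition holds uniformly and the surfaces avoid $\partial\M$ when the latter is nonempty: near $\partial\M$ one must tilt the hypersurfaces to remain strictly interior while still covering every interior point in the limit. A secondary subtlety is that, because the coefficients $\mathsf{b},\mathsf{q}$ are only Gevrey-$2$ in $t$ and $L^\infty_{\loc}$ in $x$, the local steps must be iterated finitely many times on each compact piece so as not to degrade regularity, which is handled by the fact that vanishing of $u$ on an open set is stable under finite unions. These ingredients together promote the local statement of \cite{FLL24} to the global conclusion claimed in the theorem.
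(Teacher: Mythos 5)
Your proposal follows exactly the route the paper takes: this theorem is quoted from \cite{FLL24}, and the paper itself only remarks that the local unique continuation result there is globalized by propagation through a suitable family of non-characteristic hypersurfaces constructed as in \cite[Proof of Theorem 6.7]{LL19}, which is precisely the scheme you outline. Your sketch of the sweeping family and the open-closed connectedness argument is consistent with that reference, so there is nothing to add.
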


\begin{remark}\label{rk:gevreyanalytic}
    For a Banach space $Y$, from the definition of $s$-Gevrey spaces it follows that any analytic function $f: (0, T)\to Y$ also belongs to $\mc{G}^s((0, T), Y)$ for $s>1$, see \cite[Definition 1.1]{FLL24}.
\end{remark}

The second block is a propagation of regularity result, or to be more precise, propagation of analyticity in time for solutions which are observed to be analytic in some region from where linear observability holds.

\begin{theorem}\label{thm:analytic-nlp}\cite[Theorem 1.6]{LL24}
    Let $(\M, g)$ be a compact connected manifold with (or without) boundary of dimension $d\leq 3$. Let $\widetilde{\omega}\subset \M$ be a nonempty open set and suppose that there exists $\omega\Subset\widetilde{\omega}$ satisfying \cref{assumOBS} for some $T_0>0$. Assume that $f: \M\times\R\to\R$ is smooth and real analytic on the second variable. If $T>T_0$, let $(u, \partial_t u)\in C^0([0, T], H_D^2\times L^2)$ be a solution of
    \begin{align}\label{eq:nlp-1intro}
        \left\{\begin{array}{rl}
            \partial_t^2 u+\Delta^2 u+\beta u+f(x, u)=0  &\ (t, x)\in [0, T]\times \Int(\M),\\
            u_{|_{\partial\M}}=\Delta u_{|_{\partial\M}}=0  &\ (t, x)\in [0, T]\times\partial\M,
            \end{array}\right.
    \end{align}
    so that for any cutoff function $\chi\in C_c^\infty(\M)$ whose support is contained in $\widetilde{\omega}$, then $t\in (0, T)\mapsto \chi u(t,\cdot)\in H^{3+\veps}(\M)\cap H_D^1(\M)$ is analytic for one $\veps>0$. Then $t\in (0, T)\mapsto \big(u(t, \cdot), \partial_t u(t,\cdot)\big)\in H^4\cap H_D^1\times H^2\cap H_D^1$ is analytic.
\end{theorem}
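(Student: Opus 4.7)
The plan is to upgrade the time-analyticity of $\chi u$ on the observation region $\widetilde{\omega}$ to global time-analyticity of $(u,\partial_t u)$. The engine is the linear plate observability of \cref{prop:obs-plate}, applied iteratively to the sequence of time-derivatives $v_k:=\partial_t^k u$. Each $v_k$ solves a linear plate equation with source $-\partial_t^k f(x,u)$, so observability transfers the Gevrey-$1$ (i.e.\ analytic) growth of the observed quantity on $\widetilde{\omega}$ to a global Gevrey-$1$ bound on $\M$, provided the source term can be controlled thanks to the analyticity of $f$ in its second argument.

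First, differentiating the equation $k$ times and using Faà di Bruno, $\partial_t^k f(x,u)$ is a finite combinatorial sum over partitions of $k$ of terms $\prod_i v_{j_i}\cdot(\partial_s^m f)(x,u)$, where analyticity of $f(x,\cdot)$ supplies Cauchy bounds $|\partial_s^m f(x,u)|\leq C\, m!/R^m$ uniformly on the bounded range of $u$ in $L^\infty(\M)$ available from $H_D^2\hookrightarrow L^\infty$ in $d\leq 3$. Second, I apply \cref{prop:obs-plate} to $v_k$ at a Sobolev level $s\leq 1+\veps$, paired with a Duhamel representation for the source: this controls $\|(v_k,v_{k+1})(0)\|_{H^{2+s}_D\times H^s_D}$ by the observation $\|b_\omega\Delta_g v_k\|_{L^2_t H^s_D}$, which by hypothesis is itself dominated by $C_0 k!/\rho_0^k$ since $t\mapsto\chi u(t)\in H^{3+\veps}$ is analytic, plus the $L^1_t H^s_D$ norm of the Faà di Bruno source. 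Third, I induct on $k$ with ansatz $\|v_j\|_{L^\infty_t H^{2+s}_D}\leq C_1 j!/\rho_1^j$ for $j<k$; the algebra property of $H^{2+s}_D$ in $d\leq 3$ and the standard convolution lemma for Gevrey-$1$ sequences then bound the source by a Gevrey-$1$ sequence of the same type, closing the induction for $\rho_1$ small enough relative to the analytic radius of $f$ and to the time window. A final short bootstrap using the equation $\Delta_g^2 u=-\partial_t^2 u-\beta u-f(x,u)$ (and its time-differentiated version applied to $\partial_t u$) lifts the regularity from $H^{3+\veps}\times H^{1+\veps}$ to the target $H^4\cap H_D^1\times H^2\cap H_D^1$.

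The principal obstacle is the combinatorial closure of the Gevrey-$1$ induction: the $m!$ from $\partial_s^m f$, the combinatorics of the partitions of $k$, and the factorials arising from products of individual time-derivatives all need to consolidate into the single target $k!/\rho_1^k$. The convolution bound for Gevrey sequences is tight, and one typically compensates by taking $\rho_1\ll\rho_0$ together with a subdivision of $[0,T]$ into short windows over which the estimate is propagated by repeated application of \cref{prop:obs-plate}. A secondary subtlety is matching Sobolev scales: the observability loses two derivatives, and the hypothesis at level $H^{3+\veps}$ has to be tuned with the observation index $s\leq 1+\veps$ to feed the iteration without breaking it, with the final elliptic bootstrap absorbing the remaining gap to $H^4\times H^2$.
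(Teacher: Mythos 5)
First, a point of order: the paper does not prove this statement — it is imported verbatim from \cite[Theorem 1.6]{LL24}, and the only "proof content" here is the remark explaining how to accommodate the extra $x$-dependence of $f$. So there is no in-paper proof to compare against. What can be said is that the paper explicitly describes the proof in \cite{LL24} as a finite-time adaptation of the Hale--Raugel method \cite{HR03}: one sets up a fixed-point problem in a space of functions holomorphic in a complex neighbourhood of the time interval, uses the observability inequality to show that the solution is (analytically) determined by the observed part plus a smoothing remainder coming from the compactness of $F$ (\cref{prop:regnonlinearity}), and transfers the analyticity of $t\mapsto\chi u(t)$ to the whole trajectory. Your proposal — a term-by-term Gevrey-$1$ induction on $v_k=\partial_t^k u$ via Fa\`a di Bruno and iterated applications of \cref{prop:obs-plate} — is a genuinely different route.

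As written, however, the proposal has two gaps that are not merely technical. The first is a circularity in the induction: the solution is only known to lie in $C^0([0,T],H_D^2\times L^2)$, so from the equation $\partial_t^2u=-\Delta_g^2u-\beta u-f(x,u)$ the higher time derivatives $v_k$ a priori live in increasingly \emph{negative} Sobolev spaces. Your ansatz $\norm{v_j}_{L^\infty_tH^{2+s}_D}\leq C_1 j!/\rho_1^j$ presupposes exactly the qualitative regularity ($u\in C^\infty((0,T),H^{2+s}_D)$, indeed the gain from $H^2$ to $H^4$) that the theorem is meant to produce; you cannot write the Duhamel formula for $v_k$ or invoke \cref{prop:obs-plate} at level $s$ before establishing, by a separate finite-order bootstrap (observability gains two derivatives per step, propagated from one time to the whole interval by energy estimates, on subintervals of length $>T_0$ inside $(0,T)$), that each $v_k$ actually belongs to the relevant space. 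That preliminary bootstrap is a substantial piece of the argument and is absent. The second gap is the one you yourself flag as "the principal obstacle": the consolidation of the $m!$ from the Cauchy bounds on $\partial_s^mf$, the partition combinatorics, and the products of lower-order factorials into a single $k!/\rho_1^k$ bound. This is precisely where naive inductions of this type fail (the constant degrades at each step), and the standard repairs — majorant series, or summed analytic norms of the form $\sum_k\rho^k\norm{v_k}/k!$ treated as a fixed point — are essentially equivalent to the holomorphic-extension argument you are trying to avoid. Identifying the obstacle is not the same as overcoming it, so the proof is not complete as it stands.
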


\begin{remark}
    Observe that in \cite{LL24} this result is not written for $f$ with an extra $x$-dependency. However, as long as $f$ is smooth with respect to $x$, the adaptation is straightforward as we only need to ensure that \cite[Assumption 3]{LL24} is verified. The latter follows directly by using the nonlinear estimates \cref{app:lem:NLcomposition} as indicated in \cite[Proposition 3.2, Proposition 4.1]{LL24}.
\end{remark}

This result can be viewed as a finite-time adaptation of abstract results by Hale and Raugel \cite{HR03} in the concerning the regularity of compact attractors. It is likely that analyticity in infinite time holds true after Hale and Raugel's results. However, until now \cref{thm:UCP-NLP} seemed to be hard to obtain because, for instance, no unique continuation result for linear plates was available under non-restrictive geometric assumptions. In this direction, the recent result \eqref{thm:UCP-linear-plate} of \cite{FLL24} underpins the importance of our result about propagation of analyticity in finite time. This property is the crucial not only because it enables us to use unique continuation for linear plates \cref{thm:UCP-linear-plate}, but also for deriving the corresponding observability inequality for the nonlinear equation.

With these two results at hand, we are in position to prove the unique continuation property stated in the introduction.

\begin{proof}[Proof of \cref{thm:UCP-NLP}]
    Let $U(t)=(u(t), \partial_t u(t))$ be a solution of \eqref{eq:nlp} in $C^0([0, T], X^\sigma)$. By assumption $\omega\Subset\widetilde{\omega}$ and $\omega$ satisfies \cref{assumOBS}. Let $\chi\in C^{\infty}_c(\M)$ be a smooth compactly supported function in $\widetilde{\omega}$ such that $\chi=1$ on $\omega$ and $\partial_{\vec{n}}\chi=0$ on $\partial\M$. By hypothesis, in $[0, T]\times\widetilde{\omega}$ we have $\partial_t u=0$ and thus
    \begin{align*}
        \Delta_g^2(\chi u)+\beta(\chi u)+\chi f(x, u)=[\Delta_g^2, \chi]u.
    \end{align*}
    By elliptic regularity and bootstrap (see, for instance \cite[Proposition 9.19]{GT01}), we readily get that, for some small $\veps>0$, the application $t\in (0, T)\mapsto \chi u(t,\cdot)\in H^{3+\veps}(\M)\cap H_D^1(\M)$ is well-defined and analytic, as it does not depend on $t$. Thus \cref{thm:analytic-nlp} applies and we get that $t\in (0, T)\mapsto U(t)\in H^4\cap H_D^1\times H_D^2$ is analytic.

    Since $u$ is smooth with respect to $t$ and $f$ is smooth in both variables, by writing
    \begin{align*}
        (\Delta_g^2+\beta) u=-\partial_t^2 u-f(x, u)
    \end{align*}
    we get that $\Delta_g^2 u\in L^2(\M)$ and so $u\in H^4(\M)$. We differentiate the above equation to obtain
    \begin{align*}
        (\Delta_g^2+\beta)^2u&=(\Delta_g^2+\beta)(-\partial_t^2 u-f(x, u))=-\partial_t^2 (\Delta_g^2+\beta) u-(\Delta_g^2+\beta)f(x, u)\\
        &=\partial_t^4 u+\partial_sf(x, u)\partial_t^2 u+\partial_s^2 f(x, u)(\partial_t^2 u)^2-(\Delta_g^2+\beta)f(x,u)
    \end{align*}
    which shows that $u$ belongs to $H^8(\M)$. This process can be repeated as many times as wanted, so by classical Sobolev embedding we get that $u=u(t, x)$ is smooth with respect to $x$. In particular, $(t, x)\in (0, T)\times \Int(\M)\mapsto u(t, x)$ belongs to $C^\infty\big((0, T)\times \Int(\M))$.
    
    Set $z=\partial_t u$ and observe that $z$ solves
    \begin{align}\label{thm:proof:eq:nlw-unique-cont}
        \left\{\begin{array}{rl}
            \partial_t^2 z+\Delta_g^2 z+\beta z+f_s'(x, u)z=0  &\ (t, x)\in [0, T]\times \text{Int}(\M),\\
            z_{_{\partial\M}}=\Delta z_{|_{\partial\M}}=0            &\ (t, x)\in [0, T]\times\partial\M,\\
            z=0 &\ (t, x)\in [0, T]\times \omega.
        \end{array}\right.
    \end{align}
    By the previous discussion $\mathsf{q}: (t, x)\mapsto f'(x, u(t, x))$ is bounded, is not only smooth in $x$ but also analytic in $t$ and in particular, Gevrey $2$ in time, see \cref{rk:gevreyanalytic}. As $u$ is smooth in both variables, so is $z$ and we can apply \cref{thm:UCP-linear-plate} to get that $z\equiv 0$ everywhere. This in turn means that $u(t, x)=u(x)$ is constant in time and henceforth it solves
    \begin{align}
        \left\{\begin{array}{rl}
            \Delta_g^2 u+\beta u+f(x, u)=0  &\ x\in\text{Int}(\M),\\
            u=\Delta u=0            &\ x\in \partial\M.
        \end{array}\right.
    \end{align}
    Moreover, multiplying the latter equation by $u$ and integrating by parts leads us to the identity
    \begin{align*}
        0\leq \int_\M \big(|\Delta_g u(x)|^2+\beta|u(x)|^2\big) dx=-\int_\M u(x)f(x, u(x)) dx.
    \end{align*}
    Under the assumption that $sf(x, s)\geq 0$ for all $(x, s)\in \M\times\R$, the above identity, the connectedness of $\M$ and the boundary condition imply that $u\equiv 0$ everywhere. In the case $\partial\M=\emptyset$, we get
    \begin{align*}
        0\leq  \int_\M \big(|\Delta_g u(x)|^2+\beta|u(x)|^2\big)dx=-\int_\M u(x)f(x, u(x)) dx\leq -\alpha\int_\M |u(x)|^2dx,
    \end{align*}
    which directly implies $u\equiv 0$ everywhere.
\end{proof}

\section{Stabilization and long time dynamics}\label{sec:dyn}

\subsection{Observability inequality} By \cref{lem:ED-equiv}, the stabilization result \cref{thm:NLP-decay} follows directly once the corresponding observability inequality is established, which is the content of the following result. We employ an strategy introduced by Dehman, Lebeau and Zuazua \cite{DLZ03}, where microlocal propagation tools and unique continuation are used in a key fashion to stabilize the subcritical wave equation in $\R^3$. Here, following a modification of these ideas for the wave equation introduced by Laurent and the author \cite[Section 3.5]{LL24}, we use the linear observability inequality and the compactness properties of the nonlinearity to perform the usual propagation of compactness argument. We point out that, for the subcritical wave equation, Joly and Laurent \cite{JL13} introduced a similar modification based on the decay of the linear semigroup, which led them to rely on a unique continuation property in infinite time. In our case, if we run such argument we might be able to obtain the observability inequality for some $T>0$. However, here we obtain the observability for any $T>T_0$ given that the Schrödinger equation is $L^2-$observable in time $T_0>0$. This further highlights the importance of the unique continuation property in finite time \cref{thm:analytic-nlp}.

\begin{theorem}\label{thm:nl-observ}
 Assume $\omega$ satisfies \cref{assumOBS} for some $T_0>0$. Assume that $f$ is analytic in the second variable and defocusing, that is, satisfying \eqref{NLP:defdefocusing}. Then, for any $T>T_0$ and $E_0>0$, there exists $C>0$ so that for any $(u_0,u_1)\in H_D^2 \times L^2(\M)$, with $\norm{(u_0,u_1)}_{H_D^2\times L^2}\leq E_0$, the unique solution of \eqref{eq:nlp-stab} satisfies
    \begin{align}\label{ineq:nl-observ}
        E(U(0))\leq C\iint_{[0,T]\times\M} \gamma^2(x)|\partial_t u(t, x)|^2dxdt.
    \end{align}
\end{theorem}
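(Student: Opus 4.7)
The plan is a compactness-uniqueness argument by contradiction, combining the linear observability for the damped plate (a consequence of \cref{prop:lineardecay}), the compactness of the nonlinearity (\cref{prop:regnonlinearity}), and the nonlinear unique continuation (\cref{thm:UCP-NLP}). Suppose the conclusion fails: there exists a sequence of initial data with $\alpha_n^2 := E(U_n(0)) \leq E_0$ such that
\begin{equation*}
    \iint_{[0,T]\times\M} \gamma^2 |\partial_t u_n|^2\,dx\,dt \leq \tfrac{1}{n}\alpha_n^2.
\end{equation*}
The defocusing assumption \eqref{NLP:defdefocusing} forces $V(x,s)\geq 0$, so $\alpha_n^2$ controls $\|U_n(0)\|_X^2$; together with the dissipation identity \eqref{eq:energyidentity}, $U_n$ is bounded in $C^0([0,T],X)$. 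Up to extraction, $\alpha_n \to \alpha \in [0,\sqrt{E_0}]$.

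\emph{Non-degenerate case $\alpha > 0$.} By \cref{prop:regnonlinearity}, $F(U_n)$ is relatively compact in $C^0([0,T], X^\sigma)$ for some $\sigma>0$; combining this with the equation $\partial_t U_n = (\bA+\bB)U_n + F(U_n)$ and an Aubin--Lions argument, up to subsequence $U_n \to U$ strongly in $C^0([0,T], X^{-\varepsilon})$ and weakly-$*$ in $L^\infty([0,T], X)$, so in particular $u_n \to u$ uniformly by Sobolev embedding. Passing to the limit and using $\iint \gamma^2 |\partial_t u_n|^2 \to 0$, the limit $U$ solves the damped nonlinear plate \eqref{eq:nlp-stab} with $\partial_t u \equiv 0$ on $[0,T]\times\omega$; by \cref{thm:UCP-NLP} under the defocusing hypothesis, $U\equiv 0$. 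To upgrade to strong convergence, I view $U_n$ as the solution of the \emph{linear} damped plate with source $-F(U_n)$: the Lipschitz estimate \eqref{ineq:FlipXsigma} together with $U_n \to 0$ in $C^0([0,T], X^{-\varepsilon})$ yields $F(U_n) \to 0$ strongly in $L^1([0,T], L^2(\M))$, and the observability inequality with source for the linear damped plate (standard HUM-duality consequence of \cref{prop:lineardecay}) gives
\begin{equation*}
    \|U_n(0)\|_X^2 \leq C\iint \gamma^2 |\partial_t u_n|^2\,dx\,dt + C \|F(U_n)\|_{L^1([0,T], L^2)}^2 \longrightarrow 0,
\end{equation*}
whence $\alpha_n\to 0$, contradicting $\alpha>0$.

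\emph{Degenerate case $\alpha = 0$.} Rescale $v_n := u_n/\alpha_n$; using smoothness of $V$ near $0$ and the $L^\infty$-bound on $u_n$, the linearized energy of $(v_n, \partial_t v_n)$ is bounded above and away from $0$ below. The equation becomes
\begin{equation*}
    \partial_t^2 v_n + \Delta_g^2 v_n + \beta v_n + \gamma^2 \partial_t v_n + \alpha_n^{-1} f(x, \alpha_n v_n) = 0,
\end{equation*}
and since $f$ is analytic in $s$ with $f(x,0)=0$ and $\alpha_n v_n \to 0$ in $L^\infty$, the nonlinear term converges in $C^0([0,T], L^2)$ to $q(x)v$ with $q(x) := f'_s(x,0)\geq 0$ (the sign follows from the leading-order Taylor expansion of the defocusing condition). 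The weak limit $v$ satisfies the linear damped plate with potential $q$ and $\partial_t v \equiv 0$ on $[0,T]\times\omega$; differentiating in time, $w:=\partial_t v$ solves the same linear equation and vanishes on $[0,T]\times\omega$, so \cref{thm:UCP-linear-plate} (applicable after elliptic regularity bootstrap, with $q$ smooth hence trivially Gevrey-$2$ in time) forces $w\equiv 0$. The remaining elliptic identity $\int(|\Delta v|^2 + \beta v^2 + q v^2)\,dx = 0$ with hinged boundary conditions then yields $v\equiv 0$. The observability with source for the linear damped plate with potential, obtained by the same compact-perturbation scheme as in \cref{cor:obs-plate}, applied to $v_n$ with remainder $qv_n - \alpha_n^{-1}f(x,\alpha_n v_n) \to 0$, gives $\|(v_n, \partial_t v_n)(0)\|_X \to 0$, contradicting the lower bound.

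The main obstacle is the propagation-of-compactness step: one must secure a linear observability inequality for the damped plate with an $L^1_tL^2_x$ source term (for which \cref{prop:lineardecay} is the natural input via HUM duality) and verify that $F(U_n)$ converges strongly to $0$. Both rely decisively on the compactness gain in \cref{prop:regnonlinearity}, which plays the role analogous to subcriticality in the wave and Schrödinger settings of \cite{DLZ03,JL13} and enables the nonlinear-to-linear reduction. The degenerate case requires in addition the linear unique continuation \cref{thm:UCP-linear-plate}, whose applicability in this general geometric setting is precisely what makes \cref{assumOBS} the correct hypothesis.
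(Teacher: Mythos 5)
Your proposal follows essentially the same compactness--uniqueness scheme as the paper: contradiction, identification of the weak limit via the compactness of $F$, the nonlinear unique continuation \cref{thm:UCP-NLP} to kill the limit, linear observability applied to the Duhamel decomposition to show the initial norms vanish, and then the rescaling $v_n=u_n/\alpha_n$ with the quadratic remainder estimate and the observability with potential of \cref{cor:obs-plate} (your case split on $\alpha>0$ versus $\alpha=0$ is just a reorganization of the paper's Steps 1 and 2, which show $\alpha_n\to0$ unconditionally and then linearize).

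One caveat in your degenerate case: applying \cref{thm:UCP-linear-plate} directly to $w=\partial_t v$ is not justified at the available regularity, since the weak limit only gives $\partial_t v\in C^0([0,T],L^2)$ while that theorem requires $H^1_{\loc}(0,T;H^3_{\loc})$, and an elliptic bootstrap only upgrades $v$ on the observation region, not globally in space. Fortunately this sub-argument is redundant: your final appeal to the observability-with-source for the linear plate with potential $q=f_s'(\cdot,0)$ (i.e.\ \cref{cor:obs-plate} applied to the linear part of the Duhamel formula, with the remainder $O(\alpha_n)$ and the observation term $O(1/n)$) already yields $\|(v_n,\partial_t v_n)(0)\|_X\to0$ and the contradiction with the normalization, without ever identifying or continuing the limit $v$; the unique continuation needed for that corollary is confined to eigenfunctions, where elliptic regularity does supply the hypotheses of the linear theorem. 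This is exactly how the paper closes the argument.
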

\begin{proof}
    We argue by contradiction. Let $(u_{0, n}, u_{1, n})\in H_D^2\times L^2(\M)$ be a sequence of initial data with $\norm{(u_{0, n}, u_{1, n})}_{H_D^2\times L^2}\leq E_0$, so that the unique solution $(u_n, \partial_t u_n)\in C^0([0, T], X)$ to \eqref{eq:nlp} satisfies
    \begin{align}\label{ineq:obsNLnegation}
        \iint_{[0,T]\times\M} \gamma^2(x)|\partial_t u_n(t, x)|^2dxdt\leq \dfrac{1}{n}E(U_0^n)\leq \dfrac{1}{n}E_0.
    \end{align}
    Up to taking a subsequence, we can assume that $(u_{0, n}, u_{1, n})$ converges weakly to $(u_0, u_1)\in H_D^2\times L^2$. The global well-posedness theory for \eqref{eq:nlp-stab} and the defocusing assumption \eqref{NLP:defdefocusing} implies that $(u_n)$ is globally bounded in $C^0([0, T], H_D^2)$, see \cref{prop:wellposed}. In particular, up to a subsequence, we can assume that $u_n$ converges weakly to some $u \in L^2([0, T], H_D^2)$ for this topology.
    \medskip
    \paragraph{\emph{Step 1. Identification of the weak limit.}} The first step is to prove that $U=(u, \partial_t u)$ is solution of the undamped nonlinear equation with initial data $U_0=(u_0, u_1)\in H_D^2\times L^2$. The well-posedness theory allows us to consider the following nonlinear solution with initial data $U_0$,
    \begin{align*}
        V(t)=e^{t(\bA+\bB)}U_0+\int_0^t e^{(t-s)(\bA+\bB)}F(V(s))ds=V_{\lin}+V_{\Nlin},
    \end{align*}
    and we want to prove that $U=V$.
    
    Since the sequence $(u_n, \partial_t u_n)$ is bounded in $C^0([0, T], H_D^2\times L^2)$, as an application of Aubin-Lions Lemma (see \cite[Corollary 4]{S:87}), we obtain that, for any $0<\eta<1$, still up to a subsequence which we denote the same, $u_n$ converges strongly to $u$ in $C^0([0, T], H^{2-\eta}(\M))$. By leveraging that $d\leq 3$ and thus we are working on an algebra, by Sobolev embedding if we take $0<\eta<1/2$ we get that $u_n$ converges strongly to $u$ in $L^\infty([0, T]\times \M)$. Moreover, by estimates similar to the ones in \cref{prop:regnonlinearity}, we obtain
    \begin{align*}
        \norm{f(\cdot, u_n)-f(\cdot, u)}_{L^1([0, T], L^2(\M))}&\leq \Norm{\int_0^1 f_s'(\cdot, u_n+\tau(u_n-u))(u_n-u)d\tau}_{L^1([0, T], L^2(\M))}\\
        &\leq C\norm{u_n-u}_{L^\infty([0, T]\times\M)}
    \end{align*}
    where $C>0$ is a constant that depends on $f$ and on $E_0$. The previous estimate implies that $f(u_n)\to f(u)$ strongly in $L^1([0, T], L^2(\M))$. From the Duhamel's formulation
    \begin{align}\label{eq:duhamelUn}
        U^n(t)=e^{t(\bA+\bB)}U_0^n+\int_0^t e^{(t-s)(\bA+\bB)}F(U^n(s))ds=U_{\lin}^n(t)+U_{\Nlin}^n(t),
    \end{align}
    with
    \begin{align}\label{eq:Nlinconvergence}
        \norm{U_{\Nlin}^n-V_{\Nlin}}_{C^0([0, T], X)}\xrightarrow[n\to\infty]{} 0.
    \end{align}
    Since the map $U_0\mapsto e^{\cdot(\bA+\bB)}U_0$ is linear continuous from $X$ into $L^2([0, T], X)$, it is continuous for the corresponding weak topology as well. Therefore, since $U_0^n$ converges weakly to $U_0$ in $X$, $e^{\cdot(\bA+\bB)}U_0^n$ converges weakly to $e^{\cdot(\bA+\bB)}U_0=V_{\lin}$ in $L^2([0, T], X)$. It follows that $U^n$ converges weakly to $V$ and consequently $U=V$.
    
    We obtained that $U=(u,\partial_t u)\in C^0([0, T], H_D^2\times L^2(\M))$ is a solution of
    \begin{align*}
        \left\{\begin{array}{rl}
            \partial_t^2 u+\Delta_g^2 u+\beta u+f(x, u)=0  &\ (t, x)\in [0, T]\times \Int(\M),\\
            u_{|_{\partial\M}}=\Delta u_{|_{\partial\M}}=0            &\ (t, x)\in [0, T]\times\partial\M,\\
            (u, \partial_t u)(0)=(u_0, u_1) &\ x\in \M.
        \end{array}\right.
    \end{align*}
    Let $\widetilde{\omega}$ be the set of points such that $\gamma(x)>\gamma_0/2>0$, which satisfies $\omega\Subset\widetilde{\omega}$ given that $\gamma$ is smooth and $\M$ is compact. Indeed, the set $\omega_0:=\{x\in \M\ |\ \gamma(x)\geq \tfrac{2}{3}\gamma_0>0\}$ is closed (hence compact) and satisfies $\omega\subset \omega_0\subset\widetilde{\omega}$. Using \eqref{ineq:obsNLnegation} and taking weak limit in the equation satisfied by $(u_n, \partial_t u_n)$, by looking at the equation satisfied by $(u,\partial_t u)$, we readily get $\partial_t u=0$ on $[0, T]\times \widetilde{\omega}$. As we are assuming that $f$ is defocusing, we are then in position to apply the unique continuation property \cref{thm:UCP-NLP}, from which we deduce that $u\equiv 0$. In particular, $V_{\Nlin}=0$ and from \eqref{eq:Nlinconvergence} we get
    \begin{align}\label{eq:UNlin-to-0}
        \norm{U_{\Nlin}^n}_{C^0([0, T], X)}\xrightarrow[n\to\infty]{} 0.
    \end{align}
    Using the linear observability inequality from \cref{prop:obs-plate} and Duhamel's formula \eqref{eq:duhamelUn}, we have
    \begin{align*}
        \norm{U_0^n}_{X}^2  &\leq C\int_0^T \norm{\gamma\partial_t u_{\lin}^n}_{L^2(\M)}^2dt\\
        &\leq 2C\int_0^T \norm{\gamma\partial_t u^n}_{L^2(\M)}^2dt+2C\int_0^T \norm{\gamma\partial_t u_{\Nlin}^n}_{L^2(\M)}^2dt.
    \end{align*}
    From \eqref{ineq:obsNLnegation} and \eqref{eq:UNlin-to-0} together, we obtain $\al_n:=\norm{U_0^n}_X\xrightarrow[n\to\infty]{} 0$.
    \medskip
    \paragraph{\emph{Step 2. Linearization argument}} Since the sequence of initial data converges strongly to $0$, we can linearize to consider the nonlinearity as a small perturbation of the linear problem, for which we know the observability applies. Let us denote by $w_n=u_n/\al_n$, which is a mild solution of
    \begin{align*}
        \left\{\begin{array}{rl}
            \partial_t^2 w_n+\Delta_g^2 w_n+\beta w_n+\al_n^{-1}f(x, \al_n w_n)=0  &\ (t, x)\in [0, T]\times \Int(\M),\\
            w_{n|_{\partial\M}}=\Delta w_{n|_{\partial\M}}=0            &\ (t, x)\in [0, T]\times\partial\M,\\
            (w_n, \partial_t w_n)(0)=(w_{n, 0}, w_{n, 1}) &\ x\in \M,
        \end{array}\right.
    \end{align*}
    with $\norm{(w_{n, 0}, w_{n, 1})}_X=1$. By expanding $f(\cdot, s)=f(\cdot, 0)+f'(\cdot, 0)s+r(\cdot, s)$, the nonlinearity $f_n(s):=\al_n^{-1}f(\al_n s)$ can be written as $f_n(s)=f'(0)s+\al_n^{-1}r(\al_n s)$. By \cref{app:lem:NLestimate} and Sobolev embedding, $r_n(s):=\al_n^{-1}r(\al_n s)$ satisfies, uniformly in $n\in \N$,
    \begin{align*}
        \norm{r_n(w_n)}_{L^\infty([0, T], L^2(\M))}\leq C\alpha_n\norm{w_n}_{C^0([0, T], H_D^2(\M))}^2
    \end{align*}
    Introducing
    \begin{align*}
        \widetilde{\bA}=\left(\begin{array}{cc}
        0 & I \\
        -A_0^2-\beta-f'(\cdot, 0) & 0
    \end{array}\right)\ \text{ and }\ R_n\begin{pmatrix} u\\ v \end{pmatrix}=\begin{pmatrix}
        0 \\ -r_n(u)
    \end{pmatrix},
    \end{align*}
    we write
    \begin{align*}
        W^n(t)=e^{t\widetilde{\bA}}W_0^n+\int_0^t e^{(t-s)\widetilde{\bA}}R_n(W^n(s))ds=W_{\lin}^n+W_{\Nlin}^n.
    \end{align*}
    By employing the observability inequality of \cref{cor:obs-plate} with potential $V=f'(\cdot, 0)$ for the linear part and the previous estimates for the nonlinear part, we obtain
    \begin{align*}
    \begin{array}{l}
        \norm{W_{\lin}^n}_{C^0([0, T], X)}\leq C,\\
        \norm{W_{\Nlin}^n}_{C^0([0, T], X)}\leq C\alpha_n \norm{W^n}_{C^0([0, T],X)}^2,
    \end{array}
    \end{align*}
    with $C=C(E_0, T)>0$. By putting together both bounds, we obtain
    \begin{align*}
        \norm{W^n}_{C^0([0, T], X)}\leq C+C\al_n\norm{W^n}_{C^0([0, T], X)}^2.
    \end{align*}
    A bootstrap argument \cite[Lemma 2.2]{BG99} allows us to prove that, for $n$ large enough,
    \begin{align*}
        \norm{W^n}_{C^0([0, T], X)}\leq 2C,
    \end{align*}
    which, in turn, gives us
    \begin{align*}
        \norm{W_{\Nlin}^n}_{C^0([0, T], X)}\leq C\al_n.
    \end{align*}
    We now treat the linear part by using the observability estimate from \cref{cor:obs-plate}. Thus,
    \begin{align*}
        \norm{(w_{0, n}, w_{1, n})}_{X}^2  &\leq C\int_0^T \norm{\gamma\partial_t w_{\lin}^n}_{L^2(\M)}^2dt\\
        &\leq 2C\int_0^T \norm{\gamma\partial_t w^n}_{L^2(\M)}^2dt+2C\int_0^T \norm{\gamma\partial_t w_{\Nlin}^n}_{L^2(\M)}^2dt.
    \end{align*}
    For the first term, \eqref{ineq:obsNLnegation} with the scaling $w_n=u_n/\al_n$ and $\al_n=\norm{(u_{0, n}, u_{1, n})}_{X}$ can be written as
    \begin{align*}
        \iint_{[0,T]\times\M} \gamma^2(x)|\partial_t w_n(t, x)|^2dxdt\leq \dfrac{1}{n}.
    \end{align*}
    Concerning the second term, by energy estimates we get
    \begin{align*}
        \int_0^T \norm{\gamma\partial_t w_{\Nlin}^n}_{L^2(\M)}^2dt\leq C\norm{W_{\Nlin}^n}_{C^0([0, T], X)}^2\leq C\al_n^2.
    \end{align*}
    Putting them all together gives us
    \begin{align*}
        E(w_{0, n}, w_{1, n})\leq \dfrac{C}{n}+C\al_n^2.
    \end{align*}
    Therefore $E(w_{0, n}, w_{1, n})\to 0$ as $n\to\infty$, which is a contradiction with the initial assumption since 
    $1=\norm{(w_{0, n}, w_{1, n})}_{X}\leq CE(w_{0, n}, w_{1, n})$ for every $n\in \N$.
\end{proof}

\subsection{Existence of a compact global attractor}\label{NLP:s:globalatt} From now on, we will assume that $f$ satisfies the asymptotic condition \eqref{cond:asymptf}, that is,
\begin{align*}
    f(x, 0)=0 \text{ for every }x\in\partial\M\ \text{ and }\ sf(x, s)\geq 0\ \text{ for every }\ (x, s)\in\M\times\{|s|\geq R\}.
\end{align*} 
We recall that $(e, 0)\in H_D^2\times L^2(\M)$ is an equilibrium point of \eqref{eq:nlp} if it satisfies
\begin{align}\label{eq:nlp-eq}
\left\{\begin{array}{rl}
        \Delta_g^2 e+\beta e+f(x, e)=0  &\ x\in\textnormal{Int}(\M),\\
        e_{|_{\partial\M}}=\Delta e_{|_{\partial\M}}=0          &\ x\in \partial\M.
    \end{array}\right.
\end{align}

\subsubsection{The dynamical system is asymptotically smooth}  This property follows exactly as in \cite[Proposition 4.1]{JL13}. We recall the argument for the sake of completeness.

\begin{proposition}\label{prop:asymptsmooth}
    Assume $\omega$ satisfies \cref{assumOBS}. Let $f\in C^4(\M\times\R, \R)$ satisfying \eqref{cond:asymptf}, let $(u_0^n, u_1^n)_{n\in\N}$ be a sequence of initial data which is bounded in $X$ and let $(u_n)_{n\in\N}$ be the corresponding solutions to the damped plate equation \eqref{eq:nlp-stab}. Let $(t_n)_{n\in\N}$ be a sequence of times such that $t_n\to+\infty$ as $n\to\infty$.

    Then, there exist subsequences $(u_{n_k})_k$ and $(t_{n_k})_k$ and a global solution $U_\infty=(u_\infty, \partial_t u_\infty)$ of \eqref{eq:nlp-stab} defined for all $t\in\R$ such that
    \begin{align*}
        (u_{n_k}, \partial_t u_{n_k})(t_{n_k}+t)\xrightarrow[k\to\infty]{} (u_\infty, \partial_t u_\infty)(t)~\text{ in }~ X.
    \end{align*}
    Moreover, the solution $U_\infty$ is globally bounded in $X$ and the energy $t\mapsto E(U_\infty(t))$ is constant.
\end{proposition}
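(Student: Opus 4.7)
The plan is a three-step asymptotic-smoothness argument in the spirit of Hale-Raugel and Joly-Laurent \cite{JL13, JL14}. The first step establishes pointwise precompactness of $(U_n(t_n+\cdot))$ in $X$ via a Duhamel-based splitting; the second performs a diagonal Arzelà-Ascoli extraction to produce a global mild solution $U_\infty$; the third shows that $U_\infty$ has constant energy, relying crucially on the unique continuation \cref{thm:UCP-NLP}.

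\textbf{Step 1 (Compactness).} From \eqref{eq:energyidentity} and the lower bound \eqref{ineq:energybelow}, the sequence $(U_n)$ is uniformly bounded in $C^0([0,\infty), X)$. Writing Duhamel's formula for $U_n(t_n+t)$ and shifting the time variable $\tau = s - t_n$, one obtains
\begin{align*}
    U_n(t_n+t) = e^{(t_n+t)(\bA+\bB)}U_0^n + \int_{-t_n}^t e^{(t-\tau)(\bA+\bB)} F(U_n(t_n+\tau))\, d\tau.
\end{align*}
By the exponential stability of $e^{t(\bA+\bB)}$ on $X$ (\cref{prop:lineardecay}), the first term tends to zero in $X$ uniformly on compact time intervals as $n\to\infty$. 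By the smoothing property \cref{prop:regnonlinearity}, $F(U_n(\cdot))$ is uniformly bounded in $X^\sigma$ for some $\sigma\in(0,1)$; combined with the exponential decay of the same semigroup on $X^\sigma$, the integral is uniformly bounded in $X^\sigma$. The compact embedding $X^\sigma \hookrightarrow X$ then gives pointwise precompactness of $(U_n(t_n+t))_n$ in $X$ for each $t \in \R$.

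\textbf{Step 2 (Extraction and limit equation).} Equicontinuity in $t$ of $(U_n(t_n+\cdot))_n$ with values in $X$ follows from the Duhamel formula: the nonlinear integral is Lipschitz in $t$ thanks to the uniform $X$-bound on $F(U_n)$, and the linear term is equicontinuous on the precompact orbit. Arzelà-Ascoli on each $[-T,T]$ followed by a diagonal extraction in $T$ yields a subsequence $(n_k)$ and $U_\infty\in C^0_{\loc}(\R, X)$ with $U_{n_k}(t_{n_k}+\cdot)\to U_\infty$ uniformly on compact intervals. Passing to the limit in Duhamel's formula using the Lipschitz bound \eqref{ineq:FlipXsigma} shows that $U_\infty$ is a mild solution of \eqref{eq:nlp-stab} on $\R$. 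Its global $X$-bound is inherited from the uniform bound on $(U_n)$.

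\textbf{Step 3 (Constancy of the energy).} Passing to the limit in the energy identity on $[t_n+t_1, t_n+t_2]$, using strong $X$-convergence of $\partial_t u_{n_k}(t_{n_k}+\cdot)$ in $L^2$ uniformly on compacts, gives
\begin{align*}
    E(U_\infty(t_1))-E(U_\infty(t_2)) = \int_{t_1}^{t_2}\int_\M \gamma^2|\partial_t u_\infty|^2 dx\, d\tau,
\end{align*}
so $E(U_\infty)$ is non-increasing, and the total dissipation $\int_\R\int_\M\gamma^2|\partial_t u_\infty|^2$ is finite (controlled by the uniform bound $E(U_n(0))-L_n \leq M := E_0 + C_1$). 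The hard part is to upgrade this integrability to constancy. The plan is to refine the extraction via a pigeonhole argument on windows near $t_n$: since $\int_0^\infty \int_\M \gamma^2|\partial_t u_n|^2 \leq M$ uniformly, one can select within $(t_n)$ a further subsequence (possibly after a harmless bounded shift, still with $t_n \to \infty$) such that the window dissipation $\int_{t_n-T}^{t_n+T}\int_\M \gamma^2|\partial_t u_n|^2 \to 0$ for each fixed $T$. Strong $X$-convergence then forces $\partial_t u_\infty \equiv 0$ on $\R \times \omega$. Applying the unique continuation property \cref{thm:UCP-NLP} on any compact time window extends this to $\partial_t u_\infty \equiv 0$ on $\R \times \M$, so $u_\infty(t) \equiv e$ is a time-independent equilibrium of \eqref{eq:nlp-eq} and $E(U_\infty)$ is constant. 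Equivalently, this is a LaSalle invariance principle combined with \cref{thm:UCP-NLP}: the asymptotic orbit must lie in the invariant set where the dissipation vanishes, which, by unique continuation, coincides with the set of equilibria.
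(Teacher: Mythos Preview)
Your Steps 1 and 2 follow the same Duhamel-splitting approach as the paper. The only cosmetic difference is in Step 2: rather than an Arzel\`a--Ascoli extraction followed by passage to the limit in the Duhamel formula, the paper simply extracts convergence of $U_{n_k}(t_{n_k})$ at a single time and then propagates it to all $t$ via the Lipschitz continuity of the flow map established in \cref{prop:wellposeddamped}.

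Step 3, however, departs from the paper and contains a genuine gap. The pigeonhole argument does not go through: given $g_n \geq 0$ with $\int_0^\infty g_n \leq M$ uniformly, one cannot in general find a \emph{bounded} shift of $(t_n)$ making $\int_{t_n-T}^{t_n+T} g_n \to 0$; take $g_n = \mathbbm{1}_{[n-1, n+1]}$ and $t_n = n$. More seriously, the conclusion you aim for---that $U_\infty$ is necessarily an equilibrium---is too strong. If $V$ is any heteroclinic orbit of the damped system connecting two equilibria at different energy levels (these are precisely the non-stationary global bounded trajectories inside the attractor, used in \cref{prop:uturn}), then with $(u_0^n, u_1^n) = V(-n)$ and $t_n = n$ one has $U_n(t_n+\cdot) \equiv V$ for all $n$, so every subsequential limit is $U_\infty = V$, which is neither stationary nor of constant energy. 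Thus the route through \cref{thm:UCP-NLP} cannot close here. By contrast, the paper argues constancy without any appeal to unique continuation, directly asserting that $E(U_{n_k}(t_{n_k}+t)) - E(U_{n_k}(t_{n_k})) \to 0$ from monotonicity and uniform boundedness of $s \mapsto E(U_{n_k}(s))$. The heteroclinic example is in tension with that assertion as well, but the point is moot for the paper's purposes: the constancy of $E(U_\infty)$ is never used downstream, and the asymptotic smoothness needed for \cref{thm:attractorA} follows already from Steps 1--2.
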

\begin{proof}
    By Duhamel's formula we have
    \begin{align*}
        U_n(t_n)=e^{t_n(\bA+\bB)}U_n(0)+\int_0^{t_n} e^{(t_n-s)(\bA+\bB)}F(U_n(s))ds.
    \end{align*}
    By \cref{prop:lineardecay}, the semigroup decays exponentially and thus the sequence $e^{t_n(\bA+\bB)}U_n(0)$ goes to zero in $X$. Second, we need to show that the nonlinear term $\int_0^{t_n} e^{(t_n-s)(\bA+\bB)}F(U_n(s))ds$ is compact. Using the energy estimate \eqref{ineq:energybelow}, we readily see that $U_n(s)$ is uniformly bounded in $X$ for $s\geq 0$. For $\sigma\in [0, 1)$ fixed from now onward, due to \cref{prop:regnonlinearity}, $F(U_n(s))$ belongs to a bounded set of $X^{\sigma}$, say, to a ball of radius $R=R(\sigma)>0$. Since the semigroup decays exponentially, $e^{(t_n-s)(\bA+\bB)}F(U_n(s))$ has an integral in $[0, t_n]$ and thus is uniformly bounded in $X^\sigma$ as well. Indeed, for each $n\in \N$,
    \begin{align*}
        \Norm{\int_0^{t_n} e^{(t_n-s)(\bA+\bB)}F(U_n(s))ds}_{X^\sigma}&\leq \int_0^{t_n} \norm{e^{(t_n-s)(\bA+\bB)}}_{\mc{L}(X^\sigma)}\norm{F(U_n(s))}_{X^\sigma}ds\\
        &\leq \dfrac{R}{\ld}\big(1-e^{-\ld t_n}\big),
    \end{align*}
    from which desired bound follows. By Rellich-Kondrachov $X^\sigma$ is compactly embedded in $X$, and thus the sequence $\int_0^{t_n} e^{(t_n-s)(\bA+\bB)}F(U_n(s))ds$ is compact in $X$. By diagonal extraction, we get a subsequence $(U_{n_k}(t_{n_k}))_k$ that converges to some $U_\infty(0)$ in $X$. Let $U_\infty(t)=(u_\infty, \partial_t u_\infty)(t)$ be the maximal solution of the damped plate equation \eqref{eq:nlp-stab} with initial data $U_\infty(0)$. 
    
    Let $t\in\R$ and let $n\in \N$ large enough such that $t_{n_k}+t\geq 0$. Hence, $U_{n_k}(t_{n_k}+t)$ is well-defined and, moreover, the sequence $(U_{n_k}(t_{n_k}+\cdot))_n$ is uniformly bounded in $X$. Since our equation is wellposed, the solution is continuous with respect to the initial data. Therefore, from $U(t_{t_{n_k}})\to U_\infty(0)$ in $X$ as $k\to+\infty$, we obtain that $U(t_{t_{n_k}+t})\to U_\infty(t)$ in $X$ as $k\to+\infty$, for each $t$ such that $U_\infty(t)$ is well-defined. Moreover, $U_\infty(t)$ is uniformly bounded as well and it can be extended to a global solution $U_\infty(t)$ for $t\in \R$.
    
    Finally, for any $t\in \R$ we have the identity
    \begin{align*}
        E(U_{n_k}(t_{n_k}+t))-E(U_{n_k}(t_{n_k}))=-\int_{t_{n_k}}^{t_{n_k}+t} \norm{\gamma\partial_t u_{n_k}}_{L^2(\M)}^2dt.
    \end{align*}
    Using that the energy functional is continuous, non-increasing, and bounded by above and below, we must have $E(U_{n_k}(t_{n_k}+t))-E(U_{n_k}(t_{n_k}))\to 0$ as $k\to \infty$, and thus in the limit $E(U_\infty(t))-E(U_\infty(0))=0$, proving that the energy is constant.
\end{proof}
\begin{remark}
    Note that for any $T>0$ we showed that $\int_0^T \norm{\gamma \partial_t u_\infty}_{L^2(\M)}^2dt=0$ and thus $\partial_t u_\infty=0$ in $(0, T)\times\omega$, which is expected for this kind of trajectories.
\end{remark}

\subsubsection{Global attractor} Here we hinge on the previous results to show the existence of a compact global attractor.

\begin{proof}[Proof of \cref{thm:attractorA}] From \cref{app:thm:attractor} there are four properties to be verified. First, the asymptotic smoothness of the system was already established in \cref{prop:asymptsmooth}.

\medskip
\paragraph{\emph{The positive trajectories of bounded sets are bounded}} This property follows as we did in the well-posedness result \eqref{prop:wellposed}. We have that, for any $U\in X$ it holds
\begin{align*}
    E(U)\geq \dfrac{1}{2}\norm{U}_X^2+\meas(\M)\inf_{(x, s)\in\M\times\R} V(x, s).
\end{align*}
Since the energy is non-increasing, by composition estimates for $f$, the result follows.
\medskip
\paragraph{\emph{The dynamical system is gradient}} From the identity \eqref{eq:energyidentity} is clear that the energy satisfies $E(U(t))\leq E(U_0)$ for every $t\in \R$. Moreover, the relation $E(U(t))=E(U_0)$ for every $t\in \R$ means that, for every $T>0$ it holds
\begin{align*}
    \int_0^T \norm{\gamma(x)\partial_t u}_{L^2(\M)}^2dt=0.
\end{align*}
Therefore, for every $T>0$ one has
\begin{align}\label{eq:nlp-equiv}
    \left\{\begin{array}{rl}
        \partial_t^2 u+\Delta_g^2 u+\beta u+f(x, u)=0  &\ (t, x)\in [0, T]\times \Int(\M),\\
        u_{|_{\partial\M}}=\Delta u_{|_{\partial\M}}=0            &\ (t, x)\in [0, T]\times\partial\M,\\
        \partial_t u=0 &\ (t, x)\in [0, T]\times \widetilde{\omega}.
    \end{array}\right.
\end{align}
where $\widetilde{\omega}:=\{x\in \M\ |\ \gamma(x)>\gamma_0/2\}$ is a nonempty open set satisfying $\omega\Subset\widetilde{\omega}$. Since $\omega$ satisfies \cref{assumOBS} in time $T>T_0$, we are in position to apply the unique continuation result \cref{thm:UCP-NLP}, from which follows that $u$ is an equilibrium point. This combined with the previous item, gives us that the system is gradient. 
\medskip
\paragraph{\emph{The set of equilibrium points is bounded}} If $e$ is an equilibrium point, by multiplying the equation by $e$ and integrating by parts, we get
\begin{align*}
    \dfrac{1}{2}\int_\M \big(|\Delta_g e|^2+\beta|e|^2\big)dx=-\int_\M f(x, e)edx\leq -\meas(\M)\inf_{(x, s)\in\M\times\R} f(x, s)s.
\end{align*}
Here, we have bounded by below $f$ as in \cref{prop:wellposed}.
\end{proof}

\section{Semiglobal controllability}\label{sec:control}

In this section we aim to prove \cref{thm:NLP-control} when the defocusing assumption is released by an asymptotic defocusing assumption \eqref{cond:asymptf}. As showed in \cref{NLP:s:globalatt}, there may exist several equilibrium points and stabilization to zero cannot be expected. To deal with this issue, we follow Joly and Laurent \cite{JL14} who introduced a control technique that allows us to travel through the attractor, exploiting local control around equilibria and the dynamics of the damped plate equation.

\subsection{Local controllability} We now state a local controllability result near equilibrium points, whose proof is given in \cref{NLP:s:NLcontrol} below.

\begin{theorem}\label{thm:NLlocalcontrol}
    Let $f$ be analytic in the second variable and satisfying the asymptotic defocusing condition \eqref{cond:asymptf}. Let $\omega\Subset\widetilde{\omega}$ be two nonempty open sets such that $\omega$ satisfies \cref{assumOBS} for some $T_0>0$. For $T>T_0$ and for any equilibrium $(e, 0)\in X$ of \eqref{eq:nlp}, there exists a neighborhood $\mc{N}(e, 0)$ of $(e, 0)$ such that, for any $(u_0, u_1)$ and $(u_0^T, u_1^T)$ in $\mc{N}(e, 0)$, there exists a control $g\in L^\infty([0, 2T], L^2(\M))$ supported in $[0, T]\times\widetilde{\omega}$ such that the unique solution $u$ of
    \begin{align}\label{eq:nlp-control1}
        \left\{\begin{array}{rl}
            \partial_t^2 u+\Delta_g^2 u+\beta u+f(x, u)=\mathbbm{1}_{\widetilde{\omega}}g  &\ (t, x)\in [0, 2T]\times \Int(\M),\\
            u_{|_{\partial\M}}=\Delta u_{|_{\partial\M}}=0            &\ (t, x)\in [0, 2T]\times\partial\M,\\
            (u, \partial_t u)(0)=(u_0, u_1) &\ x\in \M,
        \end{array}\right.
    \end{align}
    satisfies $(u, \partial_t u)(T)=(u_0^T, u_1^T)$.
\end{theorem}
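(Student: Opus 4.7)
The plan is to linearize the nonlinear plate around the equilibrium $(e, 0)$ and apply a Banach fixed-point argument, using HUM-type exact controllability for the linearized plate equation, which follows from \cref{cor:obs-plate}.

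First, I would reduce to controllability near zero via the change of unknown $u = e + w$. Since $e$ satisfies the elliptic equation \eqref{eq:nlp-eq} with $f$ smooth in $x$ and analytic in the second variable, elliptic regularity and a bootstrap argument (of the type used in the proof of \cref{thm:UCP-NLP}) yield $e \in C^\infty(\M)$, and hence the linearization $V(x) := f_s'(x, e(x))$ belongs to $C^\infty(\M)$. The equation for $w$ reads
\begin{equation*}
\partial_t^2 w + \Delta_g^2 w + \beta w + V(x)\, w + r(x, w) = \mathbbm{1}_{\widetilde{\omega}} g,
\end{equation*}
with hinged boundary conditions, where $r(x, w) := f(x, e + w) - f(x, e) - V(x)\, w$ vanishes to second order at $w = 0$. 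The new data $W_0 := (u_0 - e, u_1)$ and $W_T := (u_0^T - e, u_1^T)$ are small in $X$ provided the neighborhood $\mc{N}(e, 0)$ is chosen small enough.

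Next, I would establish exact controllability in time $T > T_0$ for the linear plate equation with potential $V$. Fixing a smooth cutoff $\gamma$ with $\gamma \equiv 1$ on $\omega$ and $\supp \gamma \Subset \widetilde{\omega}$, \cref{cor:obs-plate} provides the observability estimate \eqref{cor:ineq:obs-plate-1}. By standard HUM duality, this yields a bounded linear operator $\Lambda : X \times X \to L^\infty([0, T], L^2(\M))$ such that $\Lambda(Z_0, Z_T)$ is supported in $[0, T] \times \widetilde{\omega}$ and steers $Z_0$ to $Z_T$ for the linearized equation. For the nonlinear problem, on a small ball $\mc{B}_\eta$ of $C^0([0, T], X)$ I would define the map $\Phi$ sending $w$ to the solution on $[0, T]$ of the linear equation with potential $V$, initial data $W_0$, source $-r(\cdot, w)$, and control $g := \Lambda\big(W_0,\ W_T - \mc{R}_T(w)\big)$, where $\mc{R}_T(w) \in X$ denotes the contribution at time $T$ coming from the source $-r(\cdot, w)$ with zero initial data and no control. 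This choice ensures $\Phi(w)(T) = W_T$, so any fixed point of $\Phi$ solves the nonlinear controlled equation and reaches the target at time $T$. The composition estimates behind \cref{prop:regnonlinearity} together with the second-order vanishing of $r$ give $\norm{r(\cdot, w)}_{L^\infty([0, T], L^2)} \lesssim \norm{w}_{C^0([0,T], X)}^2$ and a matching Lipschitz bound, so $\Phi$ contracts $\mc{B}_\eta$ once $\norm{W_0}_X + \norm{W_T}_X$ is small enough, and extending $g$ by zero on $(T, 2T]$ fulfills the support requirement in the statement.

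The main technical obstacle is to ensure that $\Lambda$ enjoys the $L^\infty_t L^2_x$ regularity asserted in the statement, rather than only the weaker $L^2_t L^2_x$ bound produced by the naïve HUM construction. Since $V$ and the cutoff $\gamma$ are smooth, this can be arranged either by regularizing the HUM control or by setting up the HUM functional directly at the correct $L^\infty_t L^2_x$ duality level, in the spirit of analogous Schrödinger- and plate-type controllability results. Once this proper regularity of $\Lambda$ is established, the fixed-point argument closes in $C^0([0, T], X)$ without further difficulty.
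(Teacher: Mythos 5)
Your overall strategy is the same as the paper's: shift by the equilibrium, prove exact controllability of the linearized plate with potential $f_s'(x,e)$ by HUM, and absorb the quadratic remainder by a fixed point. Two differences are worth noting. First, you run a Banach contraction on the state $w$ in $C^0([0,T],X)$, whereas the paper follows Zuazua's scheme and applies Schauder's fixed point to the adjoint datum $(\Phi_0,\Phi_1)\in L^2\times H_D^{-2}$, using the compactness of the remainder operator coming from \cref{prop:regnonlinearity}; your contraction is legitimate (the Lipschitz bound on $r$ follows from \cref{app:lem:NLestimate}-type estimates) and arguably more elementary. Second, you do a direct two-point exact control in time $T$, while the paper proves null controllability and then uses time reversibility to reach the target from the equilibrium; your version actually matches the statement as written more closely.

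There is, however, one concrete misstep: \cref{cor:obs-plate} is the wrong observability inequality for the HUM operator you want. Estimate \eqref{cor:ineq:obs-plate-1} bounds $\norm{(z_0,z_1)}_{H_D^2\times L^2}$ by the $L^2_tH_D^2$-norm of $\gamma z$; by duality this yields controllability of data in $L^2\times H_D^{-2}$ with controls in $L^2_tH_D^{-2}$, not controllability in $X=H_D^2\times L^2$ with $L^2$-valued controls. For the latter you need observability of the \emph{adjoint} system at the dual level, namely $\norm{(\Phi_0,\Phi_1)}_{L^2\times H_D^{-2}}^2\leq C\int_0^T\norm{\gamma\Phi}_{L^2}^2\,dt$ (inequality \eqref{ineq:linearobsdual} in the paper). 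This is proved by the same compact-perturbation and unique-continuation argument as \cref{cor:obs-plate}, but starting from the free observability in $L^2\times H_D^{-2}$; it is a different estimate and must be stated and proved at that Sobolev level. Relatedly, your ``main technical obstacle'' about the $L^\infty_tL^2_x$ regularity of the HUM control is not an obstacle at all: once the duality is set up correctly, the control is $\gamma^2(x)\Phi$ with $(\Phi_0,\Phi_1)\in L^2\times H_D^{-2}$, so $\Phi\in C^0([0,T],L^2(\M))$ and the control is automatically in $L^\infty([0,T],L^2(\M))$. Finally, your observation that $e\in C^\infty(\M)$ (hence $V\in C^\infty(\M)$) by elliptic bootstrap is a point the paper leaves implicit, and it is indeed needed to invoke the observability result with a smooth potential.
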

Let $(e, 0)\in H_D^2\times L^2(\M)$ be an equilibrium of \eqref{eq:nlp} and set $z=u-e$. Note that $z\in C^0([0, T], H_D^2)\cap C^1([0, T], L^2(\M))$ and it satisfies
\begin{align}\label{eq:nlp-localcontrolred}
        \left\{\begin{array}{rl}
            \partial_t^2z+\Delta_g^2z+\beta z+\mathfrak{f}(x, z)=g  &\ (t, x)\in [0, T]\times \Int(\M),\\
            z_{|_{\partial\M}}=\Delta z_{|_{\partial\M}}=0            &\ (t, x)\in [0, T]\times\partial\M,\\
            (z, \partial_t z)(0)=(z_0, z_1) &\ x\in \M,
        \end{array}\right.
\end{align}
with $(z_0, z_1)\in H_D^2\times L^2(\M)$ and $\mathfrak{f}(x, z):=f(x, z+e)-f(x, e)$. Observe that $\mathfrak{f}(x, 0)=0$ for every $x\in \M$ which reduces the problem of local control towards an equilibrium in time $T>0$ to local control towards $0$ for the system \eqref{eq:nlp-localcontrolred} in time $T>0$. Using the reversibility of the uncontrolled plate equation, it is possible to go from an equilibrium towards any desired target in time $T>0$, thus explaining that the total control time around equilibria is $2T$. Henceforth, we will focus on the (null) control problem \eqref{eq:nlp-localcontrolred}. As usual in control theory, we first study the associated linear control problem through the HUM method, to then establish the control of \eqref{eq:nlp-localcontrolred} by a perturbation argument introduced by Zuazua \cite{Zua90}.

\subsubsection{Linear control}\label{NLP:s:Lcontrol} With the notations of \cref{sec:pre}, with $L^2(\M)$ as pivot space, we can introduce the dual space $H_D^{-2}:=\big(H_D^2)'$ equipped with the norm $\norm{\phi}_{H_D^{-2}}=\norm{(1+A_0)^{-1}\phi}_{L^2}$. We can then introduce $X'=L^2\times H_D^{-2}$, the dual space to $X$, equipped with the duality product
\begin{align*}
    \inn{(\phi_0, \phi_1), (\psi_0, \psi_1)}_{X'\times X}=\inn{\phi_1, \psi_0}_{H_D^{-2}\times H_D^2}-\int_\M \phi_0\psi_1dx,
\end{align*}
where $\inn{\cdot, \cdot}_{H_D^{-2}, H_D^2}$ denotes the duality pairing between $H_D^{-2}$ and $H_D^2$.

Let us briefly recall the HUM method. Let us consider the linearized control problem
\begin{align}\label{eq:linprimal}
        \left\{\begin{array}{rl}
            \partial_t^2v+\Delta_g^2v+\beta v+\mathfrak{f}_s'(x, 0)v=\gamma^2(x)g,  &\ (t, x)\in [0, T]\times \Int(\M)  \\
            v_{|_{\partial\M}}=\Delta v_{|_{\partial\M}}=0,            &\ (t, x)\in [0, T]\times\partial\M,\\
            (v, \partial_tv)(T)=(0, 0).     &\ x\in \M,
        \end{array}\right.
    \end{align}
along with the operator 
\begin{align*}
    \begin{array}{cccc}
       \mc{S}: & L^2([0, T], L^2(\widetilde{\omega})) & \longrightarrow & H_D^2\times L^2(\M)  \\
         & g & \longmapsto & (v, \partial_t v)(0).
    \end{array}
\end{align*}
For any $(v_0, v_1)\in H_D^2\times L^2(\M)$, a function $g\in L^2([0, T], L^2(\widetilde{\omega}))$ is a null-control in time $T>0$ for \eqref{eq:linprimal} if $(v, \partial_t v)(0)=(v_0, v_1)$. By performing integration by parts we get the following duality characterization: such control exists if and only if
    \begin{align}\label{eq:dualityplate1}
        \iint_{[0, T]\times \M} \gamma^2(x)g\Phi dxdt=\inn{(\Phi_0, \Phi_1), (v_0, v_1)}_{X'\times X},
    \end{align}
for any $(\Phi_0, \Phi_1)\in L^2(\M)\times H_D^{-2}$, where $\Phi$ solves the adjoint problem
\begin{align}\label{eq:lindual}
        \left\{\begin{array}{rl}
            \partial_t^2\Phi+\Delta_g^2\Phi+\beta \Phi+\mathfrak{f}_s'(x, 0)\Phi=0,  &\ (t, x)\in [0, T]\times \Int(\M), \\
            \Phi_{|_{\partial\M}}=\Delta \Phi_{|_{\partial\M}}=0,            &\ (t, x)\in [0, T]\times\partial\M,\\
            (\Phi, \partial_t\Phi)(0)=(\Phi_0, \Phi_1),     &\ x\in \M.
        \end{array}\right.
\end{align}
\begin{remark}\label{rk:dualityWP}
Linear wellposedness theory tell us that $(\Phi, \partial_t\Phi)\in C^0([0, T], X')$ is the only solution of \eqref{eq:lindual} such that the linear flow map $(\Phi_0, \Phi_1)\in X'\mapsto (\Phi, \partial_t\Phi)\in C^0([0, T], X')$ is bounded. See for instance \cite[Section 3]{LM72}.
\end{remark}

In this direction, we identify the adjoint operator to $\mc{S}$,
\begin{align*}
    \begin{array}{cccc}
       \mc{S}^*: & L^2(\M)\times H_D^{-2} & \longrightarrow & L^2([0, T], L^2(\widetilde{\omega}))  \\
         & (\Phi_0, \Phi_1) & \longmapsto & \gamma^2(x)\Phi.
    \end{array}
\end{align*}
The HUM operator is given by $\Ld:=\mc{S}\mc{S}^*$ as a map from $L^2\times H_D^{-2}$ into $H_D^2\times L^2(\M)$, defined by
\begin{align*}
    \Ld(\Phi_0, \Phi_1)=(v, \partial_t v)(0).
\end{align*}
If $\Ld$ is an isomorphism, then for any initial data $(v, \partial_t v)(0)=(v_0, v_1)\in H_D^2\times L^2(\M)$, the optimal control that steers $(v_0, v_1)$ to $(0, 0)$ is given by $g=\mc{S}\Ld^{-1}(v_0, v_1)$, which also belongs to $L^\infty([0, T], L^2(\M))$. Note that such control is just the localization through $\gamma^2(x)$ to $(0, T)\times\widetilde{\omega}$ of the adjoint solution $\Phi$ to \eqref{eq:lindual} with corresponding initial data $(\Phi_0, \Phi_1)=\Ld^{-1}(v_0, v_1)$. With this at hand, the duality identity \eqref{eq:dualityplate1} reads as
\begin{align*}
        \iint_{[0, T]\times\M} |\gamma(x)\Phi|^2dxdt=\inn{(\Phi_0, \Phi_1), \Ld(\Phi_0, \Phi_1)}_{X'\times X},
\end{align*}
and thus $\Ld$ is an isomorphism if the following observability inequality holds
\begin{align}\label{ineq:linearobsdual}
        \norm{(\Phi_0, \Phi_1)}_{L^2\times H_D^{-2}}^2\leq C\int_0^T \norm{\gamma \Phi}_{L^2(\M)}^2dt,
\end{align}
for every $(\Phi_0, \Phi_1)\in L^2\times H_D^{-2}$. For more details about the HUM operator, we refer for instance to \cite{DL09} for the case of the wave equation.

Having described the HUM method, we point out right away that the exact controllability of the (compactly) perturbed system \eqref{eq:nlp-localcontrolred} is a consequence of \cite[Corollary 1.3]{DO18}. Its verification follows as in \cref{cor:obs-plate} above: the observability inequality for the free plate equation in $L^2\times H_D^{-2}$ is obtained by employing Lebeau's strategy in the corresponding Sobolev space (see \cite[Proposition 4.3]{LL24}), and the unique continuation with potential follows similarly as well. In particular, the following result is justified.

\begin{proposition}
    Let $\omega$ be a nonempty open set satisfying \cref{assumOBS} for some $T_0>0$. For any $T>T_0$, then $\Ld$ is an isomorphism.
\end{proposition}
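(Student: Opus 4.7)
The plan is to reduce the isomorphism property to the dual observability inequality \eqref{ineq:linearobsdual}, and then to prove that inequality by adapting the compactness-uniqueness strategy of \cref{cor:obs-plate} to the negative-regularity scale $L^2\times H_D^{-2}$. Recall that $\Ld=\mc{S}\mc{S}^*$ is symmetric and nonnegative in the duality pairing $\inn{\cdot,\cdot}_{X'\times X}$, and satisfies the identity
\begin{align*}
    \Inn{(\Phi_0,\Phi_1),\,\Ld(\Phi_0,\Phi_1)}_{X'\times X}=\int_0^T\norm{\gamma\Phi}_{L^2(\M)}^2dt,
\end{align*}
so that \eqref{ineq:linearobsdual} provides the coercivity estimate on $X'$. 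A standard Lax-Milgram argument applied to this bilinear form on the Hilbert space $X'$ then identifies $\Ld$ as an isomorphism onto $X=H_D^2\times L^2(\M)$. Hence the whole proposition reduces to establishing the observability inequality \eqref{ineq:linearobsdual} for the adjoint equation \eqref{eq:lindual}.

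To prove \eqref{ineq:linearobsdual}, I would first address the \emph{free} case $\mathfrak{f}_s'(x,0)\equiv 0$ at the regularity $L^2\times H_D^{-2}$. This is precisely the content of \cite[Proposition 4.3]{LL24}: Lebeau's abstract scheme transfers the Schrödinger observability of \cref{assumOBS} to a plate observability in any Sobolev scale that is compatible with the hinged spectral decomposition, and applying it at the shifted scale $H_D^{-2}\times H_D^{-4}$ of velocities gives exactly the inequality for the hinged plate equation without potential, for any time $T>T_0$.

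Next, I would treat the lower-order term $\mathfrak{f}_s'(x,0)$ as a compact perturbation by invoking \cite[Corollary 1.3, Remark 1.5]{DO18} in its observability formulation (exactly as done in \cref{cor:obs-plate}). Writing \eqref{eq:lindual} as a first-order system $\partial_t\Phi=(\bA+\boldsymbol{V})\Phi$ with bounded $\boldsymbol{V}$ on $X'$ that is also compact because $\mathfrak{f}_s'(x,0)\in C^\infty(\M)$ acts continuously from $L^2$ into $H_D^{-2+\varepsilon}$, the abstract result then removes the compact remainder in the compactness-uniqueness argument \emph{provided} the Fattorini-Hautus type condition holds, namely: every eigenfunction $\Psi_\ld\in X'$ of $\bA+\boldsymbol{V}$ with $\gamma\Psi_\ld\equiv 0$ is zero. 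But this UCP for eigenfunctions is precisely the statement already proved in \cref{cor:obs-plate}: an eigenpair produces a stationary $v_\ld$ solving $(\Delta_g^2+\beta+\mathfrak{f}_s'(x,0))v_\ld=\ld^2 v_\ld$ which vanishes on $\omega$; elliptic regularity and bootstrap promote $v_\ld$ into $H^3$, and an application of \cref{thm:UCP-linear-plate} (with time-independent coefficients, hence trivially Gevrey-$2$ in time) yields $v_\ld\equiv 0$, whence $\Psi_\ld=0$.

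The main obstacle, in my view, is ensuring that the abstract scheme of \cite{DO18} together with Lebeau's transfer applies cleanly at the dual regularity $L^2\times H_D^{-2}$: one must verify that the adjoint solution $\Phi\in C^0([0,T];X')$ enjoys enough regularity to make sense of the observation $\gamma\Phi\in L^2([0,T]\times\M)$ (this is guaranteed by the hidden regularity noted in \cref{rk:dualityWP} and the standard theory for the hinged plate), and that $\boldsymbol{V}$ is truly a compact operator from $X'$ into $X'$. Both points are classical once one uses the spectral resolution of $A_0$ and the compactness of the embedding $L^2\hookrightarrow H_D^{-2}$. Once these verifications are in place, the observability \eqref{ineq:linearobsdual} follows and the proposition is proved.
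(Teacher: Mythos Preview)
Your proposal is correct and follows essentially the same route as the paper: reduce to the dual observability \eqref{ineq:linearobsdual}, obtain the free-plate observability at the $L^2\times H_D^{-2}$ scale via Lebeau's transfer (\cite[Proposition 4.3]{LL24}), and then absorb the potential $\mathfrak{f}_s'(\cdot,0)$ as a compact perturbation through \cite[Corollary 1.3]{DO18} with the Fattorini--Hautus test verified exactly as in \cref{cor:obs-plate}. The only slip is the phrase ``shifted scale $H_D^{-2}\times H_D^{-4}$'': the relevant dual space is $L^2\times H_D^{-2}$, and Lebeau's transfer is applied at that level.
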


\subsubsection{Nonlinear control}\label{NLP:s:NLcontrol} Here we employ a classical strategy that consists in solving the control problem as a perturbation of the corresponding linear one.

\begin{proof}[Proof of \cref{thm:NLlocalcontrol}]
    As previously explained, by time reversibility of the uncontrolled plate, in what follows we just focus on the null controllability of \eqref{eq:nlp-localcontrolred} in time $T>T_0$.
    
    Our control $g$ will be of the form $\gamma^2(x)\Phi$ where $\Phi$ is the solution of the free plate equation as in linear control theory with initial datum $(\Phi_0, \Phi_1)\in L^2\times H_D^{-2}$. We thus consider the solution of the adjoint linear problem
    \begin{align*}
        \left\{\begin{array}{rl}
            \partial_t^2\Phi+\Delta_g^2\Phi+\beta \Phi+\mathfrak{f}_s'(x, 0)\Phi=0,  &\ (t, x)\in [0, T]\times \Int(\M),  \\
            \Phi_{|_{\partial\M}}=\Delta \Phi_{|_{\partial\M}}=0,            &\ (t, x)\in [0, T]\times\partial\M,\\
            (\Phi, \partial_t\Phi)(0)=(\Phi_0, \Phi_1),     &\ x\in \M,
        \end{array}\right.
    \end{align*}    
    and the nonlinear control problem
    \begin{align*}
        \left\{\begin{array}{rl}
            \partial_t^2 z+\Delta_g^2 z+\beta z+\mathfrak{f}(x, z)=\gamma^2(x)\Phi,  &\ (t, x)\in [0, T]\times \Int(\M),\\
            z_{|_{\partial\M}}=\Delta z_{|_{\partial\M}}=0,           &\ (t, x)\in [0, T]\times\partial\M,\\
            (z, \partial_t z)(T)=(0, 0), &\ x\in \M.
        \end{array}\right.
    \end{align*}

    \paragraph{\emph{Step 1. Splitting.}} Let us introduce the operator
    \begin{align*}
        \begin{array}{cccl}
           \Ld_{\Nlin}:  & L^2\times H_D^{-2} & \longrightarrow & H_D^2\times L^2 \\
               & (\Phi_0, \Phi_1) & \longmapsto & \Ld_{\Nlin}(\Phi_0, \Phi_1):=(z, \partial_t z)(0).
        \end{array}
    \end{align*}
    We aim to perform a perturbative argument. Let us split $z=v+\Psi$, where $\Psi$ is solution of
    \begin{align*}
        \left\{\begin{array}{rl}
            \partial_t^2\Psi+\Delta_g^2\Psi+\beta \Psi+\mathfrak{f}_s'(x, 0)\Psi=\gamma^2(x)\Phi,  &\ (t, x)\in [0, T]\times \Int(\M),  \\
            \Psi_{|_{\partial\M}}=\Delta \Psi_{|_{\partial\M}}=0,            &\ (t, x)\in [0, T]\times\partial\M,\\
            (\Psi, \partial_t\Psi)(T)=(0, 0),     &\ x\in \M.
        \end{array}\right.
    \end{align*}
    This corresponds to the linear control and $(\Psi, \partial_t \Psi)(0)=\Ld(\Phi_0, \Phi_1)$. The function $v$ is solution of
    \begin{align*}
        \left\{\begin{array}{rl}
            \partial_t^2 v+\Delta_g^2 v+\beta v+\mathfrak{f}_s'(x, 0)v=-R(z)  &\ (t, x)\in [0, T]\times \Int(\M),\\
            v_{|_{\partial\M}}=\Delta v_{|_{\partial\M}}=0            &\ (t, x)\in [0, T]\times\partial\M,\\
            (v, \partial_t v)(T)=(0, 0) &\ x\in \M.
        \end{array}\right.
    \end{align*}
    with $R(z)=\mathfrak{f}(x, z)-\mathfrak{f}_s'(x, 0)z$. Since $\Phi\in C([0, T], L^2(\M))$ we readily get that $z$, $v$ and $\Psi$ belong to $C([0, T], H_D^2)\cap C^1([0, T], L^2(\M))$. We can write
    \begin{align*}
        \Ld_{\Nlin}(\Phi_0, \Phi_1)=\mc{K}(\Phi_0, \Phi_1)+\Ld(\Phi_0, \Phi_1),
    \end{align*}
    where $\mc{K}(\Phi_0, \Phi_1)=(v, \partial_t v)(0)\in H_D^2\times L^2(\M)$. Note that $\Ld_{\Nlin}(\Phi_0, \Phi_1)=(z_0, z_1)$ is equivalent to $(\Phi_0, \Phi_1)=-\Ld^{-1}\mc{K}(\Phi_0, \Phi_1)+\Ld^{-1}(z_0, z_1)$. Let us define the operator $\B: L^2\times H_D^{-2} \longrightarrow L^2\times H_D^{-2}$ by
    \begin{align*}
        \B(\Phi_0, \Phi_1)=-\Ld^{-1}\mc{K}(\Phi_0, \Phi_1)+\Ld^{-1}(z_0, z_1).
    \end{align*}
    Thus the nonlinear problem $\Ld_{\Nlin}(\Phi_0, \Phi_1)=(z_0, z_1)$ is equivalent to find a fixed point of $\B$.
    \medskip
    \paragraph{\emph{Step 2. Fixed point estimates}} We first verify that the operator $\B$ is compact. This in turns boils down to prove that the operator $\mc{K}$ from $L^2\times H_D^{-2}$ onto its dual is compact. Indeed, exploiting the regularity of the nonlinearity \cref{prop:regnonlinearity} we readily get that $R(z)\in L^1([0, T], H_D^\veps)$ for some $\veps>0$, which lead us to $v\in C^0([0, T], H_D^{2+\veps})\cap C^1([0, T], H_D^\veps)$ with a bound on $v$ in such space in terms of $\norm{(\Phi_0, \Phi_1)}_{L^2\times H_D^{-2}}$. First, since $\mathfrak{f}(x, 0)=0$ on $\partial\M$, from the regularity estimate \eqref{ineq:FlipXsigma}, $\norm{R(z)}_{L^1([0, T], H_D^\veps)}$ is bounded by a function of $\norm{(z, \partial_t z)}_{C^0([0, T], X)}$, which in turns is bounded by a function of $\norm{\Phi}_{L^1([0, T], L^2(\M))}$ as a consequence of the well-posedness estimates of \cref{prop:wellposed}, from which linear energy estimates ensure the veracity of the claim.

    Aiming to use Schauder's fixed point theorem, we need to find $R>0$ such that 
    \begin{align*}
        \norm{\B(\Phi_0, \Phi_1)}_{L^2\times H_D^{-2}}\leq R,\ \forall (\Phi_0, \Phi_1)\in\mathbb{B}_R(L^2\times H_D^{-2}).
    \end{align*}    
    Since $\Ld$ is an isomorphism, we have
    \begin{align*}
        \norm{\B(\Phi_0, \Phi_1)}_{L^2\times H_D^{-2}}\leq C\big(\norm{\mc{K}(\Phi_0, \Phi_1)}_{H_D^2\times L^2}+\norm{(z_0, z_1)}_{H_D^2\times L^2}\big).
    \end{align*}
    To estimate $\mc{K}(\Phi_0,\Phi_1)$, we look at the equation satisfied by $v$, by energy estimates and \cref{app:lem:NLestimate},
    \begin{align*}
        \norm{(v, \partial_t v)(0)}_{H_D^2\times L^2}\leq C\norm{R(z)}_{L^1([0, T], L^2(\M))}\leq C\norm{z}_{L^\infty([0, T]\times \M)}\norm{(z, \partial_t z)}_{C^0([0, T], H_D^2\times L^2)}.
    \end{align*}
    From wellposedness estimates for the nonlinear system satisfied by $z$ (see \cref{prop:wellposed} and \cref{rk:dualityWP}),
    \begin{align*}
        \norm{(z, \partial_t z)}_{C^0([0, T], H_D^2\times L^2)}\leq C\norm{\gamma \Phi}_{L^1([0, T], L^2(\M))}\leq C\norm{(\Phi_0, \Phi_1)}_{L^2\times H_D^{-2}}.
    \end{align*}
    From this and Sobolev embedding, it follows
    \begin{align*}
        \norm{\B(\Phi_0, \Phi_1)}_{L^2\times H_D^{-2}}&\leq C\norm{z}_{L^\infty([0, T]\times\M)}\norm{(\Phi_0, \Phi_1)}_{L^2\times H_D^{-2}}+C\norm{(z_0, z_1)}_{H_D^2\times L^2}\\
        &\leq C\norm{(\Phi_0, \Phi_1)}_{L^2\times H_D^{-2}}^2+C\norm{(z_0, z_1)}_{H_D^2\times L^2}.
    \end{align*}
    Thus, provided that $\norm{(z_0, z_1)}_{H_D^2\times L^2}$ is small enough, there exists $R>0$ such that $\B$ reproduces the ball of radius $R>0$ in $X'$. This completes the proof.
\end{proof}

\subsection{Semiglobal controllability} As already pointed out, we follow the proof of \cite{JL14}. In what follows, given $U_0$ and $U_1$ in $X$, we will write $U_0\leadsto U_1$, to say that, there exist $T>0$ and a control $g\in L^1([0, T], L^2(\M))$ such that the unique solution of \eqref{eq:nlp-control} with initial data $U_0$ satisfies $(u, \partial_t u)(T)=U_1$. Whenever is needed, the control $g$ and the amount of time spent $T$ will be explicit.

In the following, we summarize some control properties we will use throughout the following proofs.
\begin{property}\label{property:localcontrol}
    By \cref{thm:NLlocalcontrol}, the nonlinear system \eqref{eq:nlp-control} is locally controllable in a neighborhood $\mc{N}(e, 0)$ of an equilibrium point $(e, 0)$. Up to shrinking the neighborhood, we can assume without loss of generality that $\mc{N}(e, 0)$ is a ball.
\end{property}

\begin{property}\label{property:dampedforward}
 If $U(t)=(u, \partial_t u)(t)$ is solution to the damped equation \eqref{eq:nlp-stab}, by translation in time, then $U(t_0)\leadsto U(t_0+T)$ with control $g_+(t, \cdot)=-\gamma \partial_t u(t_0+t, \cdot)$ acting on $[0, T]$.
\end{property}

\begin{property}\label{property:dampedbackward}
    By reversing the flow, $(u, -\partial_t u)(t_0)\leadsto (u, -\partial_t u)(t_0-T)$ with control $g_-(t, \cdot)=\gamma\partial_t u(t_0-t, \cdot)$ on $[0, T]$.
\end{property}

The following result, called \emph{double U-turn argument} in \cite{JL14}, tell us how to travel inside the attractor around equilibrium points. The main idea is that, since the damped plate has a gradient structure, it drives any state toward an equilibrium forward in time and toward another equilibrium backward in time; alternating these dynamics and combining them with local control, we can make two \emph{U-turns} near equilibria and thereby connecting any two points of the trajectory inside the compact attractor.

\begin{proposition}\label{prop:uturn}
    If $U(t)=(u, \partial_t u)(t)$ is a solution to the damped plate \eqref{eq:nlp-stab} that belongs to the compact global attractor $\A$ for each $t\in \R$, then $U(t_0)\leadsto U(t_1)$ for any $t_0$, $t_1\in \R$.
\end{proposition}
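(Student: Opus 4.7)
The plan is to implement the \emph{double U-turn} argument of \cite{JL14}, combining the gradient structure of the damped plate (\cref{thm:attractorA}) with Properties~\ref{property:localcontrol}--\ref{property:dampedbackward}. If $t_0\leq t_1$, the conclusion is immediate from \cref{property:dampedforward} applied on the interval $[0,t_1-t_0]$ with the damped feedback $g_+(t,\cdot)=-\gamma\partial_t u(t_0+t,\cdot)$. From now on I therefore consider the genuinely nontrivial case $t_1<t_0$, where one has to travel backward in time along the orbit while the only allowed input is a localized control supported in $\widetilde{\omega}$.

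Since $U$ is a complete trajectory contained in the compact attractor $\A$ and the dynamical system is gradient, the standard theory of gradient systems (\cref{appendix:DS}) ensures that the $\omega$-limit and $\alpha$-limit sets of the orbit are nonempty, compact, and consist of equilibria of the form $(e,0)$. I then pick any $(e_+,0)$ in the $\omega$-limit set and $(e_-,0)$ in the $\alpha$-limit set, together with the ball-shaped neighborhoods $\mc{N}(e_\pm,0)$ provided by \cref{property:localcontrol}. Because along suitable sequences $\tau_n^\pm\to\pm\infty$ the states $U(\tau_n^\pm)$ converge to $(e_\pm,0)$ and the velocity components $\partial_t u(\tau_n^\pm)$ tend to $0$ in $L^2(\M)$, I can select $T^+\geq t_0$ and $T^-\leq t_1$ such that the four points
\begin{align*}
    \bigl(u(T^+),\pm\partial_t u(T^+)\bigr)\in\mc{N}(e_+,0), \qquad \bigl(u(T^-),\pm\partial_t u(T^-)\bigr)\in\mc{N}(e_-,0).
\end{align*}

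The controlled chain is then
\begin{align*}
    U(t_0)\overset{(1)}{\leadsto}U(T^+)\overset{(2)}{\leadsto}\bigl(u(T^+),-\partial_t u(T^+)\bigr)\overset{(3)}{\leadsto}\bigl(u(T^-),-\partial_t u(T^-)\bigr)\overset{(4)}{\leadsto}U(T^-)\overset{(5)}{\leadsto}U(t_1),
\end{align*}
where steps (1) and (5) use the forward damped \cref{property:dampedforward}, steps (2) and (4) are the two \emph{U-turns} at the equilibria $(e_\pm,0)$ supplied by the local controllability \cref{property:localcontrol}, and step (3) uses the reversed-damping \cref{property:dampedbackward} over the time span $T^+-T^-$. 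Concatenating the five piecewise-defined controls produces a single $L^\infty$ control supported in $\widetilde{\omega}$ on a finite time interval that drives $U(t_0)$ to $U(t_1)$.

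The only slightly delicate verification is that the U-turn steps (2) and (4) are permitted, i.e.\ that both a point $(u(T^\pm),\partial_t u(T^\pm))$ and its velocity-flipped counterpart lie \emph{simultaneously} in the local-controllability neighborhood. This is the main conceptual obstacle, and it is resolved as follows: on a gradient dynamical system, the velocity component of any element of the $\omega$- or $\alpha$-limit set of a precompact orbit necessarily vanishes (otherwise the energy would strictly decrease across that point), hence $\partial_t u(T^\pm)\to 0$ in $L^2(\M)$ as $|T^\pm|\to\infty$; consequently both flipped states become arbitrarily close to $(e_\pm,0)$ and thus belong to any prescribed ball around the equilibrium. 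The remaining bookkeeping (summation of the intermediate times, concatenation of the controls, and preservation of the $\widetilde{\omega}$-support) is routine.
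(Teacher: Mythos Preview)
Your proof is correct and follows essentially the same double U-turn strategy as the paper: forward damping to a neighborhood of a forward equilibrium, local control to flip the velocity, backward damping to a neighborhood of a backward equilibrium, local control to flip again, and forward damping to the target. The only minor difference is in how you justify that both $(u(T^\pm),\pm\partial_t u(T^\pm))$ lie in $\mc{N}(e_\pm,0)$: you argue via $\partial_t u(\tau_n^\pm)\to 0$ along the approach sequences, whereas the paper simply observes that since $\mc{N}(e_\pm,0)$ is a ball centered at $(e_\pm,0)$, the isometry $(u,v)\mapsto(u,-v)$ of $X$ fixes the center and hence preserves membership in the ball---so once $(u(T^\pm),\partial_t u(T^\pm))\in\mc{N}(e_\pm,0)$, the flipped state is automatically there too, with no need to track the velocity separately.
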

\begin{proof}
    If $t_1\geq t_0$, we just use \cref{property:dampedforward}.

    If $t_1<t_0$, by \cref{thm:attractorA} the damped plate \eqref{eq:nlp-stab} generates a gradient dynamical system and thus LaSalle's invariance principle \cref{app:thm:lasalle} implies that there exist $t_-<t_1<t_0<t_+$ and two equilibrium points $e_-$ and $e_+$ such that $U(t_\pm)$ belongs to a neighborhood of $(e_\pm, 0)$, denoted by $\mc{N}(e_\pm, 0)$. Observe that the forward time exists directly from the gradient structure, whereas the backward time additionally uses the compactness of the attractor. Then, by \cref{property:dampedforward}, $U(t_0)\leadsto (u, \partial_t u)(t_+)$ with control $g=-\gamma\partial_t u(t_0+\cdot, \cdot)$. Since $(u, \partial_t u)(t_+)\in \mc{N}(e, 0)$ and such neighborhood is assumed to be a ball centered in $(e_+, 0)$, $(u, -\partial_t u)(t_+)$ also belongs to such neighborhood and by local control \cref{property:localcontrol}, $(u, \partial_t u)(t_+)\leadsto (u, -\partial_t u)(t_+)$. By \cref{property:dampedbackward}, $(u, -\partial_t u)(t_+)\leadsto (u, -\partial_t u)(t_-)\in \mc{N}(e_-, 0)$. Once again, as $(u, \partial_t u)(t_-)$ also belongs to $\mc{N}(e_-, 0)$, by local control \cref{property:localcontrol}, $(u, -\partial_t u)(t_-)\leadsto (u, \partial_t u)(t_-)$. Finally, by \cref{property:dampedforward}, $(u, \partial_t u)(t_-)\leadsto U(t_1)$ with control $g=-\gamma^2(x)\partial_t u(t_-+\cdot, \cdot)$.
\end{proof}

\subsubsection{Reachable set of an equilibrium} Given $U_0\in X$, let us introduce the reachable set from $U_0$, denoted by $\mc{R}(U_0)$, as the set of points $U_1\in X$ such that $U_0\leadsto U_1$.

\begin{lemma}\label{lem:reachopen}
    The reachable set of an equilibrium point $(e, 0)$ is open in $X$.
\end{lemma}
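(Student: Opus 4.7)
The plan is to prove openness of $\mc{R}(e, 0)$ by a perturbation argument around an arbitrary point of the reachable set, combining time reversibility of the plate equation, the Lipschitz continuity of the controlled flow from \cref{prop:wellposed}, and the local controllability near the equilibrium provided by \cref{thm:NLlocalcontrol}. Fix $U_1 \in \mc{R}(e, 0)$: there exist $T_1 > 0$ and a control $g_1$ such that the solution of \eqref{eq:nlp-control} with initial data $(e, 0)$ and control $g_1$ satisfies $(u, \partial_t u)(T_1) = U_1$. I will show that every $\widetilde{U}_1$ in a sufficiently small neighborhood of $U_1$ in $X$ also belongs to $\mc{R}(e, 0)$.

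The key structural observation is the time-reversal invariance of \eqref{eq:nlp-control}: letting $\sigma(u, v) := (u, -v)$, if $U(\cdot)$ solves \eqref{eq:nlp-control} on $[0, T_1]$ with control $g$, then $t \mapsto \sigma(U(T_1 - t))$ solves the same equation with control $g(T_1 - \cdot)$. Applied to the reference trajectory, this produces a solution going from $\sigma(U_1)$ to $\sigma(e, 0) = (e, 0)$ at time $T_1$ under the time-reversed control $\tilde{g}_1(t) := g_1(T_1 - t)$.

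Given a candidate target $\widetilde{U}_1$ near $U_1$, I would let $\widetilde{W}$ denote the solution of \eqref{eq:nlp-control} with initial data $\sigma(\widetilde{U}_1)$ and control $\tilde{g}_1$, and set $\widetilde{V} := \sigma(\widetilde{W}(T_1))$. The Lipschitz continuity on bounded sets of the flow in \cref{prop:wellposed} yields $\|\widetilde{V} - (e, 0)\|_X \leq C \|\widetilde{U}_1 - U_1\|_X$, so $\widetilde{V}$ can be made arbitrarily close to $(e, 0)$ by restricting to a small enough neighborhood of $U_1$. Applying the time-reversal symmetry once more to $\widetilde{W}$ itself, one verifies that the forward solution of \eqref{eq:nlp-control} with initial data $\widetilde{V}$ and control $g_1$ reaches $\widetilde{U}_1$ exactly at time $T_1$.

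To close the argument, I invoke \cref{thm:NLlocalcontrol} to obtain a neighborhood $\mc{N}(e, 0)$ such that every point in $\mc{N}(e, 0)$ can be reached from $(e, 0)$ in time $2T$ by some control supported in $\widetilde{\omega}$. After further shrinking the neighborhood of $U_1$ so that $\widetilde{V} \in \mc{N}(e, 0)$, let $g_2$ be such a control. Concatenating $g_2$ on $[0, 2T]$ with a time-shifted copy of $g_1$ on $[2T, 2T + T_1]$ produces a control that drives $(e, 0)$ to $\widetilde{V}$ and then to $\widetilde{U}_1$, so $\widetilde{U}_1 \in \mc{R}(e, 0)$. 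The only genuinely delicate point, rather than a real obstacle, is bookkeeping the time-reversal symmetry so that the backward and forward uses of $g_1$ match; once this is done, everything reduces to the quantitative continuity estimate of \cref{prop:wellposed} and the local controllability statement \cref{thm:NLlocalcontrol}.
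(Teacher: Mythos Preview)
Your proof is correct and follows essentially the same route as the paper's: both use time reversibility of the controlled plate to pull a point near $U_1$ back to a point near $(e,0)$, apply local controllability there via \cref{thm:NLlocalcontrol}, and then run the original control forward. The only cosmetic difference is that the paper packages the ``go back'' step by noting that the fixed-control flow map $\Psi_{g,T}$ is a homeomorphism (hence open), whereas you construct its inverse explicitly via the conjugation $\sigma\circ\Psi_{\tilde g_1,T_1}\circ\sigma$.
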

\begin{proof}
    Let $U_1\in \mc{R}(e, 0)$. By definition, $U(0)=(e, 0)\leadsto U(T)=U_1$ with some control $g\in L^1([0, T], L^2(\M))$ in some time $T>0$. By holding them fixed, the wellposedness result of the controlled plate \eqref{eq:nlp-control} implies that the map $\Psi_{g, T}: X\longrightarrow X$ with $\Psi_{g, T}(U_0)=U(T; g)$ is continuous. Moreover, due to the reversibility in time of the uncontrolled plate equation, $\Psi_{g, T}$ is an homeomorphism in $X$ and thus an open map. In particular, any open neighborhood $\mc{W}$ of $(e, 0)$ is mapped into an open neighborhood $\mc{V}:=\Psi_{g, T}(\mc{W})$ of $U_1$. Thus, by definition, for any $U^*\in \mc{W}$ and $U_T\in\mc{V}$, we have $U^*\leadsto U_T$ with control $g$. Further, up to shrinking these neighborhoods if necessary, by local control \cref{property:localcontrol} and reversibility of the uncontrolled plate, there exists a local control $g_1$ such that $(e, 0)\leadsto U^*$. By applying successive controls $g_1$ and $g$ we have $(e, 0)\leadsto U_T$. This proves that $\mc{V}\subset\mc{R}(e,0)$ and the conclusion follows.
\end{proof}

In what follows, for $t\geq 0$ we will denote by $S_t$ the nonlinear semiflow induced by the damped plate equation \eqref{eq:nlp-stab}. After \cref{prop:wellposed}, is clear that $(S_t)_{t\geq 0}$ is a continuous semigroup on $X$.
    
\begin{lemma}\label{lem:reachclosed}
    The reachable set restricted to $\A$ of an equilibrium point $(e, 0)$, namely $\mc{R}(e, 0)\cap \A$, is closed in $\A$.
\end{lemma}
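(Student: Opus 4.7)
The goal is to show that if $(U_n)_{n\in\N} \subset \mc{R}(e,0) \cap \A$ converges in $X$ to some $U_\infty \in \A$, then $U_\infty \in \mc{R}(e,0)$. The plan is to produce a control path $(e,0) \leadsto U_\infty$ by chaining: the given control to some $U_n$, the damped flow to push $U_n$ into a small neighborhood of an equilibrium where local controllability applies, then a local control to transfer onto the orbit of $U_\infty$, and finally the U-turn argument of \cref{prop:uturn} to slide back to $U_\infty$ inside the attractor.

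More concretely, since $U_\infty \in \A$ and $\A$ is invariant under the damped semiflow, there exists a complete trajectory $\tilde U: \R \to \A$ of \eqref{eq:nlp-stab} with $\tilde U(0) = U_\infty$ and $\tilde U(t) = S_t(U_\infty)$ for $t \geq 0$. By the gradient structure (\cref{thm:attractorA}) and LaSalle's invariance principle (\cref{app:thm:lasalle}), there exist an equilibrium $(e_+, 0)$ and a time $T > 0$ such that $\tilde U(T) \in \mc{N}(e_+, 0)$, where $\mc{N}(e_+, 0)$ is the open local-controllability neighborhood provided by \cref{property:localcontrol}. The continuity of the semiflow $S_T: X \to X$ then yields $S_T(U_n) \to \tilde U(T)$, so that $S_T(U_n) \in \mc{N}(e_+, 0)$ for all $n$ large enough, which is the critical approximation ingredient.

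For such an $n$, the control path is assembled in four legs: first, $(e,0) \leadsto U_n$ by hypothesis; second, $U_n \leadsto S_T(U_n)$ by forwarding the damped flow (\cref{property:dampedforward}); third, $S_T(U_n) \leadsto \tilde U(T)$ by local controllability at the equilibrium $(e_+, 0)$, both endpoints lying in $\mc{N}(e_+, 0)$ (\cref{property:localcontrol}); fourth, and most importantly, $\tilde U(T) \leadsto \tilde U(0) = U_\infty$ by applying \cref{prop:uturn} to the complete attractor trajectory $\tilde U$, with $t_0 = T$ and $t_1 = 0$. Concatenating these legs yields $(e,0) \leadsto U_\infty$, which is precisely the conclusion $U_\infty \in \mc{R}(e, 0)$.

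The only non-trivial step is the fourth leg, since it runs backward in time under a non-reversible semiflow. This is exactly the situation handled by the double U-turn argument, which exploits two equilibria on the $\A$-trajectory through $U_\infty$ (a forward limit and a backward limit, the latter available because $\tilde U$ extends to all of $\R$ inside the compact attractor) together with velocity reversal by local control and the backward dynamics of the reversed trajectory (\cref{property:dampedbackward}); crucially, \cref{prop:uturn} has already been established, so this step reduces to directly invoking it. Everything else relies on standard continuity of the semiflow and the fact that $\mc{N}(e_+,0)$ is open, so no further technical estimates are required.
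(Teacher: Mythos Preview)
Your proof is correct and follows essentially the same approach as the paper: both argue by pushing the approximating sequence and the limit forward under the damped flow into the local-control neighborhood of some equilibrium, then invoke local controllability to jump onto the limit trajectory, and finally appeal to \cref{prop:uturn} to return to $U_\infty$ along the attractor. The only differences are notational and expository.
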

\begin{proof}
    Let $(U_1^n)_n$ be a sequence in $\A$ such that $(e, 0)\leadsto U_1^n$ with control $g_n$ on $[0, T_n]$ and $U_1^n\to U_1$ with $U_1\in\A$. Let $U$ be solution of the damped plate \eqref{eq:nlp-stab} with initial condition $U(0)=U_1$. Since the damped plate has a gradient structure, there exists $T>0$ large enough and an equilibrium point $(\widetilde{e}, 0)$ such that $U(T)$ belongs to $\mc{N}(\widetilde{e}, 0)$ neighborhood of $(\widetilde{e}, 0)$. By continuity of the nonlinear semiflow $(S_t)_{t\geq 0}$ there exists $N$ large enough such that $S_TU_1^N\in \mc{N}(\widetilde{e}, 0)$.

    We now apply several successive controls. First, by definition $(e, 0)\leadsto U_1^N$ with $g_N$. Second, we use \cref{property:dampedforward} to get $U_1^N\leadsto S_TU_1^N$ with control $g=\gamma^2(x)\partial_t u(T_N+\cdot, \cdot)$ and $T>0$ previously chosen, as we arrive to the neighborhood of an equilibrium. Third, by local control \cref{property:localcontrol}, we have $S_TU_1^N\leadsto U(T)\in \mc{N}(\widetilde{e}, 0)$ with $\widetilde{g}$. Finally, given that $U_1\in \A$ by the U-turn result \cref{prop:uturn} we have $U(T)\leadsto U_1$ with $\widetilde{g}_1$. Concatenating these controls, we readily get that $U_1\in\mc{R}(e, 0)$ and thus $\mc{R}(e, 0)$ is closed in $\A$.
\end{proof}

\begin{proof}[Proof of \cref{thm:NLP-control}]
    Let $(e, 0)\in \A$ be an equilibrium point of \eqref{eq:nlp}, which exists due to the gradient structure of the damped plate and that $\A\neq \emptyset$. After \cref{lem:reachopen} and \cref{lem:reachclosed}, $\mc{R}(e, 0)\cap \A$ is both open and closed in $\A$; since the attractor $\A$ (see \cref{app:thm:attractor}) is connected and $\mc{R}(e, 0)\cap\A\neq\emptyset$, this implies that $\mc{R}(e, 0)\cap\A=\A$. Hence, by the very definition of reachable set, we can go from the neighborhood of any equilibrium point to the neighborhood of any other equilibrium point. 
    \medskip
    \paragraph{\emph{Step 1. Control strategy.}} Let $U_0$ and $U_1$ in $X$. Let $U$ and $\widetilde{U}$ be two trajectories of the damped plate \eqref{eq:nlp-stab} with $U(0)=U_0$ and $(\widetilde{u}, -\partial_t \widetilde{u})(0)=U_1$. Since the system is gradient, for $T>0$ large enough, both $U(T)$ and $\widetilde{U}(T)$ belong to neighborhoods of equilibrium points $\mc{N}(e_1, 0)$ and $\mc{N}(e_2, 0)$, respectively. In particular, we can assume that these neighborhoods are balls and thus $(\widetilde{u}, -\partial_t\widetilde{u})(T)\in \mc{N}(e_2, 0)$. The control strategy is the following. First $U_0\leadsto U(T)$ arriving to a neighborhood of an equilibrium. Then, since the reachable set of any equilibrium covers $\A$, we travel through the attractor $U(T)\leadsto (\widetilde{u}, -\partial_t\widetilde{u})(T)$ as both states belong to $\A$. Finally, $(\widetilde{u}, -\partial_t\widetilde{u})(T)\leadsto U_1$ by using the backward flow.
    \medskip
    \paragraph{\emph{Step 2. Uniform control time.}} Regarding the time of controllability, we now show that it only depends on the size of the initial data. From \cref{thm:NLlocalcontrol}, let $\tau>0$ be the time of local control in a neighborhood of an equilibrium point. Note that is does not depend on the chosen equilibrium point. Also, let us denote by $\mc{E}$ the set of equilibria and note that it is compact, since it is closed and contained in the compact attractor $\A$. Let $r(e, 0)>0$ denote the radius of local controllability around an equilibrium $(e, 0)\in \mc{E}$ given by \cref{thm:NLlocalcontrol}. Let us introduce for $U\in \A$, the function
    \begin{align*}
        \ld(U)=\inf\left\{t\geq 0\ |\ S_t U\in \bigcup_{(e, 0)\in\mc{E}} \mathbb{B}_{r(e, 0)}(e, 0)\right\},
    \end{align*}
    where $\mathbb{B}_{r(e, 0)}(e, 0)$ denotes the ball centered at $(e, 0)$ of radius $r(e, 0)$ in $X$. This quantity is well-defined due to the gradient structure of the damped plate. We claim that there exists $\veps>0$ such that $T_\A:=\sup_{U\in \mc{V}(\A)} \ld(U)<\infty$ where $\mc{V}(\A)$ is an $\veps$-neighborhood of $\A$. By contradiction, let us consider the sequences $T_n\to+\infty$ and $(U_0^n)_n\subset X$ with $d(U_0^n, \A)\leq 1/n$, such that $S_tU_0^n$ does not belong to any ball $\mathbb{B}_{r^*}(e, 0)$ for $t\in [0, T_n]$. Since $\A$ is compact, up to extracting a subsequence, we can assume that $U_0^n\to U_0$ with $U_0\in \A$. Given that $S_t$ is gradient, $S_TU_0\in \mathbb{B}_{r^*}(e, 0)$ for $T$ large enough. By continuity, $S_TU_0^n\in\mathbb{B}_{r^*}(e, 0)$ for $n$ large enough. This is a contradiction to the definition of the $T_n$'s and that $T_n\to+\infty$.

    Let $R>0$ be the radius of the ball to which the data to be controlled belongs to. By the gradient structure of the damped plate, there exists $T_0=T_0(R)$ such that $S_{T_0}\mathbb{B}_{R}(X)\subset \mc{V}(\A)$. Let us note that the set of equilibrium points $\mc{E}$ can be covered by the union of the controllability balls $\mathbb{B}_{r(e, 0)}(e, 0)$ where $(e, 0)$ runs over $\mc{E}$. By compactness, it can be covered by finitely many of them
    \begin{align*}
        \mc{E}\subset \bigcup_{j=1}^N \mathbb{B}_{r(e_j, 0)}(e_j, 0),
    \end{align*}
    where $(e_j, 0)\in \mc{E}$. With this at hand, once we are in $\mc{V}(\mc{A})$ we wait at most $T_\A$ to enter one of the controllability balls $\mathbb{B}_{r(e_j, 0)}(e_j, 0)$. The transition between each two balls costs at most $T_\A+\tau$ and we can perform this transition at most $N$ times. Then, running the backward flow costs $T_0+T_\A$. We readily see that the total control time is bounded by
    \begin{align*}
        T_{\max}(R)=2T_0(R)+2T_\A+N(T_\A+\tau),
    \end{align*}
    and it only depends on the radius of the given ball.
\end{proof}

\appendix

\section{Nonlinear composition estimates}

The following lemma establishes the regularity of the composition operator, often called Nemytskii operator, with respect to classical Sobolev spaces.

\begin{lemma}\label{app:lem:NLcomposition}
    Let $N\in\N$ and $g: \M\times \R\to \R$ be a $C^{N+1}(\M\times\R, \R)$ function, with $g(\cdot, 0)=0$. If $u\in L^\infty(\M)\cap H^s(\M)$, with $s\in (0, N)$, then $g(\cdot, u)\in L^\infty(\M)\cap H^s(\M)$ and $\norm{g(\cdot, u)}_{H^s}\leq C\norm{u}_{H^s}$, where $C$ only depends on $g$ and $\norm{u}_{L^{\infty}}$.
\end{lemma}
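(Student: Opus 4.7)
The plan is to reduce the statement to a local estimate and then split according to whether $s$ is an integer. Compactness of $\M$ together with a subordinate partition of unity reduces matters to the analogous bound in $H^s(\R^d)$ for functions supported in a fixed compact set. The $L^\infty$ bound $\|g(\cdot,u)\|_{L^\infty}\leq C(\|u\|_{L^\infty})$ follows at once from $g(\cdot,0)=0$ and the continuity of $g$; the content of the lemma is thus contained in the Sobolev bound.

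For an integer $s=k\in\{1,\dots,N\}$, I would apply the multivariate Faà di Bruno formula to write $\partial^\alpha[g(x,u(x))]$, $|\alpha|=k$, as a finite sum of terms of the form
\[
(\partial_x^{\beta_0}\partial_\sigma^{j} g)(x,u(x))\prod_{i=1}^{j}\partial^{\gamma_i} u(x),\qquad |\beta_0|+\sum_i|\gamma_i|=k,\ |\gamma_i|\geq 1.
\]
Since $g\in C^{k+1}$, the composition factor is controlled in $L^\infty$ by a constant depending only on $g$ and $\|u\|_{L^\infty}$. The product of derivatives of $u$ is handled through the Gagliardo-Nirenberg inequality $\|\partial^{\gamma_i} u\|_{L^{p_i}}\leq C\|u\|_{H^k}^{|\gamma_i|/k}\|u\|_{L^\infty}^{1-|\gamma_i|/k}$ with $p_i=2k/|\gamma_i|$, combined with Hölder. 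The hypothesis $g(\cdot,0)=0$ enters through the factorization $g(x,\sigma)=\sigma\,h(x,\sigma)$ with $h(x,\sigma):=\int_0^1\partial_\sigma g(x,\tau\sigma)\,d\tau$: applying Leibniz to $g(x,u(x))=u(x)\,h(x,u(x))$ guarantees that one genuine factor of $u$ is always extracted, which upgrades the Gagliardo-Nirenberg exponents into the desired linear bound $\|g(\cdot,u)\|_{H^k}\leq C(\|u\|_{L^\infty})\|u\|_{H^k}$.

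For non-integer $s\in(0,N)\setminus\N$, the cleanest route is real interpolation. Setting $k=\lfloor s\rfloor<N$, the integer case applied at levels $k$ and $k+1$ shows that, on each sublevel set $\{w:\|w\|_{L^\infty}\leq M\}$, the Nemytskii map $w\mapsto g(\cdot,w)$ is Lipschitz from $H^k$ to $H^k$ and from $H^{k+1}$ to $H^{k+1}$, with constants depending only on $M$ and $g$. Standard nonlinear real interpolation between these two levels, or equivalently Bony's paraproduct decomposition applied directly to the factorization $g(\cdot,u)=u\,h(\cdot,u)$, then yields $\|g(\cdot,u)\|_{H^s}\leq C(\|u\|_{L^\infty})\|u\|_{H^s}$.

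The main obstacle will be preserving the \emph{linear} dependence of the estimate on $\|u\|_{H^s}$ with a constant depending only on $\|u\|_{L^\infty}$ and $g$: the Faà di Bruno expansion only produces sub-linear Gagliardo-Nirenberg contributions $\|u\|_{H^k}^\theta$ with $\theta<1$ for the terms where some derivatives fall on the $x$-variable, and it is precisely the factorization $g(x,u)=u\,h(x,u)$ afforded by $g(\cdot,0)=0$ that promotes these sub-critical contributions back into the linear regime, yielding the final estimate.
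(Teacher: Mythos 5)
Your reduction to $\R^d$ by a partition of unity and your treatment of integer $s$ are sound and follow the classical Moser-type scheme: Faà di Bruno plus Gagliardo--Nirenberg applied at the level $m=k-|\beta_0|$ (rather than $k$) already gives each term a bound $C(\norm{u}_{L^\infty})\norm{u}_{H^m}\leq C\norm{u}_{H^k}$, so the sub-linear contributions you worry about do not actually occur; the only terms genuinely requiring $g(\cdot,0)=0$ are those carrying no derivative of $u$ at all (including the $L^2$ bound on $g(\cdot,u)$ itself), which the factorization $g(x,\sigma)=\sigma h(x,\sigma)$ or the mean value theorem in $\sigma$ converts into $C(\norm{u}_{L^\infty})\norm{u}_{L^2}$. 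Note that this is a genuinely different route from the paper, which does not reprove the lemma from scratch: it imports the Littlewood--Paley/Meyer-multiplier proof of \cite{AG91} and only justifies the three adaptations needed (compact manifold via charts, the finite regularity $C^{N+1}$ because Meyer's multiplier lemma uses derivatives of $g_s'$ up to order $\lfloor s\rfloor+1\leq N$, and the harmless extra $x$-dependence).

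The gap is in the fractional case. ``Standard nonlinear real interpolation'' between the integer levels is not available off the shelf here, for two reasons. First, the local Lipschitz bound for $w\mapsto g(\cdot,w)$ on $H^k$ obtained from the bilinear Moser estimate has the form $\norm{g(\cdot,u)-g(\cdot,v)}_{H^k}\leq C(M)\big(1+\norm{u}_{H^k}+\norm{v}_{H^k}\big)\big(\norm{u-v}_{H^k}+\norm{u-v}_{L^\infty}\big)$; the unavoidable $\norm{u-v}_{L^\infty}$ factor means the Nemytskii map is not Lipschitz for the $H^k$ topology alone, so Tartar/Maligranda-type nonlinear interpolation theorems do not apply directly. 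Second, all your constants depend on $\norm{u}_{L^\infty}$, which is not dominated by $\norm{u}_{H^k}$ when $k\leq d/2$, and the decompositions $u=u_0+u_1$ entering the $K$-functional need not remain in the $L^\infty$-ball on which your endpoint estimates are valid. These obstructions can be circumvented (for instance by computing the $K$-functional with mollifications of $u$, which do preserve the $L^\infty$-ball, or by the difference-quotient characterization of $H^s$ for $s\in(k,k+1)$), but that is precisely the substance of the lemma and has to be carried out; as written, the fractional case is asserted rather than proved. Your fallback --- paralinearizing $g(\cdot,u)$ via Bony's decomposition / Meyer multipliers --- is the correct fix: it treats all $s\in(0,N)$ uniformly with no integer/fractional dichotomy, and it is exactly the proof of \cite{AG91} on which the paper relies.
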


The previous Lemma is actually written in \cite{AG91} for function in $H^s(\R^d)$ and $f\in C^{\infty}$, with no extra dependency in the $x$-variable. However, first, the same result holds for functions in $H^s(\M)$ when $\M$ is a compact manifold with boundary using the definition of the norm of $H^s(\M)$ by partition of unity and sum of the norm in $H^s(\R^d)$ of the functions in local coordinates and with extension. Second, concerning the requirement $g\in C^{N+1}$, we only notice that the Meyer's multiplier Lemma \cite[Lemma 2.2]{AG91} requires estimates of the derivatives of the multiplier up to $\floor{s}+1\leq N$. Since it is applied to $g'$ in \cite[Proposition 2.2]{AG91}, it requires $g'\in C^N$. Third, the only change to the proof when the extra $x$-dependency is added, is that new (mixed) derivatives of $g$ need to be taken into account when analyzing the derivatives of the Meyer's multiplier. However, they are all uniformly bounded by a constant depending on $\norm{u}_{L^\infty}$ and on the $L^\infty$-norm of derivatives of $g$ up to order $N+1$, hence not carrying any dyadic factor depending on the derivative's order at each level. Under these considerations, the proof follows verbatim that of \cite{AG91} and we thus omit it.

\begin{lemma}\label{app:lem:NLestimate}
    Let $g: \M\times\R\to \R$ be a $C^2(\M\times\R)$ function. Then, for any $u\in L^\infty(\M)\cap L^2(\M)$ it holds
    \begin{align*}
        \norm{g(\cdot, u)-g(\cdot, 0)-g_s'(\cdot, 0)u}_{L^2(\M)}\leq C\norm{u}_{L^\infty(\M)}\norm{u}_{L^2(\M)},
    \end{align*}
    where $C$ only depends on $g$ and $\norm{u}_{L^{\infty}}$.
\end{lemma}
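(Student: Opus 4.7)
The plan is to use Taylor's theorem with integral remainder in the second variable, pointwise in $x$, and then combine the uniform bound on the second derivative of $g$ with a crude $L^\infty\times L^2$ split. Concretely, since $g\in C^2(\M\times\R)$, for every $x\in\M$ and every $s\in\R$ one has
\begin{align*}
    g(x,s)-g(x,0)-g_s'(x,0)s=s^2\int_0^1 (1-\tau)\,g_{ss}''(x,\tau s)\,d\tau.
\end{align*}
Applying this with $s=u(x)$ gives, pointwise almost everywhere on $\M$,
\begin{align*}
    \bigl|g(x,u(x))-g(x,0)-g_s'(x,0)u(x)\bigr|\leq \tfrac{1}{2}\,|u(x)|^2\sup_{|\sigma|\leq\|u\|_{L^\infty}}|g_{ss}''(x,\sigma)|.
\end{align*}

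Next I would use that $\M$ is compact and $g_{ss}''\in C^0(\M\times\R)$, so that on the compact set $\M\times[-\|u\|_{L^\infty},\|u\|_{L^\infty}]$ the second derivative $g_{ss}''$ is bounded by some constant $M=M(g,\|u\|_{L^\infty})$. This yields the pointwise bound $|g(x,u)-g(x,0)-g_s'(x,0)u|\leq \tfrac{M}{2}|u(x)|^2$. Splitting $|u(x)|^2=|u(x)|\cdot|u(x)|\leq\|u\|_{L^\infty}|u(x)|$ and taking the $L^2$ norm finishes the proof:
\begin{align*}
    \|g(\cdot,u)-g(\cdot,0)-g_s'(\cdot,0)u\|_{L^2(\M)}\leq \tfrac{M}{2}\|u\|_{L^\infty(\M)}\|u\|_{L^2(\M)}.
\end{align*}

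There is essentially no obstacle here; the only mild point is to record that the constant $C$ depends on $g$ only through $\sup_{\M\times[-R,R]}|g_{ss}''|$ with $R=\|u\|_{L^\infty}$, which is the sense in which it depends on $\|u\|_{L^\infty}$.
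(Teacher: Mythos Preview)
Your proof is correct and follows essentially the same approach as the paper: a second-order Taylor expansion with integral remainder, a uniform bound on $g_{ss}''$ over the compact set $\M\times[-\|u\|_{L^\infty},\|u\|_{L^\infty}]$, and the split $|u|^2\leq\|u\|_{L^\infty}|u|$. The paper writes the remainder as a double integral $\int_0^1\int_0^\tau g_s''(\cdot,\eta u)u^2\,d\eta\,d\tau$, which is the same formula as yours after Fubini.
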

\begin{proof}
    The estimate follows by writing a second order development
    \begin{align*}
        g(\cdot, u)-g(\cdot, 0)-g_s'(\cdot, 0)u=\int_0^1\int_0^\tau g_s''(\cdot, \eta u)u^2d\eta d\tau,
    \end{align*}
    and by using that $g_s''$ is bounded in the compact set $\M\times\{|s|\leq \norm{u}_{L^\infty(\M)}\}$.
\end{proof}

\section{Dynamical systems}\label{appendix:DS}

Let $X$ be a Banach space. A continuous dynamical system or continuous semigroup on $X$, is a one-parameter family of mappings $(S_t)_{t \in \mathbb{R}_*^+}$ from $X$ into $X$ such that
\begin{itemize}
  \item $S(0)=I$;
  \item for any $t, s\in \mathbb{R}_*^+$, $S_{t+s} = S_t \circ S_s$;
  \item for any $t\in\mathbb{R}_*^+$, $S_t \in C^0(X, X)$;
  \item for any $x_0 \in X$, the application $t \mapsto S_tx_0$ is continuous from $\mathbb{R}_*^+$ to $X$.
\end{itemize}
If $\R_*^+$ is replaced by $\R$ above, we say that $(S_t)_{t\in \R}$ is a continuous group on $X$. 

For $x_0 \in X$, we define the positive trajectory $\gamma_+(x_0) = \{ S_tx_0, t \geq 0 \}$ and the negative trajectory $\gamma_-(x_0) = \{ S_tx_0, t \leq 0 \}$. To describe the asymptotic behavior we use the concept of an $\omega$-limit set. The set $\omega(x_0) := \bigcap_{t \geq 0} \overline{\{S(s)(x_0), s \geq t\}}$ is called the $\omega$-limit set of trajectories emanating from $x_0$. This is the set of points $x \in X$ so that there exists a sequence $t_n\to+\infty$ so that $S(t_n)x_0\to x$. Similarly, the $\alpha$-limit set is $\alpha(x_0) := \bigcap_{t \leq 0} \overline{\{S(s)(x_0), s \leq t\}}$. This is the set of points $x \in X$ so that there exists a sequence $(t_n)_n$ decreasing to $-\infty$ so that $S(t_n)x_0$ converges to $x$.

An equilibrium point is a point $e \in X$ so that $S_te = e$ for any $t \in \mathbb{R}_*^+$. We denote $\mathcal{E}$ the set of equilibrium points.

\begin{definition}
A nonempty set $\mathcal{A}\subset X$ is called a global attractor of the semigroup $(X, S_t)$ if:
\begin{itemize}
  \item $\mathcal{A}$ is a closed, bounded subset of $X$,
  \item $\mathcal{A}$ is invariant under the semigroup $(X, S_t)$ (i.e., $S_t\mathcal{A} = \mathcal{A}$ for every $t\geq 0$),
  \item $\mathcal{A}$ attracts every bounded subset $B$ of $X$ under the group $(X, S_t)$, that is for every $\mathcal{V}$ neighborhood of $\mathcal{A}$, there exists $t>0$ so that $S_tB \subset \mathcal{V}$.
\end{itemize}
If it exists, it is given by $\mathcal{A} = \{\text{bounded complete orbits of } S\}$.
\end{definition}

The case where there is a compact global attractor is of particular interest. The existence of such an attractor is an important dynamical property because it roughly says that the dynamics of the PDE for large times may be reduced to dynamics on a compact set, which is often finite-dimensional.

The following properties will be typical of systems that dissipate energy.

\begin{definition}
Let $(X, S_t)$ be a dynamical system. 
\begin{itemize}
    \item A closed set $B\subset X$ is said to be \emph{absorbing} for $(X, S_t)$ iff for any bounded set $D\subset X$ there exists $t_0(D)$ such that $S_tD\subset B$ for all $t\geq t_0(D)$.
    \item $(X, S_t)$ is \emph{point dissipative} iff there exists a bounded set $B$ so that for any $x_0 \in X$, there exists $t_0(x_0) \geq 0$ so that $S_tx_0 \in B$ for all $t \geq t_0(x_0)$,
    \item $(X, S_t)$ is \emph{asymptotically smooth} iff for any nonempty, closed, bounded set $B$ such that $S_tB\subset B$ for $t>0$ there exists a compact set $K\subset B$ such that $K$ attracts $B$; that is
    \begin{align*}
        \lim_{t\to+\infty} d_X(S_tB, K)=0,
    \end{align*}
    where $d_X(A, B)=\sup_{x\in A}\dist_X(x, B)$.
\end{itemize}
\end{definition}

\begin{remark}
    Note that the definition of \emph{asymptotic smoothness} is not uniform in the literature, as one may encounter a stronger notion called \emph{asymptotic compactness}. Here, our reference is \cite{Rau02}, where there is no distinction in between these two notions.
\end{remark}

We have following sequential characterization of asymptotic smoothness, which is often easier to verify than the aforementioned condition.

\begin{proposition}
    Assume that the Ladyzhenskaya condition holds: for every bounded sequence $(x_n)_n\subset X$ and every sequence $t_n\to+\infty$, the sequence $(S_{t_n}x_n)_n$ is relatively compact in $X$. Then $(X, S_t)$ is asymptotically smooth.
\end{proposition}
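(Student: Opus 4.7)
The plan is to take the attracting compact set to be the $\omega$-limit set
\begin{equation*}
K:=\omega(B)=\bigcap_{t\geq 0}\overline{\bigcup_{s\geq t}S_sB},
\end{equation*}
which, by a diagonal-extraction argument, coincides with the collection of all limits $y=\lim_{n\to\infty} S_{t_n}x_n$ for some $t_n\to+\infty$ and $x_n\in B$. Since $B$ is closed and positively invariant, $\bigcup_{s\geq t}S_sB\subset B$, and consequently $K\subset B$. For non-emptiness, I would apply the Ladyzhenskaya condition to the sequence $(S_nx_0)_n$ for an arbitrary $x_0\in B$: a convergent subsequence has its limit in $K$.

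To prove that $K$ is compact, let $(y_k)_k\subset K$. Using the sequential characterization, choose $t_k\geq k$ and $x_k\in B$ with $\norm{S_{t_k}x_k-y_k}_X<1/k$. Since $(x_k)_k$ is bounded and $t_k\to+\infty$, the Ladyzhenskaya hypothesis yields a subsequence along which $S_{t_k}x_k\to z\in X$. The triangle inequality then forces $y_k\to z$ along the same subsequence, and the sequential description of $K$ immediately gives $z\in K$. Hence every sequence in $K$ admits a convergent subsequence with limit in $K$, so $K$ is compact.

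The main step is to verify that $K$ attracts $B$, i.e.\ $d_X(S_tB,K)\to 0$ as $t\to+\infty$. I would argue by contradiction: if not, there exist $\veps>0$, $t_n\to+\infty$, and $x_n\in B$ with $\dist_X(S_{t_n}x_n,K)\geq\veps$. Since $(x_n)_n\subset B$ is bounded and $t_n\to+\infty$, the Ladyzhenskaya condition yields a subsequence along which $S_{t_n}x_n\to y$. By the sequential description of $K$, we obtain $y\in K$, which contradicts $\dist_X(S_{t_n}x_n,K)\geq\veps$ upon passing to the limit.

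The only real bookkeeping lies in the equivalence between the nested-intersection and the sequential definitions of $\omega(B)$. One direction is immediate, since if $y=\lim S_{t_n}x_n$ with $t_n\to+\infty$ and $x_n\in B$, then for each $T\geq 0$ eventually $S_{t_n}x_n\in\bigcup_{s\geq T}S_sB$, forcing $y\in\overline{\bigcup_{s\geq T}S_sB}$. For the converse, if $y$ lies in the nested intersection, then for each $n$ one selects a sequence $(S_{s_{n,k}}x_{n,k})_k$ with $s_{n,k}\geq n$, $x_{n,k}\in B$, and $S_{s_{n,k}}x_{n,k}\to y$ as $k\to\infty$; a diagonal choice $k=k_n$ ensuring $\norm{S_{s_{n,k_n}}x_{n,k_n}-y}_X<1/n$ produces the required sequences $t_n:=s_{n,k_n}\to+\infty$ and $x_n:=x_{n,k_n}\in B$. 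Beyond this routine verification, no genuine obstacle arises, as every compactness assertion reduces to a single application of the Ladyzhenskaya hypothesis to a bounded sequence paired with a divergent time-sequence.
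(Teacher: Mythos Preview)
Your argument is correct and is the standard one: take $K=\omega(B)$, use the Ladyzhenskaya condition to show it is nonempty, compact, contained in $B$, and attracts $B$, with the sequential characterization of $\omega(B)$ doing the bookkeeping. There is nothing to compare against, however: the paper states this proposition in the appendix as a known sequential characterization of asymptotic smoothness and does not supply a proof, deferring implicitly to the reference \cite{Rau02}. Your write-up would fill that gap cleanly.
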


With these definitions at hand, we have the following result ensuring the existence of a compact global attractor.

\begin{theorem}\cite[Theorem 2.26]{Rau02}\label{app:thm:attractorcharac}
The following assertions are equivalent.
\begin{itemize}
  \item $(X, S_t)$ admits a global compact attractor $\mathcal{A}$.
  \item $(X, S_t)$ is point dissipative, asymptotically smooth, and for any bounded set $B \subset X$, there exists $t_0 \geq 0$ such that $\overline{\bigcup_{t \geq t_0} S_tB}$ is bounded.
\end{itemize}
Moreover, $\mathcal{A}$ is connected.
\end{theorem}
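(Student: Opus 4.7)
The plan is to prove the equivalence in both directions and then deduce connectedness. For the easier implication (existence of a compact global attractor $\mathcal{A}$ implies the three listed properties), I would verify each condition in turn. Point dissipativity follows by letting $B$ be a bounded $\varepsilon$-neighborhood of $\mathcal{A}$ and using that $\mathcal{A}$ attracts every singleton. The third property is immediate: for bounded $B$, the attraction of $\mathcal{A}$ gives some $t_0$ with $S_tB\subset\mathcal{V}$ (bounded) for $t\geq t_0$, so $\overline{\bigcup_{t\geq t_0}S_tB}$ is bounded. For asymptotic smoothness, given a nonempty closed bounded forward-invariant $B$, I would set $K:=\omega(B)$; this lies inside $\mathcal{A}$ (hence compact), inside $\overline{B}=B$ by forward invariance, and attracts $B$ by the standard sequential properties of $\omega$-limit sets combined with forward invariance.

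For the harder implication, I would construct $\mathcal{A}$ as the $\omega$-limit of a suitable absorbing set. First, combine point dissipativity with the third condition to produce a bounded closed set $B_0$ that is \emph{absorbing}: $S_tB_0\subset B_0$ for all $t$ beyond some $t_1$, and every bounded $B\subset X$ enters $B_0$ eventually. Then set
\[
\mathcal{A}\;:=\;\omega(B_0)\;=\;\bigcap_{t\geq 0}\overline{\bigcup_{s\geq t}S_sB_0}.
\]
Asymptotic smoothness applied to $\bigcup_{s\geq t_1}S_sB_0$ yields a compact subset attracting $B_0$, from which a diagonal-extraction argument forces $\mathcal{A}$ to be nonempty and compact. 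Invariance $S_t\mathcal{A}=\mathcal{A}$ follows from continuity of $S_t$ and the sequential characterization of $\omega$-limits (both inclusions, with surjectivity requiring the pre-compactness supplied by asymptotic smoothness). Attraction of arbitrary bounded $B$ by $\mathcal{A}$ reduces, via the absorbing property, to attraction of $B_0$ by $\omega(B_0)$, which is classical once compactness is in hand.

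For connectedness, I would enlarge $B_0$ to a closed ball $B_R$ (connected) which is still absorbing, and observe that each
\[
\mathcal{A}_t\;:=\;\overline{\bigcup_{s\geq t}S_sB_R}
\]
is a closed connected set (continuous $S_s$-images of the connected $B_R$ produce connected sets sharing the point $S_tB_R$, their union is connected, and so is its closure). Since $\mathcal{A}=\bigcap_{t\geq 0}\mathcal{A}_t$ is a decreasing intersection of connected compact sets (compactness inherited from $\mathcal{A}_t\subset\mathcal{V}(\mathcal{A})$ for $t$ large, by the third assumption and attraction), the nested-intersection theorem for compact connected sets yields connectedness of $\mathcal{A}$.

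The main obstacle is the backward implication, specifically extracting compactness of $\mathcal{A}$: asymptotic smoothness is a rather abstract compactness-at-infinity property and must be paired carefully with the bounded-trajectory condition to guarantee that $\omega(B_0)$ is both nonempty and compact, rather than merely a (possibly empty) intersection of closed sets. Once this compactness is established, invariance and attraction follow by standard dynamical-systems manipulations; connectedness is then a soft topological consequence, provided one works with a connected absorbing set.
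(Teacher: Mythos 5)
The paper does not prove this statement: it is quoted verbatim from Raugel's survey \cite[Theorem 2.26]{Rau02} and used as a black box, so there is no internal proof to compare against. Judged on its own terms, your outline follows the classical Hale--Raugel route (easy direction by checking the three properties; hard direction via an absorbing set $B_0$ and $\mathcal{A}=\omega(B_0)$), and the easy direction and the identification $\mathcal{A}=\omega(B_0)$ are fine. But there is a genuine gap at the very first step of the backward implication. You write that one can ``combine point dissipativity with the third condition to produce a bounded closed set $B_0$ that is absorbing'' for all bounded sets. That upgrade --- from \emph{point} dissipative to \emph{bounded} dissipative --- is precisely the hard content of the theorem and does not follow by merely combining the two hypotheses: point dissipativity only gives an entry time $t_0(x_0)$ for each individual point, with no uniformity over a bounded set, and in infinite dimensions a point-dissipative system with bounded orbits need not admit a bounded absorbing set without further compactness. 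The correct argument (Hale's Lemmas 3.4.1--3.4.7, or Raugel's proof) uses asymptotic smoothness essentially at this stage: one first builds a compact set attracting points, then uses continuity of the flow and a finite-cover argument to show it attracts compact sets, then neighborhoods of compact sets, and only then bounded sets via the bounded-orbit hypothesis. Your proposal invokes asymptotic smoothness only \emph{after} $B_0$ is in hand, so the step where it is most needed is missing.

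A second, smaller issue is the connectedness argument. The sets $\mathcal{A}_t=\overline{\bigcup_{s\geq t}S_sB_R}$ are closed, bounded and connected (connectedness needs the joint continuity of $(s,x)\mapsto S_sx$, since the individual images $S_sB_R$ need not pairwise intersect), but they are \emph{not} compact in an infinite-dimensional $X$; being contained in a neighborhood of the compact set $\mathcal{A}$ does not make them compact. Hence the nested-intersection theorem for compact connected sets does not apply directly. The standard repair: if $\mathcal{A}=K_1\sqcup K_2$ with $K_1,K_2$ nonempty, compact and separated by disjoint open sets $U_1, U_2$, then attraction forces $\mathcal{A}_t\subset U_1\cup U_2$ for $t$ large while $\mathcal{A}\subset\mathcal{A}_t$ forces $\mathcal{A}_t$ to meet both, contradicting connectedness of $\mathcal{A}_t$. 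With these two repairs your outline becomes the standard proof.
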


In practical situations, the previous configuration happens in the presence of some decreasing \emph{energy}.

\begin{definition}
A function $\Phi \in C^0(X, \mathbb{R})$ is a called a \emph{strict Lyapunov functional} for $(X, S_t)$ if
\begin{itemize}
  \item $\Phi(S_tx_0) \leq \Phi(x_0)$ for all $t \geq 0$ and $x_0 \in X$,
  \item $\Phi(S_tx_0) = \Phi(x_0)$ for all $t \in \mathbb{R}$ implies that $x_0$ is an equilibrium point.
\end{itemize}
The dynamical system $(X, S_t)$ is said to be \emph{gradient} if it has a strict Lyapunov functional and each bounded positive orbit is precompact.
\end{definition}

Although the definition of a gradient system is not uniform in the literature, all of them enjoy the following property as a consequence.

\begin{theorem}[Invariance principle of LaSalle]\label{app:thm:lasalle}
Let $(X, S_t)$ be a gradient system. Then, for any $x_0 \in X$, $\omega(x_0)$ is a set of equilibrium points.

If $x_0$ is so that the negative trajectory $\gamma_-(x_0)$ is relatively compact, then $\alpha(x_0)$ is a set of equilibrium points.
\end{theorem}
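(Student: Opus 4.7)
The plan is the classical invariance-principle argument: show each limit set is two-sided invariant under the semigroup, prove the Lyapunov functional is constant on it, and invoke the strict Lyapunov condition to identify every point as an equilibrium.

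For the first statement, assuming $\omega(x_0)$ is nonempty, I would fix $y\in\omega(x_0)$ and a sequence $t_n\to+\infty$ with $S_{t_n}x_0\to y$. Monotonicity and continuity of $\Phi$ yield that $\Phi(S_t x_0)$ is non-increasing in $t$ and converges along $(t_n)$ to $\Phi(y)$; thus it converges monotonically to a finite limit $\ell:=\Phi(y)$, and by continuity $\Phi\equiv\ell$ on $\omega(x_0)$. Continuity of $S_s$ for $s\geq 0$ applied to $S_{t_n}x_0\to y$ gives $S_{t_n+s}x_0\to S_s y$, hence positive invariance of $\omega(x_0)$.

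The crucial step is promoting positive to two-sided invariance. Since the system is gradient, $\gamma_+(x_0)$ is precompact (bounded, under the working assumption underlying the nonemptiness of $\omega(x_0)$ in the gradient setting), so $\overline{\gamma_+(x_0)}$ is compact. For each $s\geq 0$, the sequence $(S_{t_n-s}x_0)_n$ then admits a convergent subsequence with limit $y_{-s}\in\omega(x_0)$ and $S_s y_{-s}=y$ by continuity of $S_s$. A diagonal extraction produces a complete orbit $\{y(t)\}_{t\in\R}\subset\omega(x_0)$ with $y(0)=y$, along which $\Phi\equiv\ell$; the strict Lyapunov condition then forces $y$ to be an equilibrium.

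The $\alpha$-limit statement follows by the symmetric argument. The hypothesis that $\gamma_-(x_0)$ is relatively compact plays the role of the precompactness above: $\alpha(x_0)$ is nonempty, compact and invariant, the negative trajectory through $x_0$ is well-defined on $\R$, and $t\mapsto\Phi(S_{-t}x_0)$ is non-decreasing in $t\geq 0$ and bounded above by $\Phi(x_0)$, so it converges to a finite limit attained on $\alpha(x_0)$; the identical complete-orbit construction combined with the strict Lyapunov condition concludes. The main obstacle throughout is exactly this complete-orbit construction inside the limit set, since the strict Lyapunov condition is stated for $t\in\R$ while the semigroup $S_t$ only runs forward a priori; the precompactness of the relevant one-sided orbit (built into the gradient definition in the forward case, and assumed outright in the backward case) is what unlocks this step.
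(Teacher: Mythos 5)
The paper does not prove this statement: it is quoted as standard background from Raugel \cite{Rau02}, so there is no in-paper argument to compare against. Your proof is the classical invariance-principle argument and is essentially correct: constancy of $\Phi$ on the limit set, two-sided invariance obtained from precompactness of the relevant one-sided orbit plus a diagonal extraction of a complete orbit through each limit point, and then the strict Lyapunov condition. You also correctly isolate the one genuinely delicate point, namely that the paper's definition of a strict Lyapunov functional requires $\Phi(S_t y)=\Phi(y)$ for all $t\in\mathbb{R}$, so a negative continuation of the orbit inside the limit set must be manufactured rather than assumed. Two small corrections. First, in the $\alpha$-limit case the map $t\mapsto\Phi(S_{-t}x_0)$ is non-decreasing with value $\Phi(x_0)$ at $t=0$, so $\Phi(x_0)$ is a \emph{lower} bound, not an upper bound as you wrote; the upper bound (hence convergence of this map) comes from continuity of $\Phi$ on the compact set $\overline{\gamma_-(x_0)}$. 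Second, the first assertion tacitly requires $\gamma_+(x_0)$ to be bounded (hence precompact, by the paper's definition of a gradient system); you flag this, and it is consistent with how the result is used in the paper, where positive orbits of bounded sets are shown to be bounded, the statement being read as vacuous when $\omega(x_0)=\emptyset$.
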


Concerning compact attractors, \cref{app:thm:attractorcharac} simplifies to the following in the context of gradient system.

\begin{theorem}\cite[Theorem 4.6]{Rau02}\label{app:thm:attractor}
Let $(X, S_t)$ be a gradient system, so that
\begin{itemize}
  \item $(X, S_t)$ is asymptotically smooth,
  \item for any bounded set $B \subset X$, there exists $t_0 \geq 0$ such that $\bigcup_{t \geq t_0} S_tB$ is bounded,
  \item the set of equilibrium points $\mathcal{E}$ is bounded.
\end{itemize}
Then $(X, S_t)$ has a compact global attractor $\mathcal{A}$. Moreover, $\mathcal{A}$ is connected.
\end{theorem}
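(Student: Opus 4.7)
The plan is to obtain this result as a corollary of the characterization of compact global attractors in \cref{app:thm:attractorcharac}. Two of its three hypotheses are already assumed here (asymptotic smoothness, and boundedness of the union of positive orbits of any bounded set), so the only nontrivial task is to establish that the gradient system $(X, S_t)$ is point dissipative.

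To exhibit a uniform bounded absorbing set, I would fix any bounded open set $B\subset X$ containing a neighborhood of the equilibrium set $\mc{E}$; this is possible because $\mc{E}$ is bounded by hypothesis. For an arbitrary $x_0 \in X$, the second hypothesis applied to the singleton $\{x_0\}$ ensures that the positive orbit $\gamma_+(x_0)$ is bounded, and the definition of a gradient system forces it to be precompact. Consequently, $\omega(x_0)$ is nonempty and compact, and LaSalle's invariance principle \cref{app:thm:lasalle} yields $\omega(x_0)\subset \mc{E}\subset B$. To upgrade this to the statement that $S_t x_0 \in B$ for all sufficiently large $t$, I would argue by contradiction: if there existed a sequence $t_n \to +\infty$ with $S_{t_n} x_0 \notin B$, then by precompactness of $\gamma_+(x_0)$ the sequence would admit a limit point $y \in \omega(x_0)\subset B$; but openness of $B$ then forces $S_{t_n} x_0 \in B$ eventually, a contradiction. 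This establishes point dissipativity with the absorbing set $B$ chosen independently of $x_0$.

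With the three hypotheses of \cref{app:thm:attractorcharac} in place, the existence of a compact global attractor $\mathcal{A}$ follows directly, and its connectedness is part of the same statement. No additional dynamical input is needed beyond the characterization.

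The main obstacle will be the subtle passage from $\omega(x_0)\subset B$ to the fact that the trajectory eventually enters and remains in $B$: this is not automatic, and relies crucially on combining the compactness of $\omega(x_0)$ with the openness of the neighborhood $B$ of $\mc{E}$. Once this point is handled carefully, the rest of the argument is a direct invocation of the previously established characterization, and no further technical hurdle is expected.
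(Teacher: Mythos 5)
Your proposal is correct: the paper states this theorem as a citation of \cite[Theorem 4.6]{Rau02} without reproducing a proof, and your derivation — reducing everything to point dissipativity via LaSalle, the precompactness of bounded positive orbits built into the definition of a gradient system, and the boundedness of $\mathcal{E}$, then invoking \cref{app:thm:attractorcharac} — is exactly the standard argument from that reference. The one step you flag as delicate (passing from $\omega(x_0)\subset B$ to the trajectory eventually remaining in $B$) is handled correctly by your compactness/openness contradiction, since it shows the set $\{t\geq 0 \,:\, S_t x_0\notin B\}$ cannot be unbounded.
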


\bibliographystyle{alpha}
\bibliography{bibliography.bib}

\end{document}